\newtheorem{thm}{Theorem}
\newtheorem{lem}[thm]{Lemma}
\newtheorem{prop}[thm]{Proposition}
\newtheorem*{conj*}{Conjecture}
\newtheorem{cor}[thm]{Corollary}
\newcommand{\N}{\mathbb N}
\newcommand{\Z}{\mathbb Z}
\newcommand{\C}{\mathbb C}
\newcommand{\PP}{\mathbb P}
\newcommand{\R}{\mathbb{R}}
\newcommand{\diag}{\mathcal D}
\newcommand{\diagpar}[1]{\mathcal D_{#1}}
\newcommand{\DELETE}[1]{}
\newcommand{\ep}{\epsilon}
\newcommand{\del}{\delta}
\newcommand{\om}{\omega}
\newcommand{\oom}{{\bar\omega}}
\newcommand{\Om}{\Omega}
\newcommand{\lam}{\lambda}
\newcommand{\sig}{\sigma}
\newcommand{\topspacemat}[2]{E_{#1}(#2)}
\newcommand{\topspaceDSunpert}[2]{E_{#1}(#2)}
\newcommand{\topspaceDSpert}[2]{E_{#1}^\epsilon(#2)}
\newcommand{\bottomspacemat}[2]{F_{#1}(#2)}
\newcommand{\bottomspaceDSunpert}[2]{F_{#1}(#2)}
\newcommand{\bottomspaceDSpert}[2]{F_{#1}^\epsilon(#2)}
\newcommand{\cocycle}[2]{A^{(#1)}_{#2}}
\newcommand{\pertA}{A^\epsilon}
\newcommand{\cocyclepss}[2]{{\pertA}_{#2}^{(#1)}}
\DeclareMathOperator{\Cov}{Cov}
\DeclareMathOperator{\vol}{Vol}
\DeclareMathOperator{\LPS}{\mathcal E^\text{L}}
\DeclareMathOperator{\RPS}{\mathcal E^\text{R}}
\DeclareMathOperator{\PS}{\mathcal E}
\DeclareMathOperator{\E}{\mathbb E}
\DeclareMathOperator{\real}{Re}
\DeclareMathOperator{\lin}{lin}
\newcommand{\HS}{\mathsf{HS}}
\newcommand{\SHS}{\mathsf{SHS}}
\newcommand{\bad}{\mathsf{Filler}}
\newcommand{\op}{\mathsf{op}}
\newcommand{\nbhd}{\mathcal U}
\newtheorem{thmx}{Theorem}
\begin{document}
\title[Hilbert Space Lyapunov Exponent stability]
{Hilbert Space Lyapunov Exponent stability}
\author{Gary Froyland}
\email{g.froyland@unsw.edu.au}
\author{Cecilia Gonz\'alez-Tokman}
\email{cecilia.gt@uq.edu.au}
\author{Anthony Quas}
\email{aquas@uvic.ca}
\date{\today}

\begin{abstract}
We study cocycles of compact operators acting on a separable Hilbert space,
and investigate the stability of the Lyapunov exponents and Oseledets
spaces when the operators are subjected to additive Gaussian noise.
We show that as the noise is shrunk to 0, the Lyapunov exponents of the
perturbed cocycle converge to those of the unperturbed cocycle; and the
Oseledets spaces converge in probability to those of the unperturbed
cocycle. This is, to our knowledge, the first result of this type with
cocycles taking values in operators on infinite-dimensional spaces. The
infinite dimensionality gives rise to a number of substantial difficulties
that are not present in the finite-dimensional case.
\end{abstract}

\maketitle

\section{Introduction}
A question of paramount importance in applied mathematics is:
How to tell if the conclusions derived from a model indeed capture
relevant features of an underlying  system? Stability results address
this question by giving conditions under which small changes in a
model entail small changes in the outcomes of the analysis.

In the last decade, multiplicative ergodic theory has been developed in
the so-called semi-invertible setting (that is the setting in which the
underlying base dynamics are assumed to be invertible, but no invertibility
assumptions are made on the matrices) \cite{FLQ1, FLQ2, GTQ1,GTQ2} with
the aim of providing a useful mathematical tool to analyse transport
features of complex real world systems, such as geophysical flows.
This approach has been implemented to find coherent structures in fluid flow 
\cite{FLS10}, and a finite-time version of this theory has been used to detect 
atmospheric vortices and oceanic eddies in geophysical flows 
\cite{FroylandSantiMonahan,FHRSS12}.

However, from the mathematical perspective, the following  questions
remain completely unsolved:
\begin{itemize}
\item \textsl{Model or data errors:} Do these structures -- obtained using either models 
of geophysical flows or observational data, both of which contain errors -- correspond to 
\textit{real} features of the underlying flows?
\item \textsl{Numerical errors:} Are these structures robust to numerical errors in the 
numerical schemes applied to the models or observational data in order to extract the
ergodic-theoretic objects?
\end{itemize}

The aim of this work is to provide an initial step in establishing
conditions for the stability of Lyapunov exponents and so-called
Oseledets spaces, the essential components underlying multiplicative
ergodic theory, in an infinite-dimensional context.
The infinite dimensionality aspect is crucial to be able to eventually
encompass the setting of \textit{transfer operators} -- a powerful
mathematical tool used to model transport in dynamical systems.
In the infinite-dimensional context, and aside from works focusing exclusively on the i.i.d.~
perturbation (noise) setting, stability results have only been
established either (i) under uniform hyperbolicity assumptions on the underlying cocycle,
which for example cover the case of random perturbations of a fixed map
\cite{BaladiKondahSchmitt,Bogenschutz};
or (ii) for the top (first) component of the splitting, in the context of
transfer operators \cite{FGTQ-nonlinearity}, where the leading
Lyapunov exponent is always 0, corresponding to a random fixed point.

Early results concerning stability of Lyapunov exponents for
finite-dimensional (matrix) cocycles include
\cite{Ruelle79,Kifer82,KiferSlud82,Hennion84}.
In the setting of invertible matrix cocycles, the closest results
to this work are due to Ledrappier, Young and Ochs
\cite{Young86, LedrappierYoung, Ochs}.
The difficulty of the stability problem at hand, even in the
finite-dimensional setting, is highlighted by the existence of
negative stability results for Lyapunov exponents of matrix cocycles
\cite{Bochi,BochiViana}, which show that for non-uniformly hyperbolic cocycles,
carefully chosen arbitrarily small perturbations may collapse the
entire spectrum of Lyapunov exponents to a single exponent.
In this finite-dimensional setting, the stability problem
remains an active topic of research, and related recent
results include \cite{BockerViana,BlumenthalXueYoung}.
In the setting of semi-invertible matrix cocycles, the authors
established stability results under stochastic perturbations in
\cite{FGTQ-cpam, FGT}.

In this paper, we study cocycles taking values in compact operators on
a separable Hilbert space.
The unperturbed cocycle is assumed
to be strongly coercive, with exponentially-decaying transmission
between higher order modes, so that the leading Oseledets spaces
tend to be concentrated on low order modes.
This issue of the cocycle sending an arbitrarily high-order mode to a low-order 
mode does not arise in the finite-dimensional setting.
Additionally, unlike the finite-dimensional case, there is no natural Lebesgue-like 
measure on the infinite-dimensional space of perturbations.
Hence as our model of noise we use additive Gaussian perturbations.
The Gaussian nature of the perturbations allows for
unbounded changes, and is also convenient for calculations.
In order to maintain the noise as a \emph{small} perturbation, the Gaussian 
perturbations are required to have stronger exponential decay than the unperturbed
cocycle.
We regard the model as a natural generalisation of the finite-dimensional 
Ledrappier-Young setting to infinite dimensions.

The main results of the paper, Theorems~\ref{thm:exponents} and
\ref{thm:spaces}, yield, respectively, convergence of Lyapunov
exponents and Oseledets spaces of the randomly perturbed cocycles.
The method of proof of stability of Lyapunov exponents 
builds on the work of Ledrappier and Young
\cite{LedrappierYoung}, which dealt with Lyapunov exponents in
invertible matrix cocycles, as well as on our recent work
\cite{FGTQ-cpam}, which had to handle the complications arising from
non-invertibility of the matrices. The motivation for studying the case
of non-invertible matrices is that transfer operators are generally not invertible.
The strategies in all three papers, \cite{LedrappierYoung, FGTQ-cpam}
and this one, are similar in spirit:
The idea is to split long sequences of matrices observed along the
cocycle into \emph{good} and \emph{bad} blocks, depending on
whether or not the long term behaviour of the cocycle corresponds
to the observed behaviour within the block, and then handle
carefully the concatenations.
However, at the technical level,  there are substantial
complications arising from the need to handle \emph{wild}
perturbations occurring in possibly higher and higher modes.

As in the previous works \cite{Ochs,FGTQ-cpam}, the stability of Oseledets spaces 
is deduced from the stability of the Lyapunov exponents,
but the strategy of the proof here is different.
The approach of Ochs \cite{Ochs} applies only to invertible matrices, 
and the proof is essentially finite-dimensional. 
The core of the argument is:
if the perturbed slowest Oseledets spaces were often far from its unperturbed counterpart,
the contribution
to the bottom exponent of the perturbed system on this part of the 
base space would be at least $\lambda_{d-1}$. 
Hence, convergence of the exponents implies the perturbed and unperturbed 
slow spaces are mostly nearby. This is basically an expectation argument.
Subsequent Oseledets spaces are similarly controlled using exterior powers.
The approach of \cite{FGTQ-cpam} in the context of not necessarily invertible matrices
relies on the use of M\"obius transformations or graph transforms. 
The essence of the argument is one fixes all of the perturbations to the matrices other
than the perturbation at time $-1$. Since there is exponential contraction in a cone around the
unperturbed \emph{fast space} (that is the span of the $k$-dimensional Oseledets 
spaces with largest Lyapunov exponents), all but a very small set of perturbations 
at time $-1$ cause
the fast space to fall into the basin of attraction, and to end up near the unperturbed
fast space. 
While this approach would still apply in the infinite-dimensional case, 
the new argument of this paper
has the advantages that it is simpler and more general; in particular, it does not rely on any
special structure for the perturbations, such as absolute continuity, which played a role in 
\cite{FGTQ-cpam}. 
All that is required is that the perturbations are small with high probability.
The approach in the current paper goes as follows: if the 
perturbed $k$-dimensional fast space is not close to the  
unperturbed fast space at time $2N$ (where $N$ is the block size), 
then the minimum angle between the perturbed fast space
at time $N$ and the unperturbed slow space at time $N$ must be small. 
For this to happen, the
minimum angle between the perturbed fast space at time 0 and the unperturbed slow space at
time 0 has to be exponentially small. Whenever this happens, there is a
growth drop of the $k$-dimensional volume of order $\exp(-(\lambda_k-\lambda_{k+1})N)$
over this block. 
An expectation argument ensures that this must happen rarely because otherwise the perturbed
$\lambda_k$ would be much less than the unperturbed $\lambda_k$. 

\section{The model and principal results}\label{S:Model}
Throughout the paper $\sigma\colon\Omega\to\Omega$ is an invertible measurable
transformation, $\PP$ is an ergodic invariant measure, and $H$ is a separable 
Hilbert space with basis $e_1,e_2,\ldots$.

The Hilbert-Schmidt norm is $\|A\|_\HS^2=
\sum_{i,j}\langle Ae_i,e_j\rangle^2$.
Define a stronger norm: $\|A\|_\SHS^2=
\sum_{i,j}2^{2(i+j)}\langle Ae_i,e_j\rangle^2$.
We frequently think of operators with bounded HS norm as
infinite matrices where the entries are square-summable.
We write $\HS$ for the collection of Hilbert-Schmidt operators on $H$ (those
operators, $A$, satisfying $\|A\|_\HS<\infty$),
and $\SHS$ for the collection of strong Hilbert-Schmidt operators (those operators
satisfying $\|A\|_\SHS<\infty$).

We write $\cocycle n\om$ for the unperturbed cocycle: $\cocycle n\om=
A_{\sigma^{n-1}\omega}\cdots A_\omega$, and call $A\colon\Omega\to\SHS$
the \emph{generator} of the operator cocycle.
Throughout the article, $\Delta$ will denote the random Hilbert-Schmidt operator
with independent normal entries with mean 0 and where the $(i,j)$ entry has
standard deviation $3^{-(i+j)}$. Write $\gamma$ for the measure on $\SHS$
corresponding to this distribution.
We apply a sequence of independent perturbations
$\mathbf\Delta=(\Delta_n)_{n\in\Z}$, where each $\Delta_n$ has the distribution above. 
For $\om$ lying in the base space, we denote by $\oom$ the pair
$(\om,\mathbf\Delta)$ specifying the point of the base space and the
sequence of perturbations. The space of such pairs is denoted by
$\bar\Omega$, and is equipped with the transformation $\bar\sigma=\sigma\times s$,
where $s$ is the left shift on the sequence of perturbations and the 
ergodic invariant measure $\bar\PP=\PP\times\gamma^\Z$.
The perturbed cocycle is parameterized
by $\epsilon$ (a measure of the size of the perturbation) and defined by
$$
\cocyclepss n\oom=(A_{\sigma^{n-1}\om}+\epsilon\Delta_{n-1})\cdots
(A_\om+\epsilon\Delta_0).
$$

\begin{thmx}\label{thm:exponents}
Let $\sigma\colon\Omega\to\Omega$ be an invertible measurable transformation
and let $\PP$ be an ergodic invariant measure for $\sigma$.
Let $H$ be a separable Hilbert space and let $A\colon\Omega\to\SHS$
be the generator of an operator cocycle satisfying $\int\log\|A_\omega\|_\SHS\,
d\PP(\omega)<\infty$.

Let $\bar\Omega$, $\bar\sigma$ and $\bar\PP$ be as defined above.
For each parameter $\epsilon>0$, define a new cocycle $\pertA\colon\bar\Omega
\to\SHS$ over $\bar\sigma$ 
with generator $\pertA(\oom)=A(\omega)+\epsilon\Delta_0$.
Then the Lyapunov exponents of $\pertA$ (listed with multiplicity)
converge to those of $A$
as $\epsilon\to 0$.
\end{thmx}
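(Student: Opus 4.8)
The plan is to prove the two one-sided inequalities separately: upper semicontinuity of the Lyapunov exponents (lim sup of perturbed exponents $\le$ unperturbed), and lower semicontinuity (lim inf $\ge$ unperturbed), with the understanding that ``exponents'' means the nonincreasing sequences $\mu_1^\epsilon\ge\mu_2^\epsilon\ge\cdots$ compared termwise, or equivalently the partial sums $\Lambda_k^\epsilon=\mu_1^\epsilon+\cdots+\mu_k^\epsilon$, which by the subadditive ergodic theorem are given by $\Lambda_k^\epsilon=\lim_n\frac1n\log\|\Lambda^k\cocyclepss n\oom\|$ (exterior power / singular value products). Working with the $\Lambda_k$ is convenient since each is a genuine (sub)additive cocycle and convergence of all $\Lambda_k$ is equivalent to convergence of all $\mu_k$.

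\textbf{Upper bound.} First I would establish $\limsup_{\epsilon\to0}\Lambda_k^\epsilon\le\Lambda_k^0$ for each $k$. The key point is a quantitative continuity estimate: because $\Delta_0\in\HS$ a.s.\ with $\E\|\Delta_0\|_\HS<\infty$, and in fact $\Delta_0\in\SHS$-adjacent in the sense that the required stronger decay makes $\epsilon\Delta_0$ a genuinely small $\SHS$-perturbation, one has $\|A^\epsilon(\oom)\|_\SHS\le\|A(\omega)\|_\SHS+\epsilon\|\Delta_0\|_\SHS$ and, more importantly, control of singular values of products. I would fix $k$ and a horizon $N$, and estimate $\frac1N\E\log\|\Lambda^k\cocyclepss N\oom\|$ by expanding the product $(A_{\sigma^{N-1}\om}+\epsilon\Delta_{N-1})\cdots(A_\om+\epsilon\Delta_0)$ into $2^N$ terms; the ``main'' term is $\Lambda^k\cocycle N\om$ and every other term carries at least one factor of $\epsilon\Delta_j$. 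Using the moment bounds on $\|\Delta_j\|_\HS$ (Gaussian tails, independence) together with the integrability hypothesis $\int\log\|A_\omega\|_\SHS\,d\PP<\infty$, the expectation of the log of the sum is within $o(1)$ as $\epsilon\to0$ of $\frac1N\E\log\|\Lambda^k\cocycle N\om\|$, uniformly enough to pass $N\to\infty$ afterwards. This gives $\limsup_{\epsilon\to0}\Lambda_k^\epsilon\le\frac1N\E\log\|\Lambda^k\cocycle N\om\|$ for every $N$, hence $\le\Lambda_k^0$. One must be slightly careful that $\log\|\Lambda^k\cocyclepss N\oom\|$ is dominated by an integrable function uniformly in small $\epsilon$ so that Fatou/dominated convergence applies; the $\SHS$-integrability and Gaussian moment bounds supply this.

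\textbf{Lower bound.} This is the substantive direction and I expect it to be the main obstacle. Here one cannot argue termwise by a crude perturbation estimate, because singular values can collapse under perturbation; instead one follows the Ledrappier--Young / \cite{FGTQ-cpam} block strategy. Fix $k$ and $\delta>0$. For the \emph{unperturbed} cocycle, pick a large block length $N$ so that on a set of $\omega$ of measure close to $1$ the $k$-dimensional growth $\log\|\Lambda^k\cocycle N\om\|$ is within $N\delta$ of $N\Lambda_k^0$, and moreover the corresponding top $k$-dimensional Oseledets subspace has good geometry (minimum angle with the complementary slow space bounded below). One then decomposes a long orbit of the perturbed cocycle into consecutive $N$-blocks, classifies each block as \emph{good} (the perturbations $\epsilon\Delta_j$ in that block are all of size $\le\epsilon^{1/2}$, say, so $\cocyclepss N{\bar\sigma^{mN}\oom}$ is within a controlled distance of $\cocycle N{\sigma^{mN}\om}$ in $\HS$ and the $k$-dimensional growth is $\ge N(\Lambda_k^0-2\delta)$) or \emph{bad}. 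By the Gaussian tails and Borel--Cantelli, bad blocks have density going to $0$ as $\epsilon\to0$; on bad blocks one uses only the trivial lower bound, which by $\SHS$-integrability costs at most $o(N)$ per bad block on average. The concatenation of good blocks has to be handled with the cone/minimum-angle argument sketched in the introduction: along a run of good blocks the perturbed fast $k$-space stays in a cone around the unperturbed fast space, so the $k$-dimensional volumes essentially multiply, and the occasional bad block only interposes a bounded (integrable) multiplicative defect and a bounded angle reset. Averaging with the subadditive ergodic theorem then yields $\Lambda_k^\epsilon\ge\Lambda_k^0-C\delta-o(1)$ as $\epsilon\to0$. Letting $\delta\to0$ completes the lower bound, and combined with the upper bound gives convergence of every $\Lambda_k$, hence of every Lyapunov exponent listed with multiplicity.

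\textbf{Where the difficulty lies.} The genuinely infinite-dimensional obstruction, absent in \cite{LedrappierYoung,FGTQ-cpam}, is that a Gaussian perturbation $\epsilon\Delta_j$ can be ``wild'' in the sense of having nonnegligible mass in arbitrarily high modes, so even a small-$\HS$-norm perturbation can in principle move high-order modes into low-order ones and corrupt the leading Oseledets spaces. Controlling this requires exploiting the $\SHS$ (weighted, $2^{2(i+j)}$) norm on the unperturbed cocycle against the stronger $3^{-(i+j)}$ decay of the noise: the noise lives in an even more sharply decaying class, so a high-mode tail of $\Delta_j$ is exponentially negligible and the ``good block'' event can be arranged to also control the tail of the perturbation, not just its $\HS$-norm. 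Making the good/bad dichotomy compatible with \emph{both} the geometric cone argument \emph{and} the mode-tail control — so that the cone around the unperturbed fast space is genuinely invariant under good perturbations — is the technical heart of the argument, and is presumably where the bulk of the paper's work goes.
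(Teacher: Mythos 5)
Your upper-semicontinuity argument is essentially the paper's (fix $n$, use continuity of $\Xi_k$ under small operator-norm perturbations of the $n$-fold product, dominate, and apply the reverse Fatou lemma), and your lower-bound skeleton --- block decomposition, good/bad dichotomy, cone argument along runs of good blocks --- matches the paper's. A minor point: lower semicontinuity is only needed, and only provable, at indices $k$ with $\mu_k>\mu_{k+1}$; the Ledrappier--Young reduction handles the rest.

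The genuine gap is in your disposal of the bad blocks: you assert that one ``uses only the trivial lower bound'' there and that a bad block ``only interposes a bounded (integrable) multiplicative defect and a bounded angle reset.'' No such trivial bound exists, because $\Xi_k$ is sub-additive rather than super-additive: for a concatenation $G_2BG_1$ with $B$ bad there is no a priori lower bound on $\Xi_k(G_2BG_1)$ in terms of the factors --- the bad block can rotate the image of $G_1$ arbitrarily close to the slow space or kernel directions of $G_2$, collapsing the $k$-volume by an uncontrolled amount, and after a bad block the perturbed fast space need not re-enter the contracting cone. (Ledrappier--Young could glue blocks using invertibility; here the operators are compact and non-invertible.) This reassembly problem is what most of the paper is devoted to. The resolution is the surrogate quantity $\tilde\Xi_k(A)=\E\,\Xi_k(\Pi_k\Delta A\Delta'\Pi_k)$, which satisfies the approximate super-additivity $\E\,\tilde\Xi_k(L(A+\ep\Delta)R)\ge\tilde\Xi_k(L)+\tilde\Xi_k(R)-k|\log\ep|-C$ (Lemma~\ref{lem:join}, Proposition~\ref{prop:splitting}): the Gaussian perturbation at each junction acts as glue, putting the image of the left factor in general position relative to the right factor. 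The proof runs the decomposition at the level of $\tilde\Xi_k$, compares $\tilde\Xi_k$ back to $\Xi_k$ via the energy estimates of Section~\ref{sec:Comparison}, and treats wild perturbations separately (Propositions~\ref{prop:step2tilde} and~\ref{prop:badtriangleineq}), the per-block loss $O(|\log\ep|)$ being absorbed because wild blocks occur with frequency $O(e^{-1/(2\ep)})$. Your ``mode-tail'' concern is real but secondary; without a super-additivity mechanism of this kind, the gluing step of your lower bound does not go through.
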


\begin{thmx}\label{thm:spaces}
Assume the hypotheses and notation of Theorem~\ref{thm:exponents}.
Let the (at most countably many) distinct Lyapunov exponents of the  cocycle $A$ be
$\lambda_1>\lam_2> \ldots >-\infty$, with corresponding multiplicities
$d_1,d_2,\ldots$.  Let the corresponding Oseledets decomposition be
$\SHS=Y_1(\omega) \oplus Y_2(\omega) \oplus\ldots$.
Let $D_0=0$, $D_i=d_1+\ldots+d_i$ and let the Lyapunov exponents (with multiplicity)
be $\infty>\mu_1\ge \mu_2\ge\ldots > -\infty$, so that $\mu_j=\lambda_i$
if $D_{i-1}<j\le D_i$.

Let $\nbhd_i= (\lambda_i-\alpha, \lambda_i+\alpha)$ be a neighbourhood of
$\lambda_i$ not containing any other exponent of the unperturbed cocycle.
Let $\epsilon_0$ be such
that for each $\epsilon\le \epsilon_0$ and each $D_{i-1}<j\le D_i$, the
$j^{\text{th}}$ Lyapunov exponent $\mu_j^\ep$ of the perturbed cocycle satisfies
$\mu_j^\ep\in \nbhd_i$.
For $\epsilon<\epsilon_0$, let $Y^\ep_i(\bar\omega)$ denote the sum of the Oseledets
subspaces of $\pertA$ having exponents in $\nbhd_i$.
Then $Y^\ep_i(\bar \omega)$ converges in probability to $Y_i(\omega)$ as $\ep\to 0$.
\end{thmx}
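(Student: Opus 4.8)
The plan is to reduce Theorem~\ref{thm:spaces} to a single statement about \emph{fast spaces} and then bootstrap. Write $F_k(\om)=Y_1(\om)\oplus\cdots\oplus Y_i(\om)$ for the unperturbed fast space of dimension $k=D_i$, let $V_k(\om)$ be the complementary closed equivariant slow space (the closure of $Y_{i+1}(\om)\oplus Y_{i+2}(\om)\oplus\cdots$), and write $F_k^\ep(\oom)$ for the span of the perturbed Oseledets spaces carrying the $k$ largest exponents. I would first isolate a \emph{Fast Space Stability Lemma}: whenever a generator over an ergodic base enjoys (i) the standard multiplicative good-block estimates along blocks of length $N$ for a set of $\om$ of measure $>1-\del$ --- control of $\|\wedge^k\cocycle N\om\|$, of the restriction to $V_k(\om)$ via $\|\cocycle N\om|_{V_k(\om)}\|\le e^{(\mu_{k+1}+\del)N}$ and to $F_k(\om)$ from below, a lower bound on the angle between $F_k(\om)$ and $V_k(\om)$, and approximation of the singular-value filtration of $\cocycle N\om$ by the Oseledets filtration with the correct exponential gaps --- and (ii) $\cocyclepss m\oom$ lies within any prescribed distance of $\cocycle m\om$ in operator norm with $\bar\PP$-probability close to $1$, for each fixed $m$ and all sufficiently small $\ep$, and (iii) $\mu_1^\ep+\cdots+\mu_k^\ep\to\mu_1+\cdots+\mu_k$, then $F_k^\ep\to F_k$ in $\bar\PP$-probability (in any fixed metric on $k$-dimensional subspaces). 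Here (iii) is Theorem~\ref{thm:exponents}, (i) is proved by the same multiplicative-ergodic estimates underlying Theorem~\ref{thm:exponents}, and (ii) is the control on the Gaussian perturbation over a fixed time window already established in the course of proving Theorem~\ref{thm:exponents}.

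To prove the lemma I would follow the two-block scheme indicated in the introduction: fix $\del>0$, then $N$ large, then $\ep$ small, and work on an event $\mathcal G$ of $\bar\PP$-measure $>1-C\del$ on which $\om$ and $\sigma^N\om$ have the good-block properties over blocks of length $N$ and $\cocyclepss{2N}\oom$ is operator-norm close to $\cocycle{2N}\om$. \emph{Claim A}: if the minimal angle between $F_k^\ep(\bar\sigma^N\oom)$ and $V_k(\sigma^N\om)$ exceeds a fixed constant, then pushing this transverse $k$-plane forward over the good block $[N,2N]$ --- which by equivariance produces $F_k^\ep(\bar\sigma^{2N}\oom)$, and over which $\cocyclepss N{\bar\sigma^N\oom}\approx\cocycle N{\sigma^N\om}$ --- places $F_k^\ep(\bar\sigma^{2N}\oom)$ within $e^{-(\mu_k-\mu_{k+1})N+O(\del)N}$ of $F_k(\sigma^{2N}\om)$; running the same estimate over $[0,N]$ shows conversely that, on $\mathcal G$, if that minimal angle at time $N$ is below the constant then the minimal angle $\theta$ between $F_k^\ep(\oom)$ and $V_k(\om)$ is already exponentially small, $\theta\le e^{-(\mu_k-\mu_{k+1})N+O(\del)N}$. \emph{Claim B}: when $\theta$ is this small, the singular-value/volume bound $\vol_k(\cocycle N\om W)\lesssim s_1\cdots s_k\,\theta+s_1\cdots s_{k-1}s_{k+1}$ for a $k$-plane $W$ at minimal angle $\theta$ from the bottom singular directions (comparable, on a good block, to the minimal angle from $V_k(\om)$), together with $s_1\cdots s_k\le e^{(\mu_1+\cdots+\mu_k+\del)N}$ and $s_{k+1}/s_k\le e^{-(\mu_k-\mu_{k+1}-\del)N}$, and then (ii) to pass from $\cocycle N\om$ to $\cocyclepss N\oom$, yields a growth drop $\log\vol_k(\cocyclepss N\oom|_{F_k^\ep(\oom)})\le(\mu_1+\cdots+\mu_k-\del')N$ with $\del'$ a fixed positive multiple of $\mu_k-\mu_{k+1}=\lambda_i-\lambda_{i+1}$. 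Using $\bar\sigma$-invariance to replace $\{d(F_k^\ep(\oom),F_k(\om))>\eta\}$ by its time-$2N$ translate and chaining Claim~A (at time $N$) with Claim~B, this set is, off $\mathcal G$, contained in the volume-drop event $\mathrm{VD}=\{\log\vol_k(\cocyclepss N\oom|_{F_k^\ep(\oom)})\le(\mu_1+\cdots+\mu_k-\del')N\}$.

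It remains to bound $\bar\PP(\mathrm{VD})$, which is the expectation argument. Since $\phi_N(\oom):=\log\vol_k(\cocyclepss N\oom|_{F_k^\ep(\oom)})$ is an additive cocycle over $\bar\sigma$ with $\phi_1\in L^1(\bar\PP)$ (the top $k$ perturbed exponents being finite), Birkhoff's theorem gives $\tfrac1N\int\phi_N\,d\bar\PP=\mu_1^\ep+\cdots+\mu_k^\ep$ for every $N$. Splitting this integral over $\mathrm{VD}$, over a further good set on which $\tfrac1N\phi_N\le\mu_1+\cdots+\mu_k+\del$, and over the complement --- on which $\tfrac1N\phi_N$ is bounded above by the Birkhoff average of $k\log^+\|\pertA_{\oom}\|_\op$, a family uniformly integrable in $N$ and $\ep$ --- gives
\[
\mu_1^\ep+\cdots+\mu_k^\ep\le(\mu_1+\cdots+\mu_k-\del')\,\bar\PP(\mathrm{VD})+(\mu_1+\cdots+\mu_k+\del)\bigl(1-\bar\PP(\mathrm{VD})\bigr)+\rho(\del),
\]
with $\rho(\del)\to0$. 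Rearranging, $\del'\,\bar\PP(\mathrm{VD})\le\bigl((\mu_1+\cdots+\mu_k)-(\mu_1^\ep+\cdots+\mu_k^\ep)\bigr)+\del+\rho(\del)$; by (iii) the first bracket is $<\del$ for $\ep$ small, so, $\del'$ being a fixed positive constant, $\bar\PP(\mathrm{VD})\le(2\del+\rho(\del))/\del'$. Hence $\bar\PP(d(F_k^\ep,F_k)>\eta)\le\bar\PP(\mathrm{VD})+\bar\PP(\mathcal G^c)$ is smaller than any prescribed $\eta$ once $\del$ is small, $N$ large and $\ep$ small, proving the lemma.

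Finally, to deduce Theorem~\ref{thm:spaces}, apply the lemma with $k=D_i$ and $k=D_{i-1}$ to get $F_{D_i}^\ep\to F_{D_i}$ and $F_{D_{i-1}}^\ep\to F_{D_{i-1}}$; then restrict $\pertA$ to the $D_i$-dimensional equivariant bundle $F_{D_i}^\ep$, which (since vectors there have exponent $\ge\mu_{D_i}^\ep>-\infty$) is an a.e.-invertible matrix cocycle, and trivialise it by a measurable orthonormal frame. In these coordinates $Y_i^\ep$ is exactly the $d_i$-dimensional fast space of the \emph{inverse} of this restricted cocycle over $\bar\sigma^{-1}$ (its top $d_i$ exponents are $-\mu_{D_{i-1}+1}^\ep,\ldots,-\mu_{D_i}^\ep$, i.e.\ the ones whose negatives lie in $\nbhd_i$, with a gap to $-\mu_{D_{i-1}}^\ep$); this inverse cocycle again satisfies (i)--(iii) --- finite-dimensionality makes (i) elementary, (ii) is inherited from the convergence of $\pertA$ and of $F_{D_i}^\ep$ together with the a.e.\ invertibility, and (iii) follows from Theorem~\ref{thm:exponents} --- so a second application of the Fast Space Stability Lemma gives $Y_i^\ep\to Y_i$ in probability. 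I expect the main obstacle to be verifying hypotheses (i) and (ii) in the genuinely infinite-dimensional setting: controlling the Gaussian perturbation, which can inject mass into arbitrarily high modes, tightly enough over a fixed time window that Claims~A and B survive; and establishing that along a good block the singular-value filtration of $\cocycle N\om$ approximates the full Oseledets filtration --- in particular $\|\cocycle N\om|_{V_k(\om)}\|\le e^{(\mu_{k+1}+\del)N}$ with $V_k(\om)$ infinite-dimensional --- while guaranteeing that the perturbed fast space $F_k^\ep(\oom)\subseteq\SHS$ still behaves like a bona fide $k$-dimensional object.
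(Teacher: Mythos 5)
Your two-block scheme and the closing expectation argument are exactly the paper's proof of the key reduction: your Claim~A is Lemmas~\ref{lem:bad1} and \ref{lem:smallPerp} (transversality of the perturbed fast space to the unperturbed slow directions at time $N$ forces closeness at time $2N$; failure of that transversality at time $N$ forces exponentially small transversality at time $0$), your Claim~B is Lemma~\ref{lem:upperBdXikGood}, and your Birkhoff/uniform-integrability bookkeeping is Lemma~\ref{lem:upperBdXik} together with the final computation bounding $\mu_k\bar\PP(W_\ep)$ by $(\mu_1+\cdots+\mu_k)-(\mu_1^\ep+\cdots+\mu_k^\ep)+4k\tau$. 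The only genuine divergence is the bootstrap from fast spaces to the individual spaces $Y_i^\ep$. The paper writes $Y_i^\ep(\oom)=\topspaceDSpert{D_i}{\oom}\cap \bottomspaceDSpert{D_{i-1}}{\oom}$ and gets convergence of $\bottomspaceDSpert{D_{i-1}}{\oom}$ as the orthogonal complement of the top space of the adjoint cocycle $(\pertA)^*$, which is again a cocycle of exactly the form already treated (the Gaussian perturbation is invariant in distribution under taking adjoints), so nothing new needs checking. Your route --- restrict to the equivariant bundle $\topspaceDSpert{D_i}{\oom}$, trivialise, and apply the stability lemma to the inverse cocycle over $\bar\sigma^{-1}$ --- should also work and has the merit that its second application is purely finite-dimensional, but it obliges you to verify things the adjoint route gets for free: integrability of the logarithm of the inverse of the restricted generator (needed to invoke the MET for the inverse cocycle; it follows from finiteness of $\mu_{D_i}^\ep$ together with integrability of the log-determinant of the restricted generator, but this must be said), measurable orthonormal trivialisations of the $\ep$-dependent bundles that converge along with the bundles, and the fact that the induced perturbation of the inverse restricted cocycle is no longer of additive Gaussian form, so your lemma really must be stated abstractly with only hypothesis (ii) --- which, to your credit, it is.
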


For $\lambda>1$, we let $\diagpar\lambda$ be the diagonal matrix whose
$(i,i)$ entry is $\lambda^{-i}$.  Formally we can write the random operator 
$\Delta$ from Theorem \ref{thm:exponents} as
$\diagpar3N\diagpar3$, where $N$ is a countably infinite square matrix of
independent standard normal random variables.

Throughout the remainder of the paper there will be numerous constants. We will mostly
just use the symbol $C$ to indicate a constant, where $C$ may refer
to different constants
at different places, even in the same proof. That is, whenever we write $C$,
we refer to a
quantity that may depend on $k$ (the number of exponents that we aim to control), and
on the underlying dynamical system, but not on $\epsilon$,
the size of the perturbations.
The exception to this will be some of the principal propositions where estimates are
collected for assembly in Section \ref{sec:puttingitalltogether}. In these propositions,
constants will be numbered according to the proposition in which they are found, so that
$C_{\ref{lem:trivial}}$, for example, is defined in Lemma \ref{lem:trivial}.

We briefly describe the structure of the proof of Theorem \ref{thm:exponents}
since there is considerable preparation before we start the proof.
The bulk of the proof is concerned with giving a lower bound for the
sum of the $k$ leading perturbed exponents, that is the maximal logarithmic growth
rate of $k$-volumes.
Given $\epsilon$, one defines a block length, $N\sim|\log\ep|$. For a large $n$, we
estimate the top exponents of the product $\cocyclepss{nN}\oom$, a perturbed block of
length $nN$. First, we replace the (sub-additive) logarithmic $k$-volume growth,
$\Xi_k(\cdot)$ by a
related approximately super-additive quantity, $\tilde\Xi_k(\cdot)$
(Sections \ref{sec:splitting} and \ref{sec:Comparison}). We use this
super-additivity to split $\cocyclepss{nN}\oom$
into good super-blocks (of length a multiple of $N$) and bad blocks (of length $N-2$),
that is $\Xi_k(\cocyclepss{nN}\oom)\gtrsim \tilde\Xi_k(\cocyclepss{nN}\oom)
\gtrsim \sum\tilde \Xi_k(\text{blocks})$. In section \ref{sec:goodBlocks}, ingredients
for the
estimate $\Xi_k(G^\ep)\gtrsim \Xi_k(G)$ are established,
where $G$ represents a good super-block and $G^\ep$ its perturbed version.
In sections \ref{sec:badPertI} and \ref{sec:badPertII},
ingredients for $\tilde \Xi_k(B^\ep) \gtrsim \tilde \Xi_k(B)$
are established (where $B$ is a bad block and $B^\ep$ is its
perturbed version). The estimates $\tilde\Xi_k(B)\gtrsim\Xi_k(B)$
and $\tilde\Xi_k(G^\ep)\gtrsim\Xi_k(G^\ep)$ are based on ingredients in
Section \ref{sec:Comparison}. Re-assembling the pieces using
sub-additivity of $\Xi_k$ and accounting for the errors gives the
result.

\section{Notation and the quantity $\tilde\Xi_k$}\label{S:Notation}

Recall that the Grassmannian of a Banach space is the space of closed
complemented subspaces. In a Hilbert space, every closed subspace is
complemented (by its orthogonal complement). We define $\mathcal G_k(H)$
to be the space of (necessarily closed) $k$-dimensional subspaces of $H$ and
$\mathcal G^k(H)$ to be the space of closed $k$-codimensional subspaces of $H$.
The collection of all closed subspaces of $H$ will be written $\mathcal G(H)$.
We will reserve the symbol $S$ for the unit sphere of $H$ throughout the article.

We define a metric on $\mathcal G(H)$ by
$$
\angle(U,V)=\max\left(\max_{u\in U\cap S}\min_{v\in V\cap S}\|u-v\|,
\max_{v\in V\cap S}\min_{u\in U\cap S}\|u-v\|\right),
$$
that is the Hausdorff distance between the intersections of the two subspaces
with the unit sphere. We remark that this differs by at most a bounded factor from 
another metric, the `gap' between closed subspaces defined in 
Kato \cite{Kato}. This is a complete metric on $\mathcal G(H)$.

We also make use of a measure of transversality between two subspaces of
complementary dimensions:
if $U\in \mathcal G^k(H)$ and $V\in \mathcal G_k(H)$,
then
$$
{\perp}(U,V)=\frac 1{\sqrt 2}\min_{u\in U\cap S,v\in V\cap S}\|u-v\|.
$$
The normalization is chosen so that if $U$ and $V$ have a common vector, then ${\perp}(U,V)=0$,
while if they are orthogonal complements, then ${\perp}(U,V)=1$.
We have the reverse triangle inequality: if $U',U\in \mathcal G^k(H)$, $V,V'\in\mathcal G_k(H)$,
then ${\perp}(U',V')\ge {\perp}(U,V)-\angle(V,V')-\angle(U,U')$.

We already introduced the classes of linear operators $\HS$ and $\SHS$ on $H$
with their associated norms, so that we have $\SHS\subset \HS\subset K(H)$,
where $K(H)$ stands for the compact linear operators on $H$. We write
$\|\cdot\|_\op$ for the operator norm, so that $\|\cdot\|_\SHS\ge \|\cdot\|_\HS$
for elements of $\SHS$ and $\|\cdot\|_\HS\ge\|\cdot\|_\op$ for elements of $\HS$.

For compact operators on $H$, the notions of singular vectors and singular values
pass directly from the finite-dimensional case. If $A\in K(H)$, we write
$s_1(A)\ge s_2(A)\ge\ldots$ for the singular values (with multiplicity in
decreasing order). The maximal logarithmic rate of $k$-dimensional volume growth is given by
$\Xi_k(A):=\log(s_1(A)\cdots s_k(A))$.

Define
$$
\tilde\Xi_k(A)=\E\Xi_k(\Pi_k\Delta A\Delta'\Pi_k),
$$
where $\Pi_k$ denotes orthogonal projection onto the
subspace of $H$ spanned by $e_1,\ldots,e_k$ and $\Delta$ and $\Delta'$
are independent copies of the random Hilbert-Schmidt operator. The key reason
for the introduction of $\tilde\Xi_k$ is that it satisfies an approximate
\emph{super-additivity} property (see Proposition \ref{prop:splitting})
that complements the sub-additivity of $\Xi_k$.

We denote by $\bar\Omega$, the space $\Omega\times \SHS^\Z$ and act on $\bar\Omega$
with the transformation $\sigma\times s$, where $s$ is the left-shift map on
$\SHS^\Z$. The space $\bar\Omega$ is equipped with the measure $\PP\times \gamma^\Z$,
where $\gamma$ is the multi-variate normal distribution on $\SHS$ described above
in which distinct elements of $\Delta$ are independent and the $(i,j)$ element
is normal with mean 0 and variance $3^{-2(i+j)}$.
We write $\bar\omega$ for a typical element of $\bar\Omega$,
that is a pair $(\omega,\mathbf\Delta)$, where $\mathbf \Delta=(\Delta_n)_{n\in\Z}$.

Informally, we expect an inequality like $\tilde\Xi_k(A)\ge
\Xi_k(A)-\LPS_k(A)-\RPS_k(A)$.
By $\LPS_k(A)$ (which stands for `left energy'), we
mean a measure of the modes on which the top $k$ left singular
vectors are distributed, while $\RPS_k(A)$ measures the modes
where the right singular vectors are supported. For example,
if the top left singular vectors are $e_7$, $e_8$, $e_{11}$ and $e_{13}$,
we expect $\LPS_4(A)$ to be approximately 39\,$\log 3$.

\begin{lem}\label{lem:simdiag}
Let $V$ be a $k$-dimensional subspace of $H$.
Let $D$ be a bounded operator on $H$.
There exists an orthonormal basis $v_1,\ldots,v_k$
for $V$ with the property that $Dv_1,\ldots,Dv_k$ are
mutually orthogonal.
\end{lem}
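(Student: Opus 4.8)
The plan is to reduce the statement to a finite-dimensional linear-algebra fact by restricting attention to a suitable finite-dimensional invariant-free ambient space, and then invoke a singular-value / diagonalization argument. Concretely, let $W = V + D(V)$, which is a subspace of $H$ of dimension at most $2k$. The operator $D$ maps $V$ into $W$, so the question is entirely about the behaviour of the linear map $D|_V \colon V \to W$ between finite-dimensional inner product spaces. Thus it suffices to prove: given finite-dimensional inner product spaces $V$ and $W$ and a linear map $T \colon V \to W$, there is an orthonormal basis $v_1,\dots,v_k$ of $V$ with $Tv_1,\dots,Tv_k$ mutually orthogonal in $W$.

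For this I would use the singular value decomposition of $T$. Writing $T = \sum_i s_i \langle \cdot, v_i\rangle w_i$ with $\{v_i\}$ orthonormal in $V$, $\{w_i\}$ orthonormal in $W$, and $s_i \ge 0$ the singular values, the $v_i$ are exactly an orthonormal basis of $V$ and $Tv_i = s_i w_i$, so $\langle Tv_i, Tv_j\rangle = s_i s_j \langle w_i, w_j\rangle = 0$ for $i \neq j$. Equivalently, and perhaps cleaner to state: the operator $T^*T$ on $V$ is self-adjoint and positive semidefinite, so by the spectral theorem it has an orthonormal eigenbasis $v_1,\dots,v_k$ with $T^*Tv_i = s_i^2 v_i$; then $\langle Tv_i, Tv_j\rangle = \langle T^*Tv_i, v_j\rangle = s_i^2 \langle v_i,v_j\rangle = 0$ when $i\neq j$. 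Here $T^* \colon W \to V$ is the adjoint of $T$ with respect to the two inner products, which is well-defined since both spaces are finite-dimensional (or one can take $T^*$ to be $\Pi_V D^* \Pi_W$ where $D^*$ is the Hilbert-space adjoint of $D$ and $\Pi_V,\Pi_W$ are the orthogonal projections of $H$ onto $V$ and $W$).

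There is essentially no obstacle here; the only point requiring a moment of care is making sure the adjoint used in forming $T^*T$ is the adjoint of $T$ as a map $V \to W$ (i.e.\ built from the restricted inner products), rather than carelessly using the ambient adjoint $D^*$ on all of $H$ — but since $\mathrm{range}(T) \subseteq W$ and we only feed $T^*$ vectors of $W$, composing $\Pi_V D^* $ restricted to $W$ gives precisely the right operator, and $\langle Dv, Dv'\rangle_H = \langle Tv, Tv'\rangle_W$ for $v,v' \in V$ since $Dv, Dv' \in W$. So the proof is: pass to $W = V + DV$, form the positive semidefinite self-adjoint operator $T^*T$ on the finite-dimensional space $V$, diagonalize it by the spectral theorem to get the orthonormal basis $v_1,\dots,v_k$, and read off orthogonality of the images. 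The boundedness of $D$ is needed only to ensure $D(V)$ is a genuine (finite-dimensional, hence closed) subspace and that $D^*$ exists; no compactness or Hilbert–Schmidt hypothesis is used.
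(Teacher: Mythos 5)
Your proof is correct and takes exactly the route the paper intends: the paper simply remarks that the lemma ``follows from the singular value decomposition of finite-dimensional operators,'' and your argument (restricting to $V + D(V)$ and diagonalizing $T^*T$ by the spectral theorem) is a careful spelling-out of that same idea.
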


This follows from the singular value decomposition of finite-dimensional
operators.

\begin{lem}\label{lem:symmcloseness}
Let $U$ and $V$ be $k$-dimensional subspaces of $H$.
Then the two quantities appearing in the definition of $\angle(U,V)$ are equal:
$$
\max_{u\in U\cap S}\min_{v\in V\cap S}\|u-v\|=
\max_{v\in V\cap S}\min_{u\in U\cap S}\|u-v\|.
$$
\end{lem}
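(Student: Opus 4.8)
The claim is a symmetry statement about the two one-sided Hausdorff distances between the unit spheres of two $k$-dimensional subspaces $U$ and $V$. Write $\delta(U,V)=\max_{u\in U\cap S}\min_{v\in V\cap S}\|u-v\|$ for the first quantity; we must show $\delta(U,V)=\delta(V,U)$. The natural approach is to compute $\delta(U,V)$ explicitly in terms of the principal angles between $U$ and $V$, and observe that the resulting expression is symmetric. First I would invoke Lemma~\ref{lem:simdiag} (applied with $D$ the orthogonal projection onto $V$) to extract an orthonormal basis $u_1,\dots,u_k$ of $U$ whose images $P_V u_1,\dots,P_V u_k$ under the orthogonal projection $P_V$ onto $V$ are mutually orthogonal. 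Setting $\cos\theta_i=\|P_V u_i\|$, so that $0\le\theta_1\le\dots\le\theta_k\le\pi/2$, these are precisely the principal angles, and $v_i:=P_V u_i/\|P_V u_i\|$ (for those $i$ with $\theta_i<\pi/2$, extended to an orthonormal basis of $V$) gives an orthonormal basis of $V$ with $\langle u_i,v_j\rangle=\cos\theta_i\,\delta_{ij}$. The principal-angle description is visibly symmetric in $U$ and $V$, which is the whole point.

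\smallskip

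\textbf{Key steps.} Working in these coordinates, a unit vector $u=\sum a_i u_i\in U\cap S$ has $\sum a_i^2=1$, and its best approximation from $V\cap S$ is obtained by normalizing $P_V u=\sum a_i\cos\theta_i\, v_i$: one checks that for any unit $v\in V$, $\|u-v\|^2=2-2\langle u,v\rangle$, and $\langle u,v\rangle$ is maximized over $v\in V\cap S$ at $v=P_Vu/\|P_Vu\|$ with maximal value $\|P_Vu\|=\sqrt{\sum a_i^2\cos^2\theta_i}$ (if $P_Vu\neq0$; if $P_Vu=0$ the min is $\sqrt2$, consistent with $\theta_k=\pi/2$). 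Hence
$$
\min_{v\in V\cap S}\|u-v\|^2 = 2-2\sqrt{\textstyle\sum_i a_i^2\cos^2\theta_i},
$$
and maximizing over $\sum a_i^2=1$ puts all the mass on the index with smallest $\cos^2\theta_i$, i.e.\ largest $\theta_i$, giving $\delta(U,V)^2=2-2\cos\theta_k$ (with the convention $\cos\theta_k=0$ when $\theta_k=\pi/2$). Since the sequence of principal angles $\theta_1\le\dots\le\theta_k$ is the same whether computed from $U$ to $V$ or from $V$ to $U$ (the singular values of $P_V|_U$ and $P_U|_V$ coincide — both are the square roots of the eigenvalues of $P_UP_VP_U$, equivalently $P_VP_UP_V$), the expression $2-2\cos\theta_k$ is symmetric, so $\delta(U,V)=\delta(V,U)$.

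\smallskip

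\textbf{Main obstacle.} The computation above is routine once the principal-angle normal form is in hand; the only genuine point requiring care is establishing that normal form cleanly — i.e.\ that Lemma~\ref{lem:simdiag} really delivers orthonormal bases $\{u_i\}$ of $U$ and $\{v_i\}$ of $V$ that are ``biorthogonal'' in the sense $\langle u_i,v_j\rangle=c_i\delta_{ij}$ — and the symmetry of the principal angles under swapping $U$ and $V$. The biorthogonality is exactly the content of Lemma~\ref{lem:simdiag} with $D=P_V$: the vectors $P_Vu_i$ are orthogonal, and normalizing the nonzero ones (padding with an orthonormal basis of $V\cap U^{\perp\!\!\perp}\ldots$ more precisely of the orthocomplement in $V$ of $\mathrm{span}\{P_Vu_i\}$) yields the $v_j$. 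One should also handle the degenerate cases ($u_i\in V$, giving $\theta_i=0$; or $P_Vu_i=0$, giving $\theta_i=\pi/2$) so that the formula $\delta=\sqrt{2-2\cos\theta_k}$ holds with the stated convention. With these pieces assembled, symmetry is immediate. (Alternatively, one can avoid the explicit formula entirely: for each $u\in U\cap S$ the quantity $\min_{v\in V\cap S}\|u-v\|$ depends only on $\|P_Vu\|$, and one notes the supremum over $U\cap S$ of $\sqrt{1-\|P_Vu\|^2}=\|P_{V^\perp}u\|$ equals the operator norm $\|P_{V^\perp}P_U\|$, which equals $\|P_U P_{V^\perp}\|=\|P_{V^\perp}P_U\|$ by $\|T\|=\|T^*\|$ and then a short argument relates this to $\|P_{U^\perp}P_V\|$ via the equal-dimension hypothesis; but the principal-angle route is more transparent.)
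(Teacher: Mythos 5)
Your proof is correct. Both your argument and the paper's start from the same normal form: the singular value decomposition of $\Pi_V\circ\Pi_U$ (equivalently, Lemma~\ref{lem:simdiag} with $D=\Pi_V$), which produces biorthogonal orthonormal bases of $U$ and $V$ and the principal angles $\theta_1\le\dots\le\theta_k$. Where you diverge is in how the symmetry is extracted. The paper never computes either side: it uses the decomposition $U+V=\lin\{u_1,v_1\}\oplus\dots\oplus\lin\{u_k,v_k\}$ to build an isometry $R$ of $U+V$ that swaps $U$ and $V$, after which the equality of the two one-sided Hausdorff distances is immediate (an isometry preserves both). You instead evaluate the left-hand side explicitly, obtaining $\sqrt{2-2\cos\theta_k}$ with $\theta_k$ the largest principal angle, and then appeal to the symmetry of the principal angles (equality of the singular values of $\Pi_V\Pi_U$ and its adjoint, using that $\dim U=\dim V=k$ so the zero singular values also match in multiplicity). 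The paper's route is shorter and avoids the optimisation over $\sum a_i^2=1$ and the degenerate-case bookkeeping; yours buys the explicit formula $\angle(U,V)=\sqrt{2-2\cos\theta_{\max}}$, which identifies the metric concretely in terms of the maximal principal angle (and incidentally makes transparent the comparison with Kato's gap mentioned after the definition). Your handling of the degenerate cases ($P_Vu_i=0$) and of the padding of the $v_j$'s to a full orthonormal basis of $V$ is the only place requiring care, and you address it adequately.
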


\begin{proof}
Let $\Pi_U$ be the orthogonal projection onto $U$ and $\Pi_V$ be the orthogonal
projection onto $V$. Then the singular vectors of $\Pi_V\circ\Pi_U$ give an orthogonal
basis of $U$, $u_1,\ldots,u_n$ with images $s_1v_1,\ldots,s_nv_n$, where
$v_1,\ldots,v_n$ form an orthogonal basis of $V$ (if $\Pi_V\Pi_Uu_i=0$, then
$v_i$ can be chosen to be an arbitrary unit vector of $V$ satisfying the orthogonality
condition). Write $u_i=s_iv_i+w_i$ with $w_i\in V^\perp$. One can then check that
$\langle u_i,v_j\rangle=0$ if $i\ne j$. Notice that $u_i$ and $s_iv_i$ are either equal or
non-collinear. It follows from the above that $U+V$ may be expressed as the
orthogonal direct sum $\lin\{u_1,v_1\}\oplus
\ldots\oplus\lin\{u_n,v_n\}$. One can now check that the linear map $R$ from $U+V$ to itself
mapping $u_i$ to $v_i$ and vice versa is an isometry interchanging $U$ and $V$. 
Applying this map yields the desired equality.
\end{proof}

Let $V$ be a $k$-dimensional subspace of $H$, and
$\Pi$ be the orthogonal projection onto $V$.
We define the \emph{energy} of $\Pi$ (also
the `energy of $V$') to be
$$
\PS_k(\Pi)=-\Xi_k(\Pi\circ \diagpar3)=-\sum_{i=1}^k\log\|\diagpar3v_i\|,
$$
where the $(v_i)$ are as guaranteed by the Lemma \ref{lem:simdiag}
with the operator $D$ taken to be $\diagpar3$.

\begin{lem}\label{lem:PS}
For any $k\in\N$, there exists a $C>0$ such that
if $\Pi$ and $\Pi'$ are orthogonal projections onto $k$-dimensional
subspaces and $Q\subset\HS$ satisfies $\gamma(Q)\ge\frac 12$, then
$$
\big|\E\big(\Xi_k(\Pi\Delta\Pi')\big|\Delta\in Q\big)+
(\PS_k(\Pi)+\PS_k(\Pi'))\big|\le C.
$$
\end{lem}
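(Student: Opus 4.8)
The plan is to decompose $\Xi_k(\Pi\Delta\Pi')$ into a sum of contributions that can be handled separately, the main one being the energy terms. Using Lemma \ref{lem:simdiag}, fix an orthonormal basis $v_1,\dots,v_k$ of the range $V'$ of $\Pi'$ so that $\diagpar3 v_1,\dots,\diagpar3 v_k$ are orthogonal; similarly fix an orthonormal basis $u_1,\dots,u_k$ of the range $V$ of $\Pi$. Write $\Delta=\diagpar3\, N\,\diagpar3$ with $N$ a matrix of i.i.d.\ standard normals. Then $\Pi\Delta\Pi'$, viewed as a map $V'\to V$, has matrix (in the bases $(v_j)$, $(u_i)$) with entries $\langle \diagpar3 u_i,\, N\, \diagpar3 v_j\rangle$. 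Because the images of the $u_i$'s under $\diagpar3$ are orthogonal (but not normalised), and likewise for the $v_j$'s, we can write $\diagpar3 u_i = a_i \hat u_i$ and $\diagpar3 v_j = b_j \hat v_j$ with $(\hat u_i)$, $(\hat v_j)$ orthonormal systems and $a_i=\|\diagpar3 u_i\|$, $b_j=\|\diagpar3 v_j\|$. Then the $(i,j)$ entry is $a_i b_j\,\langle \hat u_i, N\hat v_j\rangle$, so the matrix of $\Pi\Delta\Pi'$ factors as $\diagmat(a_i)\cdot M\cdot\diagmat(b_j)$ where $M=(\langle\hat u_i,N\hat v_j\rangle)_{i,j}$ is itself a $k\times k$ matrix of i.i.d.\ standard normals (since $N$ has orthogonally-invariant distribution in each argument and the two orthonormal systems live on the source and target side independently — one must check the joint $k^2$ entries are jointly Gaussian with the right covariance, which holds as $(\hat u_i)$ and $(\hat v_j)$ are orthonormal). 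Taking $\Xi_k=\log|\det|$ on $k\times k$ matrices gives
$$
\Xi_k(\Pi\Delta\Pi') = \sum_{i=1}^k\log a_i + \sum_{j=1}^k\log b_j + \Xi_k(M)
= -\PS_k(\Pi) - \PS_k(\Pi') + \Xi_k(M).
$$

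So the claim reduces to showing $\big|\E\big(\Xi_k(M)\,\big|\,\Delta\in Q\big)\big|\le C$, where $M$ is a $k\times k$ GOE-type matrix of i.i.d.\ standard normals and $Q$ is any event of $\gamma$-probability at least $\tfrac12$. First, $\E\,\Xi_k(M)$ (unconditioned) is a finite constant depending only on $k$: indeed $\Xi_k(M)=\log|\det M|$, and $\E\log|\det M|$ is finite for a square Gaussian matrix — the only issue is the lower tail near $\det M=0$, but the density of $\det M$ (or more simply of the smallest singular value) is bounded near $0$, giving integrability of $\log$. For the \emph{conditional} version, I would argue that conditioning on an event of probability $\ge\tfrac12$ can inflate the expectation of $|\Xi_k(M)|$ by at most a bounded amount: split $\Xi_k(M)=\Xi_k(M)^+ - \Xi_k(M)^-$. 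The positive part satisfies $\E(\Xi_k(M)^+\mid \Delta\in Q)\le 2\,\E\,\Xi_k(M)^+ < \infty$ since $\PP(\Delta\in Q)\ge\tfrac12$. The negative part is controlled because $\Xi_k(M)^-$ has better than all-polynomial integrability — in fact $\E\exp(t\,\Xi_k(M)^-)<\infty$ for small $t>0$, since $\Xi_k(M)^-\le \sum_i (\log s_i(M))^-$ and each $(\log s_i(M))^-$ has an exponential moment (the smallest singular value of a $k\times k$ Gaussian matrix has a density bounded near $0$, so $\PP(s_k(M)<\delta)=O(\delta)$, giving $\PP(\Xi_k(M)^->t)$ decaying like $e^{-ct}$). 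Then for any event $Q$ with $\PP(Q)\ge\tfrac12$, $\E(\Xi_k(M)^-\mid Q)$ is bounded: $\E(\Xi_k(M)^-\,\mathbf 1_Q)\le \E\Xi_k(M)^-<\infty$ and dividing by $\PP(Q)\ge\tfrac12$ at most doubles it. Combining the two parts gives $|\E(\Xi_k(M)\mid\Delta\in Q)|\le C(k)$, completing the proof.

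The main obstacle I anticipate is the factorisation step: verifying carefully that the entries $\langle\hat u_i, N\hat v_j\rangle$, for $1\le i,j\le k$, form a genuine $k\times k$ array of i.i.d.\ standard normals, \emph{independently of the choice of} $(\hat u_i)$ and $(\hat v_j)$. Jointly these $k^2$ random variables are Gaussian with mean zero; the covariance of $\langle\hat u_i,N\hat v_j\rangle$ and $\langle\hat u_{i'},N\hat v_{j'}\rangle$ is $\langle\hat u_i,\hat u_{i'}\rangle\langle\hat v_j,\hat v_{j'}\rangle=\delta_{ii'}\delta_{jj'}$, which gives exactly the i.i.d.\ standard normal structure — so this works, but it needs to be stated cleanly, and one must be slightly careful that the random vectors $(v_j)$, $(u_i)$ produced by Lemma \ref{lem:simdiag} do not depend on $\Delta$ (they are determined by $\Pi,\Pi'$ and the fixed operator $\diagpar3$ only), so $M$ is indeed a function of $N$ alone with the stated law, and the event $\{\Delta\in Q\}$ is likewise a function of $N$. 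A secondary technical point is the integrability of $\log|\det|$ and the exponential moment of its negative part; these are standard facts about small Gaussian matrices but I would either cite them or include a short lemma establishing the bounded density of $s_k(M)$ near the origin.
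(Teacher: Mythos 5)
Your proposal is correct and follows essentially the same route as the paper: the same simultaneous-diagonalisation bases from Lemma \ref{lem:simdiag}, the same covariance computation showing the projected matrix factors as $\diagmat(a_i)M\diagmat(b_j)$ with $M$ a $k\times k$ matrix of i.i.d.\ standard normals, and the same observation that conditioning on an event of probability at least $\tfrac12$ changes the expectation of the integrable quantity $\log|\det M|$ by at most a bounded amount (the paper phrases this as the conditional density being the unconditional one times a factor in $[0,2]$, which is equivalent to your positive/negative-part splitting). The only cosmetic quibble is that $M$ is a Ginibre-type rather than GOE-type matrix, and the exponential-moment discussion for $\Xi_k(M)^-$ is unnecessary given that plain integrability already suffices for your argument.
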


\begin{proof}
Let $u_1,\ldots,u_k$ be the basis guaranteed by Lemma
\ref{lem:simdiag} (applied with $D=\diagpar3$) for the range of $\Pi$
and $v_1,\ldots,v_k$ be the corresponding basis for $\Pi'$.

Now $\Xi_k(\Pi\Delta\Pi')=\log\det|M|$, where $M$ is a random matrix whose
$(i,j)$ entry is $\langle u_i,\Delta v_j\rangle$. The entries of $M$
therefore have a multi-variate normal distribution. Each has mean 0, so
the unconditioned distribution of $M$ is determined by the covariance of the pairs
of entries of the matrix.

Using the fact that the coordinates of the $u$'s and $v$'s are bounded
and the entries of $\Delta$ decay exponentially, we calculate
\begin{align*}
\Cov(M_{ij},M_{i'j'})
&=
\E \sum_{l,m,l',m'}(u_i)_l\Delta_{lm}(v_j)_m
(u_{i'})_{l'}\Delta_{l'm'}(v_{j'})_{m'}\\
&=
\sum_{l,m}3^{-2(l+m)}(u_i)_l(u_{i'})_l(v_j)_m(v_{j'})_m\\
&=\langle \diagpar 3u_i,\diagpar3u_{i'}\rangle
\langle \diagpar3v_j,\diagpar3v_{j'}\rangle,
\end{align*}
where for the second line, we used the fact that distinct entries of $\Delta$
are independent, and so have 0 covariance.
We see then, by the choice of $u$'s and $v$'s, that distinct entries of $M$ have 0 covariance, and
so are independent. The variance of the $(i,j)$ entry of the matrix is
$\|\diagpar3u_i\|^2\|\diagpar3v_j\|^2$, hence the unconditioned distribution of the
$(i,j)$ entry of the matrix is $\|\diagpar3u_i\|\|\diagpar3v_j\|$
times a standard normal.

Notice that the entire $i$ row has a multiplicative factor of
$\|\diagpar3u_i\|$ and the entire $j$ column has a multiplicative
factor of $\|\diagpar3v_j\|$, so that the determinant is
$\prod_i\|\diagpar3u_i\|\prod_j\|\diagpar3v_j\|\det(N_k)$, where
$N_k$ is a $k\times k$ random matrix with independent standard
normal entries, so that taking logarithms, we see
$\Xi_k(\Pi\Delta\Pi')=-\PS_k(\Pi)-\PS_k(\Pi')+\log|\det N_k|$.

Replacing $\Delta$ with a conditioned version has the effect of
multiplying the density of $N_k$ by a factor in the range $[0,2]$.
Since $\log|\det(N_k)|$ is an integrable function, there are uniform upper and lower
bounds for $\int\log|\det(N_k)|\rho(N_k)$ over all functions $\rho$ taking values in
$[0,2]$, so that
$$
\big|\E\big(\Xi_k(\Pi\Delta\Pi')\big|\Delta\in Q\big)+
\PS_k(\Pi)+\PS_k(\Pi')\big|\le C,
$$
as required.
\end{proof}

\begin{cor}\label{cor:PS}
There exists $K>0$ such that
if $\Pi'$ and $\Pi''$ are two orthogonal projections and $Q\subset \HS$
satisfies $\gamma(Q)>\frac 12$, then
$$
\Big|
\E\big(\Xi_k(\Pi\Delta\Pi')|\Delta\in Q\big) -
\big(\E\Xi_k(\Pi\Delta\Pi_k) + \E\Xi_k(\Pi_k\Delta\Pi')\big)\Big| < C
$$
\end{cor}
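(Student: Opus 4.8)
The plan is to apply Lemma \ref{lem:PS} three times — once to each of the three quantities appearing in the statement — and then combine the resulting estimates, observing that the "energy" terms telescope. Concretely, Lemma \ref{lem:PS} (with the various choices of projections) gives us: first, with the pair $(\Pi,\Pi')$ and the set $Q$ (which satisfies $\gamma(Q)\ge\frac12$ by hypothesis), that
$$
\big|\E\big(\Xi_k(\Pi\Delta\Pi')\,\big|\,\Delta\in Q\big)+\PS_k(\Pi)+\PS_k(\Pi')\big|\le C;
$$
second, taking the pair $(\Pi,\Pi_k)$ and the unconditioned measure $Q=\HS$ (which trivially has $\gamma(\HS)=1\ge\frac12$), that $\big|\E\Xi_k(\Pi\Delta\Pi_k)+\PS_k(\Pi)+\PS_k(\Pi_k)\big|\le C$; and third, taking the pair $(\Pi_k,\Pi')$ similarly, that $\big|\E\Xi_k(\Pi_k\Delta\Pi')+\PS_k(\Pi_k)+\PS_k(\Pi')\big|\le C$.

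Next I would subtract: writing $X$ for the conditioned expectation and $Y_1,Y_2$ for the two unconditioned expectations $\E\Xi_k(\Pi\Delta\Pi_k)$ and $\E\Xi_k(\Pi_k\Delta\Pi')$, the three displays above say that $X+\PS_k(\Pi)+\PS_k(\Pi')$, $Y_1+\PS_k(\Pi)+\PS_k(\Pi_k)$ and $Y_2+\PS_k(\Pi_k)+\PS_k(\Pi')$ are each bounded in absolute value by $C$. Hence
$$
X-(Y_1+Y_2)=\big(X+\PS_k(\Pi)+\PS_k(\Pi')\big)-\big(Y_1+\PS_k(\Pi)+\PS_k(\Pi_k)\big)-\big(Y_2+\PS_k(\Pi_k)+\PS_k(\Pi')\big)+2\PS_k(\Pi_k),
$$
and the energy terms $\PS_k(\Pi)$, $\PS_k(\Pi')$ cancel, leaving a combination of three quantities each of size at most $C$ plus the fixed constant $2\PS_k(\Pi_k)$ (note $\PS_k(\Pi_k)=-\sum_{i=1}^k\log\|\diagpar3 e_i\|=\sum_{i=1}^k i\log 3$, which depends only on $k$). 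Absorbing everything into a single constant $C$ (renamed, per the paper's convention) gives the claimed bound $|X-(Y_1+Y_2)|<C$.

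There is no real obstacle here — the statement is essentially a bookkeeping consequence of Lemma \ref{lem:PS}. The only point requiring a word of care is the somewhat garbled notation in the statement: the left-hand side refers to $\Pi$ and $\Pi'$ while the hypothesis names the projections $\Pi'$ and $\Pi''$; I would simply fix the statement so that the three projections appearing are $\Pi$, $\Pi'$ and $\Pi_k$, with $\Pi,\Pi'$ arbitrary orthogonal projections onto $k$-dimensional subspaces, and then the argument above applies verbatim. One should also confirm that Lemma \ref{lem:PS} is genuinely applicable with $\Pi_k$ in the role of one of its projections — it is, since $\Pi_k$ is an orthogonal projection onto a $k$-dimensional subspace — and that the constant produced is uniform over the choice of the second projection, which is exactly the content of Lemma \ref{lem:PS}.
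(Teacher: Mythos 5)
Your proposal is correct and follows essentially the same route as the paper: three applications of Lemma \ref{lem:PS} (one conditioned on $Q$, two unconditioned with $\Pi_k$ in one slot), followed by the telescoping of the energy terms, leaving only the $k$-dependent constant $2\PS_k(\Pi_k)$. Your explicit value $\PS_k(\Pi_k)=\sum_{i=1}^k i\log 3$ is in fact the correct one (the paper's $\tfrac12 k(k-1)\log 3$ is a harmless slip), and your reading of the garbled projection names matches the intended statement.
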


\begin{proof}
By Lemma \ref{lem:PS}, we have the following
\begin{align*}
&\big|\E\big(\Xi_k(\Pi\Delta\Pi')\big|\Delta\in Q\big)+
\PS_k(\Pi)+\PS_k(\Pi')\big|\le C;\\
&\big|\E\Xi_k(\Pi\Delta\Pi_k)+
\PS_k(\Pi)+\PS_k(\Pi_k)\big|\le C;\\
&\big|\E\Xi_k(\Pi_k\Delta\Pi')+
\PS_k(\Pi)+\PS_k(\Pi_k)\big|\le C,
\end{align*}
where $C$ is the constant from Lemma \ref{lem:PS}.

We calculate that $\PS_k(\Pi_k)=\frac 12k(k-1)\log 3$, so that
combining the inequalities, we obtain
$$
\big|\E\big(\Xi_k(\Pi\Delta\Pi')\big|\Delta\in Q\big)
-\big(\E\Xi_k(\Pi\Delta\Pi_k) + \E\Xi_k(\Pi_k\Delta\Pi')\big)\Big|
\le K,
$$
where $K=3C+k(k-1)\log 3$.
\end{proof}
%
%%%%%%%%%%%
% Step 1  %
%%%%%%%%%%%

\section{Good Blocks}\label{sec:goodBlocks}
\label{S:step1}
This section deals with \textsl{good blocks}.  The strategy we follow
goes back to Ledrappier and Young in the context of invertible
matrices \cite[Lemmas 3.3, 3.6 \& 4.3]{LedrappierYoung}, and it was
later used in \cite{FGTQ-cpam}.  Lemma~\ref{lem:ly} is the main tool
to control the effect of perturbations on good blocks.
Lemma~\ref{lem:goodBlocks} collects standard facts about Lyapunov
exponents, Oseledets splittings and their approximations via singular
vectors, which are used to define good blocks.
Lemma~\ref{lem:goodPert} establishes the conditions defining
\textsl{tame perturbations}.  Proposition \ref{prop:GlueingGoodBlocks}
provides a lower bound on $\Xi_k$ over a sequence of tame
perturbations, comparable with $\Xi_k$ for the unperturbed cocycle.

For each $k\in \N$, we define $\topspacemat kA$ to be the space
spanned by the \emph{images} of the singular vectors with $k$ largest singular values
under $A$, and $\bottomspacemat kA$ to be the space spanned by the
orthogonal complement of the pre-image of $\topspacemat kA$ under $A$.
Thus, $\bottomspacemat kA$ is exactly the space spanned by those
singular vectors of $A$ whose singular value is not amongst the $k$ largest.
We note that the spaces
$\bottomspacemat kA, \topspacemat kA$ are uniquely defined when the
singular values $s_k(A)$ and $s_{k+1}(A)$ are distinct. We will
always use our results in this setting, and therefore do not worry
about the possibility of non-uniqueness.

We collect some properties of singular values and singular vectors for compact
operators on Hilbert spaces and matrices.
\begin{lem}
\label{lem:sv}
Let $A$ be a  compact operator on a Hilbert space, $H$. Let the singular values be
$s_1(A),s_2(A),\ldots$.
\begin{enumerate}[(a)]
\item
$s_j(A)=\min_{V\in\mathcal G^{j-1}(H)}\max_{x\in V\cap S}\|Ax\|$;\label{it:minmax}

\item
$s_j(A)=\max_{V\in\mathcal G_j(H)}\min_{x\in V\cap S}\|Ax\|$;\label{it:maxmin}
\item\label{it:sjcont}
$|s_j(A)-s_j(B)|\le\|A-B\|_\op$;\label{it:cty}
\end{enumerate}
\end{lem}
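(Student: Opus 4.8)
The plan is to reduce all three statements to the spectral theorem for the compact positive self-adjoint operator $A^*A$. Since $H$ is separable and $A^*A$ is compact and self-adjoint, there is an orthonormal basis $(v_i)_{i\ge 1}$ of $H$ consisting of eigenvectors of $A^*A$, with $A^*Av_i=s_i(A)^2v_i$ and $s_1(A)\ge s_2(A)\ge\cdots\to 0$ (if $A$ has finite rank, or an infinite-dimensional kernel, one simply pads the list of singular values with zeros; nothing below is affected). For any $x=\sum_i c_iv_i\in S$ we then have $\|Ax\|^2=\langle A^*Ax,x\rangle=\sum_i s_i(A)^2c_i^2$ with $\sum_i c_i^2=1$, and this single identity drives everything.

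For part (\ref{it:minmax}), fix $V\in\mathcal G^{j-1}(H)$. Since $W:=\lin\{v_1,\dots,v_j\}$ has dimension $j$ and $V$ has codimension $j-1$, the intersection $V\cap W$ contains a unit vector $x=\sum_{i=1}^{j}c_iv_i$, for which $\|Ax\|^2=\sum_{i=1}^{j}s_i(A)^2c_i^2\ge s_j(A)^2$; hence $\max_{x\in V\cap S}\|Ax\|\ge s_j(A)$, and taking the minimum over $V$ gives $\ge s_j(A)$. Conversely, $V_0:=\lin\{v_1,\dots,v_{j-1}\}^\perp$ lies in $\mathcal G^{j-1}(H)$, and every $x\in V_0\cap S$ has the form $\sum_{i\ge j}c_iv_i$, so $\|Ax\|^2=\sum_{i\ge j}s_i(A)^2c_i^2\le s_j(A)^2$; this particular $V_0$ forces the minimum to be $\le s_j(A)$, proving (\ref{it:minmax}).

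Part (\ref{it:maxmin}) is dual. Taking $V=\lin\{v_1,\dots,v_j\}\in\mathcal G_j(H)$, every unit $x\in V$ satisfies $\|Ax\|^2=\sum_{i=1}^{j}s_i(A)^2c_i^2\ge s_j(A)^2$, so $\min_{x\in V\cap S}\|Ax\|\ge s_j(A)$ and the maximum over $\mathcal G_j(H)$ is $\ge s_j(A)$. Conversely, for arbitrary $V\in\mathcal G_j(H)$ the closed subspace $\lin\{v_1,\dots,v_{j-1}\}^\perp$ has codimension $j-1$, hence meets $V$ in a unit vector $x=\sum_{i\ge j}c_iv_i$ with $\|Ax\|^2=\sum_{i\ge j}s_i(A)^2c_i^2\le s_j(A)^2$; thus $\min_{x\in V\cap S}\|Ax\|\le s_j(A)$ and the maximum over $V$ is $\le s_j(A)$. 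Finally, (\ref{it:sjcont}) follows formally from (\ref{it:minmax}): for any $V\in\mathcal G^{j-1}(H)$ and $x\in V\cap S$, $\|Ax\|\le\|Bx\|+\|(A-B)x\|\le\|Bx\|+\|A-B\|_\op$, so $\max_{x\in V\cap S}\|Ax\|\le\|A-B\|_\op+\max_{x\in V\cap S}\|Bx\|$; minimizing over $V$ yields $s_j(A)\le s_j(B)+\|A-B\|_\op$, and swapping $A$ and $B$ gives the reverse inequality.

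There is no serious obstacle here; the only points needing care are the standard infinite-dimensional bookkeeping — invoking the spectral theorem to obtain a full orthonormal eigenbasis even when $A$ fails to be injective or has finite rank, and the elementary fact that a subspace of finite codimension $j-1$ meets every $j$-dimensional subspace nontrivially (which relies only on the codimension being finite, so that dimension counting applies).
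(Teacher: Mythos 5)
Your proposal is correct and matches the paper's treatment: the paper simply cites (a) and (b) as well known (these are exactly the Courant--Fischer characterizations you establish via the spectral theorem for $A^*A$), and proves (c) by the same one-line Lipschitz perturbation argument, using (b) where you use (a) --- an immaterial difference. The only bookkeeping point in your write-up (``padding with zeros'' when the kernel is infinite-dimensional while infinitely many singular values are nonzero) is harmless, since the argument only ever uses the top $j$ eigenvectors and the fact that $\|Ax\|\le s_j(A)$ for unit vectors orthogonal to $v_1,\ldots,v_{j-1}$.
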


\begin{proof}
The characterizations \eqref{it:minmax} and \eqref{it:maxmin} are well known.

To show \eqref{it:cty}, using \eqref{it:maxmin}, let $V$ be a $j$-dimensional space
such that $\|Ax\|\ge s_j(A)$ for all $x\in V\cap S$. Then $\|Bx\|\ge s_j(A)-\|A-B\|_\op$
for all $x\in V\cap S$, so that using \eqref{it:maxmin} again, we see
$s_j(B)\ge s_j(A)-\|A-B\|_\op$. By symmetry, $s_j(A)\ge s_j(B)-\|A-B\|_\op$, giving
the result.
\end{proof}

\begin{lem}
\label{lem:perp}
Let $U\in\mathcal G^k(H)$ and $V\in \mathcal{G}_k(H)$.
Then $s_k(\Pi_{U^\perp}\Pi_V)\ge{\perp}(U,V)$.
\end{lem}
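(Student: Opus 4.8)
The plan is to use the max-min characterization of $s_k$ from Lemma~\ref{lem:sv}\eqref{it:maxmin}, which says $s_k(\Pi_{U^\perp}\Pi_V) = \max_{W\in\mathcal G_k(H)}\min_{x\in W\cap S}\|\Pi_{U^\perp}\Pi_V x\|$. The natural test space to plug in is $W=V$ itself, since $\dim V = k$. This reduces the problem to showing that $\|\Pi_{U^\perp}\Pi_V v\| = \|\Pi_{U^\perp}v\|\ge {\perp}(U,V)$ for every $v\in V\cap S$ (using $\Pi_V v = v$ on $V$). So the crux is the pointwise estimate $\|\Pi_{U^\perp}v\|\ge {\perp}(U,V)$ for unit vectors $v\in V$.

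To get this pointwise bound, I would relate $\|\Pi_{U^\perp}v\|$ to the distance from $v$ to the unit sphere of $U$. Write $v = \Pi_U v + \Pi_{U^\perp}v$ and set $\cos\theta = \|\Pi_U v\|$, $\sin\theta = \|\Pi_{U^\perp}v\|$ where $\theta$ is the angle between $v$ and $U$. If $\Pi_U v \ne 0$, the unit vector $u = \Pi_U v/\|\Pi_U v\|$ lies in $U\cap S$, and a direct computation gives $\|v-u\|^2 = (1-\cos\theta)^2 + \sin^2\theta = 2 - 2\cos\theta = 2(1-\cos\theta)$. Since $1-\cos\theta \le \sin\theta$ for $\theta\in[0,\pi/2]$ (equivalently $1-\cos\theta\le \sqrt{1-\cos^2\theta}$, which holds as $1-\cos\theta\le 1+\cos\theta$), we get $\min_{u\in U\cap S}\|v-u\| \le \|v-u\| = \sqrt{2(1-\cos\theta)}\le\sqrt 2\,\sqrt{\sin\theta}$... — actually the cleaner route is: $\min_{u\in U\cap S}\|v-u\|^2 = 2(1-\cos\theta)\le 2\sin\theta\cdot\frac{1}{1+\cos\theta}\cdot(1+\cos\theta)$; let me instead just bound directly. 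We have $\min_{u\in U\cap S}\|v-u\| \le \sqrt{2(1-\cos\theta)}$, and I claim $\sqrt{2(1-\cos\theta)}\le \sqrt 2\sin\theta$ is false in general, so the correct comparison is simply $2(1-\cos\theta) = 2\sin^2\theta/(1+\cos\theta)\le 2\sin^2\theta$ when $\cos\theta\ge 0$. Hence $\min_{u\in U\cap S}\|v-u\|\le \sqrt 2\,\|\Pi_{U^\perp}v\|$, i.e. $\|\Pi_{U^\perp}v\|\ge \tfrac{1}{\sqrt 2}\min_{u\in U\cap S}\|v-u\|\ge \tfrac1{\sqrt2}\min_{u\in U\cap S, v'\in V\cap S}\|v'-u\| = {\perp}(U,V)$. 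When $\Pi_U v = 0$ we have $\|\Pi_{U^\perp}v\|=1\ge{\perp}(U,V)$ trivially, and one must check the minimization over $u\in U\cap S$ is attained (it is, by compactness of $U\cap S$ in finite codimension — or rather, one works with the infimum, which suffices).

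Putting these together: for all $v\in V\cap S$, $\|\Pi_{U^\perp}\Pi_V v\| = \|\Pi_{U^\perp}v\| \ge {\perp}(U,V)$, so $\min_{v\in V\cap S}\|\Pi_{U^\perp}\Pi_V v\|\ge{\perp}(U,V)$, and by Lemma~\ref{lem:sv}\eqref{it:maxmin} with test space $V$, $s_k(\Pi_{U^\perp}\Pi_V)\ge{\perp}(U,V)$.

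The main obstacle — really the only subtle point — is making sure the comparison between the Euclidean distance $\|v-u\|$ on the sphere and the norm $\|\Pi_{U^\perp}v\|$ is done with the correct constant $1/\sqrt 2$, matching the normalization chosen in the definition of ${\perp}$; the trigonometric inequality $1-\cos\theta\le\tfrac12\cdot 2\sin^2\theta/(1+\cos\theta)$ type manipulation must land exactly on the factor $\sqrt2$ so that the bound is tight (equality when $U\perp V$). Everything else is a routine application of the variational characterization of singular values.
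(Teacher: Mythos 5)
Your proof is correct and follows essentially the same route as the paper's: reduce via the max--min characterization of $s_k$ (with test space $V$) to the pointwise bound $\|\Pi_{U^\perp}v\|\ge{\perp}(U,V)$ for $v\in V\cap S$, and then establish that bound by decomposing $v$ along $U$ and $U^\perp$ and comparing $\sin\theta$ with the chordal distance $\sqrt{2(1-\cos\theta)}$ using $\cos\theta\ge 0$. The paper phrases the same trigonometric step via half-angles ($\sin\theta=2\sin\tfrac\theta2\cos\tfrac\theta2$ and $\cos\tfrac\theta2\ge\tfrac1{\sqrt2}$), but the content is identical.
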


\begin{proof}
Choose $v\in V$ with $\|v\|=1$.
Let $v=u+w$ with $u\in U$ and $w\in U^\perp$. Let $\hat u\in U\cap S$
be such that $u=\|u\|\hat u$ ($\hat u$ may be chosen arbitrarily if $u=0$)
and let
$\theta$ be the angle between $\hat u$ and $v$, so that $0<\theta\le\frac\pi2$.
By assumption $\|\hat u-v\|\ge\sqrt 2\,{\perp}(U,V)$.
We have $\|\hat u-v\|=2\sin\frac \theta2$.
Notice that $\|w\|=\|\Pi_{U^\perp}v\|=\sin\theta=2\sin\frac\theta2\cos
\frac\theta2\ge \sqrt 2\,{\perp}(U,V)\cos\frac\theta2$. Since $\theta\le\frac\pi 2$,
we see $\|\Pi_{U^\perp}v\|\ge {\perp}(U,V)$ for all $v\in V\cap S$.
\end{proof}

\begin{lem}\label{lem:ly}
For any $\delta<\frac 12$, there exists a $K>\delta^{-(4k+3)}$ such that if (i)  the
$k$th singular value of a compact linear operator $A: X \to X$ exceeds $K$; (ii)
the $(k+1)$st singular value of $A$ is at most 1; and (iii) $\|B-A\|\le 1$, then the
following hold:
\begin{enumerate}[(a)]
\item\label{it:pertsvs}
   $e^{-\del}\le s_j(A)/s_j(B)\le e^\del$ for each $j\le k$
    and $s_j(B)\le 2$ for each $j>k$;
\item
  $\angle (\topspacemat kA,\topspacemat kB)$ and
  $\angle(\bottomspacemat kA,\bottomspacemat kB)$ are less than
    $\delta$;\label{it:spacecont}
\item If $V$ is any subspace of dimension $k$ such that
  ${\perp}(V, \bottomspacemat kA)>\delta$, then
  $\angle(BV,\topspacemat kA)<\delta$;\label{it:contr}
\item If $V$ is a subspace of dimension $k$ and
    ${\perp}(V,\bottomspacemat kA)>2\delta$, then $|\det(B|_V)|\ge
   \delta^{k}\exp\Xi_{k}(B)$.\label{it:det}
  \end{enumerate}
\end{lem}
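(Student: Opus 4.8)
The plan is to choose $K$ large (in terms of $\delta$ and $k$) so that all four conclusions follow from perturbation estimates on singular values and singular vectors. First I would fix $\delta<\tfrac12$ and set $K=K(\delta,k)$ to be a large power of $\delta^{-1}$, certainly exceeding $\delta^{-(4k+3)}$, with the precise exponent determined at the end. Throughout, recall that $\|B-A\|_\op\le\|B-A\|\le 1$ by hypothesis~(iii).

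For part~\eqref{it:pertsvs}: by Lemma~\ref{lem:sv}\eqref{it:cty}, $|s_j(A)-s_j(B)|\le 1$ for all $j$. For $j\le k$ we have $s_j(A)\ge s_k(A)\ge K$, so $s_j(B)/s_j(A)\in[1-1/K,1+1/K]$, and choosing $K$ large enough that $1-1/K\ge e^{-\delta}$ and $1+1/K\le e^\delta$ (i.e. $K\ge (1-e^{-\delta})^{-1}$, which is $O(\delta^{-1})$) gives the ratio bounds. For $j>k$, $s_j(B)\le s_j(A)+1\le s_{k+1}(A)+1\le 2$ by hypothesis~(ii). For part~\eqref{it:spacecont}: the key input is a quantitative "gap theorem" — since $s_k(A)/s_{k+1}(A)\ge K$ is a large spectral gap for $A$ and $B$ is a size-$\le1$ perturbation, the top-$k$ singular subspaces $\topspacemat kB,\bottomspacemat kB$ are within $\angle$-distance $O(1/K)$ (more precisely $O(\text{gap}^{-1})$, with the gap $\ge K-1$ for $B$ by part~\eqref{it:pertsvs} and~(ii)) of those of $A$. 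This is the standard Davis–Kahan / $\sin\Theta$ estimate adapted to singular subspaces of compact operators; it forces $\angle(\topspacemat kA,\topspacemat kB)<\delta$ and $\angle(\bottomspacemat kA,\bottomspacemat kB)<\delta$ once $K$ is large.

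For part~\eqref{it:contr}: suppose ${\perp}(V,\bottomspacemat kA)>\delta$. Writing $\bottomspacemat kB$ for the bottom space of $B$, the reverse triangle inequality for ${\perp}$ together with part~\eqref{it:spacecont} gives ${\perp}(V,\bottomspacemat kB)\ge {\perp}(V,\bottomspacemat kA)-\angle(\bottomspacemat kA,\bottomspacemat kB)>0$, and more usefully $\ge \delta - \delta'$ where $\delta'\ll\delta$ can be arranged by enlarging $K$, say $\ge\tfrac12\delta$. Then decompose in the singular basis of $B$: every unit $v\in V$ has its component in the pre-image of $\topspacemat kB$ bounded below (via Lemma~\ref{lem:perp}, $\|\Pi_{\bottomspacemat kB^{\,\perp}}\Pi_V\|$-type bounds show the "good" component has norm $\gtrsim\delta$), while $B$ amplifies that component by at least $s_k(B)\ge K-1$ and amplifies the complementary component by at most $s_{k+1}(B)\le 2$. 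Hence $Bv$ points within angle $O(1/(\delta K))$ of $\topspacemat kB$, which is within $\angle$-distance $O(1/K)$ of $\topspacemat kA$; choosing $K$ large (now we need $K\gtrsim \delta^{-2}$ or so) makes $\angle(BV,\topspacemat kA)<\delta$. For part~\eqref{it:det}: with ${\perp}(V,\bottomspacemat kA)>2\delta$, the same decomposition gives, for an orthonormal basis $v_1,\dots,v_k$ of $V$ chosen so that the images $Bv_i$ are orthogonal (Lemma~\ref{lem:simdiag} with $D=B$), that each $\|Bv_i\|\ge c\,\delta\, s_k(B)$; taking the product over $i$ and comparing with $\exp\Xi_k(B)=s_1(B)\cdots s_k(B)$, the ratio is at least $(c\delta)^k$ times $\prod_i (s_k(B)/s_i(B))\cdot(\text{angular factors})$ — the angular correction costs at most a further $\delta^{O(k)}$, so choosing the exponent in $K$ large enough to absorb everything yields $|\det(B|_V)|\ge\delta^k\exp\Xi_k(B)$.

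The main obstacle is part~\eqref{it:det}: one must track how the angle between $V$ and the bottom space degrades the product of the $\|Bv_i\|$ in a way that only costs a factor polynomial in $\delta$ (not in $\delta$ raised to a power depending on the ambient — here infinite — dimension). The resolution is that only the $k$-dimensional geometry of $V$, $\topspacemat kB$ and $\bottomspacemat kB$ matters: after projecting everything into the finite-dimensional space $V\oplus\topspacemat kB$ (dimension $\le 2k$) the estimate reduces to a finite-dimensional linear-algebra bound with constants depending only on $k$, and the requirement $K>\delta^{-(4k+3)}$ is exactly what makes the exponents balance.
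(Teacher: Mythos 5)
Your treatments of parts \eqref{it:pertsvs}--\eqref{it:contr} are sound and essentially parallel the paper's: \eqref{it:pertsvs} is the same Weyl-type perturbation bound via Lemma \ref{lem:sv}\eqref{it:sjcont}, and \eqref{it:contr} is the same ``amplification of the transverse component'' argument (you decompose relative to $B$'s singular spaces after transferring transversality via \eqref{it:spacecont}, the paper decomposes relative to $A$'s spaces and uses $\|Bv-Aw\|_\op\le 2$ directly; both work and need only $K\gtrsim\delta^{-2}$). For \eqref{it:spacecont} you invoke a Davis--Kahan/Wedin $\sin\Theta$ theorem where the paper gives a short self-contained argument (if $v$ is $\delta$-far from $\bottomspacemat kA\cap S$ then $\|Bv\|>2\ge s_{k+1}(B)$, so $v\notin\bottomspacemat kB$, plus a direct computation for the top spaces); the resulting $O(\|A-B\|_\op/\mathrm{gap})=O(1/K)$ bound is the same, so this substitution is acceptable.

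Part \eqref{it:det} as written has a genuine gap. Bounding each $\|Bv_i\|$ below by $c\,\delta\,s_k(B)$ and multiplying yields only $|\det(B|_V)|\ge (c\delta)^k s_k(B)^k$, and the deficit against $\exp\Xi_k(B)=s_1(B)\cdots s_k(B)$ is the factor $\prod_{i<k}s_k(B)/s_i(B)$, which you flag as ``angular factors'' but cannot absorb: it is governed by the ratios of the top singular values of $B$, not by $\delta$ or $K$, and can be arbitrarily small (e.g.\ when $s_1(B)\gg s_k(B)$). Enlarging the exponent in $K$ does not help, since the target $\delta^k\exp\Xi_k(B)$ has exponent exactly $k$; nor does projecting into $V\oplus\topspacemat kB$, because the loss is not dimensional. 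The missing idea is multiplicative rather than vector-by-vector: factor the determinant as
$$
|\det(B|_V)|\;\ge\;|\det(\Pi_{\topspacemat kB}B\Pi_V)|
\;=\;\exp\Xi_k\bigl(B\Pi_{\bottomspacemat kB^\perp}\bigr)\cdot
\exp\Xi_k\bigl(\Pi_{\bottomspacemat kB^\perp}\Pi_V\bigr).
$$
The first factor equals $\exp\Xi_k(B)$ exactly (no loss on the large singular values), and the second is at least $\delta^k$ because, by Lemma \ref{lem:perp} combined with part \eqref{it:spacecont} and the reverse triangle inequality for ${\perp}$, every singular value of $\Pi_{\bottomspacemat kB^\perp}\Pi_V$ is at least ${\perp}(\bottomspacemat kB,V)>2\delta-\delta=\delta$. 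Incidentally, the hypothesis $K>\delta^{-(4k+3)}$ is not what ``makes the exponents balance'' in \eqref{it:det}; that lower bound on $K$ is imposed for later use (it is what Lemma \ref{lem:iteratedtopspacesclose} needs).
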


\begin{proof}
  For each closed subspace $W$ of $X$, let $\Pi_W:X \to W$ be the
  orthogonal projection onto $W$.
  \begin{enumerate}[(a)]
  \item
For the first part, notice that by assumption, for $j\le k$, we have
$s_j(A)\ge K$.  Also by Lemma \ref{lem:sv}\eqref{it:sjcont}, we have
$|s_j(A)-s_j(B)|\le 1$, so that $\frac{K}{K+1}\le s_j(A)/s_j(B)\le \frac{K}{K-1}$.
The second part of the claim follows from Lemma \ref{lem:sv}\eqref{it:sjcont} also.

\item Let $K>1+\frac{6}{\del}$. For symmetry, in this part, we assume only
$s_k(A),s_k(B)\ge K-1$, $s_{k+1}(A),s_{k+1}(B)\le 2$ and $\|A-B\|_\op\le 1$.

Let $v\in S$ satisfy $d(v,\bottomspacemat kA\cap S)\ge\delta$.
We will show that $v\not\in F_k(B)$. Let $v=u+w$ with $u\in F_k(A)$ and $w\in F_k(A)^\perp$.
By assumption, $\|w\|\ge\frac\delta2$, so that $\|Bv\|\ge \|Av\|-1\ge \|Aw\|-1
>(K-1)\|w\|-1>2$. On the other hand, if $v\in \bottomspacemat kB$, then
$\|Bv\|\le s_{k+1}(B)\le 2$.
The identical argument shows that if $v\in \bottomspacemat kA\cap S$, then
$d(v,\bottomspacemat kB\cap S)<\delta$

To show the closeness of the fast spaces, first let $v\in \bottomspacemat kB^\perp\cap S$,
and write $v$ as $au+w$, where $u\in \bottomspacemat kA\cap S$ and
$w\in \bottomspacemat kA^\perp$. Let $u'\in \bottomspacemat kB\cap S$
satisfy $\|u-u'\|<\delta$ (such a $u'$ exists by the paragraph above). Now $\langle v,u\rangle
=\langle v,u'\rangle + \langle v,u-u'\rangle$. The first term is 0 and the second term
is less than $\delta$ in absolute value. Hence $|a|<\delta$ and $\|w\|\ge \frac 12$.
Now $Bv=aAu+Aw+(B-A)v$. In particular, $\|Bv-Aw\|\le 2\delta+1\le 2$ while $\|Bv\|\ge
K-1$. Hence if $z\in\topspacemat kB\cap S$, we have $d(z,\topspacemat kA)\le 2/(K-1)$,
so $d(z,\topspacemat kA\cap S)\le 4/(K-1)$. The identical argument holds if the roles
of $A$ and $B$ are reversed, so $\angle(\topspacemat kA,\topspacemat kB)<4/(K-1)<\delta$.

\item
Let $K>4/\delta^2+2/\delta$.
Let $v\in V\cap S$ and write $v=u+w$ with $u\in\bottomspacemat kA$
and $w\in\bottomspacemat kA^\perp$. By assumption, $\|w\|\ge\delta$.
Hence $\|Aw\|\ge K\delta$, while $\|Au\|\le 1$. Since $\|B-A\|_\op\le 1$,
we have $\|Bv-Aw\|\le \|Bv-Av\|+\|Av-Aw\|\le 2$, so that $\|Bv-Aw\|/\|Bv\|
\le 2/(K\delta-2)$. Hence for an arbitrary element, $y$ of $BV\cap S$, we
have $d(y,\topspacemat kA)\le 2/(K\delta-2)<\frac\delta2$ and
$d(y,\topspacemat kA\cap S)\le 4/(K\delta-2)<\delta$. By Lemma
\ref{lem:symmcloseness}, we deduce that
$\angle(BV,\topspacemat kA)<\delta$ as required.

\item We have that $ \log |\det(B|_V)|\geq \Xi_k(\Pi_{\topspacemat kB}B|_V)=
\Xi_k(B\Pi_{\bottomspacemat kB^\perp} |_V)=
\Xi_k(B\Pi_{\bottomspacemat kB^\perp}\Pi_V)
=\Xi_k(B \Pi_{\bottomspacemat kB^\perp}) +
\Xi_k(\Pi_{\bottomspacemat kB^\perp} \Pi_V)\geq \Xi_{k}(B) + k
\log \delta.$ The last inequality follows from the facts that
$\Xi_k(B \Pi_{\bottomspacemat kB^\perp}) =\Xi_{k}(B)$; and
${\perp}(\bottomspacemat kB^\perp,
V)\ge {\perp}(\bottomspacemat kA^\perp,V)-\angle(\bottomspacemat kA^\perp,
\bottomspacemat kB^\perp)>\delta$
so that
$\| \Pi_{\bottomspacemat kB^\perp} \Pi_V v \| \geq
\delta \|v\|$ for every $v \in V$ by Lemma \ref{lem:perp}, hence
$\Xi_k(\Pi_{\bottomspacemat kB^\perp} \Pi_V)\geq k \log
\del$. The claim follows.

\end{enumerate}
\end{proof}

The following lemma underlies the definition of good blocks:
Using the notation of the lemma,
if $n\geq n_0$ and $\om \in G$, and  we say the block $\cocycle n\om$ is \emph{good}.
See \cite[Lemma 2.4]{FGTQ-cpam} for a proof in the context of matrix cocycles,
which applies without changes in our setting.

\begin{lem}[Good blocks]\label{lem:goodBlocks}
Let $\sigma$ be an invertible
ergodic measure-preserving transformation of $(\Omega,\PP)$ and
let $A\colon\Omega\to \SHS$ be a measurable map, taking values
in the strong Hilbert-Schmidt operators on $H$, and such that $\int\log^+\|A(\om)\|_\SHS
\,d\PP(\om)<\infty$. Let the Lyapunov exponents of the cocycle $A$
be $\infty>\mu_1\geq \mu_2\geq \ldots \ge-\infty$, counted
with multiplicities. Suppose $k \geq 1$ is such that
$\mu_k>0>\mu_{k+1}$. Let $\topspaceDSunpert k\om$ and
$\bottomspaceDSunpert k\om$ denote the
$k$-dimensional and $k$-codimensional Oseledets spaces of $A$
at $\om$ corresponding to Lyapunov exponents
$\mu_1\geq \dots \geq \mu_k$ and $\mu_{k+1}\geq \dots$, respectively.

Let $\xi>0$ and $\delta_1>0$ be given. Then there exist $n_0>0$,
$\tau\le\min(\delta_1,\frac 14\mu_k)$ and
$0<\delta\le\delta_1$ such that: for all
$n\ge n_0$, there exists a set $G\subseteq \Omega$ with $\mathbb
P(G)>1-\xi$ such that for $\omega\in G$, we have

\begin{enumerate}[(a)]
\item
$\perp(\bottomspaceDSunpert k\omega,\topspaceDSunpert k\omega)>
10\delta$;\label{it:sep}
\item
$\angle(\topspacemat k{\cocycle n{\omega}},
\topspaceDSunpert k{\sigma^n\omega})<\delta$;\label{it:Econt}
\item
$\angle(\bottomspacemat k{\cocycle n\omega},\bottomspaceDSunpert k\omega
)<\delta$;\label{it:Fcont}
\item
  $e^{(\mu_k+\tau)n}>s_{k}(\cocycle n\omega)>\max(K(\delta),e^{(\mu_k-\tau)n})$ and
  $s_{k+1}(\cocycle n\omega)<1$,
  where $K(\delta)$
  is as given in Lemma \ref{lem:ly}\label{it:svsep}.
\item $\frac{1}{n}\sum_{i=0}^{n-1}\log(1+\|A_{\sigma^i\om}\|_\SHS)<
2\int\log(1+\|A_\om\|_\SHS)\,d\PP(\om)$.\label{it:logSHS}
\end{enumerate}
\end{lem}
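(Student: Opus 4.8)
The plan is to obtain each of (a)--(e) as a uniform-on-a-large-set upgrade of a pointwise statement coming from the semi-invertible multiplicative ergodic theorem and the Birkhoff ergodic theorem. The integrability hypothesis $\int\log^+\|A(\om)\|_\SHS\,d\PP<\infty$ together with $\mu_k>0>\mu_{k+1}$ puts us in the setting of the semi-invertible MET (as in \cite{FLQ1,FLQ2,GTQ1,GTQ2}), which provides, for $\PP$-a.e.\ $\om$: a genuine measurable equivariant splitting $H=\topspaceDSunpert k\om\oplus\bottomspaceDSunpert k\om$ into the fast and slow Oseledets subspaces; the convergences $\tfrac1n\log s_k(\cocycle n\om)\to\mu_k$ and $\tfrac1n\log s_{k+1}(\cocycle n\om)\to\mu_{k+1}$; and the convergences $\angle(\topspacemat k{\cocycle n\om},\topspaceDSunpert k{\sigma^n\om})\to 0$ and $\angle(\bottomspacemat k{\cocycle n\om},\bottomspaceDSunpert k\om)\to 0$. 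Measurability of the splitting together with the fact that it is a genuine direct sum at a.e.\ point forces $\perp(\bottomspaceDSunpert k\om,\topspaceDSunpert k\om)>0$ a.e. Finally, $I:=\int\log(1+\|A_\om\|_\SHS)\,d\PP$ is strictly positive, since $I=0$ would force $A=0$ a.e., contradicting $\mu_k>0$.

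Next I would fix the constants in the order $\delta$, then $\tau$, then $n_0$. Given $\xi$ and $\delta_1$: because $\perp(\bottomspaceDSunpert k\om,\topspaceDSunpert k\om)>0$ a.e., choose $0<\delta\le\delta_1$ small enough that $\PP\{\om:\perp(\bottomspaceDSunpert k\om,\topspaceDSunpert k\om)>10\delta\}>1-\xi$; this also pins down the constant $K(\delta)$ of Lemma~\ref{lem:ly}. Set $\tau=\min(\delta_1,\tfrac14\mu_k)$, so that $\mu_k-\tau>0$. Now apply Egorov's theorem to the four a.e.\ convergences listed above and to the Birkhoff average $\tfrac1n\sum_{i=0}^{n-1}\log(1+\|A_{\sigma^i\om}\|_\SHS)\to I$; intersecting the resulting set of measure $>1-\xi$ with the set from the previous sentence, we obtain a single $G$ with $\PP(G)>1-\xi$ and an $n_0$ beyond which all of these quantities lie within prescribed tolerances for every $\om\in G$.

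It then remains to read off (a)--(e) on $G$ for $n\ge n_0$. Item (a) is the defining property of $G$. Items (b) and (c) hold as soon as the two angles are below $\delta$, which we arranged. For (d), the bound $\big|\tfrac1n\log s_k(\cocycle n\om)-\mu_k\big|<\tau$ gives $e^{(\mu_k-\tau)n}<s_k(\cocycle n\om)<e^{(\mu_k+\tau)n}$; since $\mu_k-\tau>0$, enlarging $n_0$ if necessary makes $e^{(\mu_k-\tau)n}>K(\delta)$ for $n\ge n_0$, so $\max(K(\delta),e^{(\mu_k-\tau)n})=e^{(\mu_k-\tau)n}$; and since $\mu_{k+1}<0$, for $n\ge n_0$ we also have $s_{k+1}(\cocycle n\om)<1$. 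For (e), the Birkhoff average lies within $I$ of its limit, hence below $2I$. This is precisely the argument of \cite[Lemma~2.4]{FGTQ-cpam} in the matrix case, which transfers verbatim. The only genuinely nontrivial ingredient is the semi-invertible MET itself, in particular the convergence of the singular subspaces $\topspacemat k{\cocycle n\om}$ and $\bottomspacemat k{\cocycle n\om}$ to the Oseledets subspaces --- and this is where non-invertibility of the operators enters, since $\bottomspacemat k{\cocycle n\om}$ must be taken as a span of singular vectors rather than as a preimage; the rest is bookkeeping in the ordering of the choices of $\delta$, $\tau$, $n_0$ and the passage from pointwise to uniform convergence.
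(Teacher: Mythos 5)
Your overall strategy --- extract the a.e.\ statements from the semi-invertible MET and the Birkhoff theorem, upgrade them to uniform statements on a large-measure set, and fix the constants in the order $\delta$, then $\tau$, then $n_0$ --- is exactly the argument behind the result the paper relies on; the paper itself offers no proof beyond citing \cite[Lemma 2.4]{FGTQ-cpam}. Items (a), (c), (d) and (e) are handled correctly: the quantities involved ($\perp(\bottomspaceDSunpert k\om,\topspaceDSunpert k\om)$, $\angle(\bottomspacemat k{\cocycle n\om},\bottomspaceDSunpert k\om)$, $\tfrac1n\log s_j(\cocycle n\om)$ for $j=k,k+1$, and the Birkhoff average) are functions whose $n$-limits exist pointwise a.e.\ with fixed reference point $\om$, so Egorov applies; and your two auxiliary observations --- that $\perp>0$ a.e.\ (compactness of $V\cap S$ together with closedness of the complementary subspace) and that $\int\log(1+\|A_\om\|_\SHS)\,d\PP>0$ --- are both sound.

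The one step that does not work as written is the application of Egorov to item (b). The quantity $f_n(\om)=\angle(\topspacemat k{\cocycle n\om},\topspaceDSunpert k{\sigma^n\om})$ compares the image singular space with an Oseledets space at the \emph{moving} point $\sigma^n\om$. What the MET gives pointwise a.e.\ is the pulled-back statement $g_n(\om):=\angle(\topspacemat k{\cocycle n{\sigma^{-n}\om}},\topspaceDSunpert k\om)\to0$. Since $f_n=g_n\circ\sigma^n$ and $\sigma$ preserves $\PP$, one gets $\PP(f_n>\delta)=\PP(g_n>\delta)\to0$, i.e.\ convergence of $f_n$ to $0$ \emph{in probability}, but not a.e.\ convergence: a.e.\ convergence of $g_n$ does not survive composition with $\sigma^n$ (consider indicators of sets $B_n$ with $\PP(B_n)\to0$ whose preimages $\sigma^{-n}B_n$ cover a.e.\ point infinitely often). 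So Egorov is not available for (b). The repair is immediate, and is precisely why the lemma is phrased with a set $G$ that may depend on $n$: for each $n\ge n_0$, intersect the Egorov set serving (a), (c), (d), (e) with $\{f_n<\delta\}$, whose measure exceeds $1-\xi/5$ for all large $n$ by the in-probability convergence. While making that intersection, allocate $\xi/5$ (say) to each of the five conditions rather than $\xi$ to each; as written, intersecting two sets each of measure $>1-\xi$ only guarantees measure $>1-2\xi$.
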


Assume that $\epsilon>0$ is fixed. A perturbation $\Delta$ is said to be \emph{tame} if
$|\Delta_{s,t}|\le \epsilon^{-1/2}(\frac 23)^{s+t}$ for all $s,t$
(otherwise $\Delta$ is \emph{wild}). A quick calculation
shows that if $\Delta$ is tame, then $\|\ep\Delta\|_\HS<2\sqrt\ep$.

\begin{lem}[Good block length]\label{lem:goodPert}
  Let $\sigma: (\Omega,\PP) \circlearrowleft$ be an ergodic
  measure-preserving transformation.  Let $A: \Omega\to \mathcal
  B(H)$ be a measurable map, taking values in the bounded linear
  operators on $H$, such that $\log^+\|A(\om)\|_\op$ is integrable. There
  exists $C_{\ref{lem:goodPert}}>0$ such that for all $\eta_0>0$,
  there exists $\ep_0$ such
  that for all $\epsilon<\epsilon_0$, there exists $G\subseteq\Omega$
  of measure at least $1-\eta_0$ such that for all $\omega\in G$, if
  $(\Delta_n)\in \HS^\Z$ satisfies $\Delta_n$ is tame
  for each $0\le n<N$, then
\begin{equation*}
\|\cocyclepss N{\bar\om}-\cocycle N\om\|_\op\le1,
\end{equation*}
where $\bar\om=(\om,(\Delta_n))$, $N=\lfloor C_{\ref{lem:goodPert}}|\log\ep|\rfloor$ and
 $\cocyclepss N{\bar\om}= \pertA_{N-1}(\bar\omega)\dots
 \pertA_1(\bar\omega) \pertA_0(\bar\omega)$.

The probability that one of $\Delta_0,\ldots,\Delta_{N-1}$
is wild is $O(e^{-1/(2\epsilon)})$.
\end{lem}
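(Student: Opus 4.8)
The proof splits into the deterministic operator-norm bound and the probabilistic estimate on wildness, and I would handle them in that order. For the first, expand the difference of the two length-$N$ products by the telescoping identity
$$
\cocyclepss N{\bar\om}-\cocycle N\om=\sum_{k=0}^{N-1}\bigl(\pertA_{N-1}(\bar\om)\cdots\pertA_{k+1}(\bar\om)\bigr)(\ep\Delta_k)\bigl(A_{\sigma^{k-1}\om}\cdots A_\om\bigr),
$$
empty products being the identity. If $\Delta_k$ is tame for $0\le k<N$ then $\|\ep\Delta_k\|_\op\le\|\ep\Delta_k\|_\HS<2\sqrt\ep$, so with $b_j:=\|A_{\sigma^j\om}\|_\op$ we have $\|\pertA_j(\bar\om)\|_\op\le b_j+2\sqrt\ep\le 2+b_j$ (for $\ep<1$); bounding all factors this way and reinserting the missing $j=k$ one, the $k$-th summand has operator norm at most $2\sqrt\ep\prod_{j=0}^{N-1}(2+b_j)$. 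Summing over $k$, taking logarithms, and using $\log(2+b_j)\le\log2+f(\sigma^j\om)$ with $f(\om):=\log(1+\|A(\om)\|_\op)$, gives
$$
\log\|\cocyclepss N{\bar\om}-\cocycle N\om\|_\op\le\log(2N)+\tfrac12\log\ep+N\log2+\sum_{j=0}^{N-1}f(\sigma^j\om).
$$

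The function $f$ is $\PP$-integrable by hypothesis (since $\log(1+x)\le\log2+\log^+x$), so by Birkhoff's theorem $\tfrac1n\sum_{j<n}f\circ\sigma^j\to L':=\int f\,d\PP$ almost everywhere; by Egorov's theorem, for each $\eta_0>0$ there is a set $G=G(\eta_0)$ with $\PP(G)\ge 1-\eta_0$ and an $n_0=n_0(\eta_0)$ such that $\sum_{j<N}f(\sigma^j\om)\le N(L'+1)$ whenever $\om\in G$ and $N\ge n_0$. Inserting this and $N=\lfloor C_{\ref{lem:goodPert}}|\log\ep|\rfloor$ (so $\tfrac12\log\ep\le -N/(2C_{\ref{lem:goodPert}})$), the displayed bound becomes, for $\om\in G$,
$$
\log\|\cocyclepss N{\bar\om}-\cocycle N\om\|_\op\le\log(2N)+N\Bigl(\log2+L'+1-\tfrac1{2C_{\ref{lem:goodPert}}}\Bigr).
$$
Hence I would simply set $C_{\ref{lem:goodPert}}:=\bigl(2(\log2+L'+2)\bigr)^{-1}$, which is positive and depends only on the cocycle (as $L'\le\log2+\int\log^+\|A(\om)\|_\op\,d\PP<\infty$): the parenthesis is then $\le -1$, the right-hand side is $\log(2N)-N\le0$ as soon as $N\ge 2$, and it remains only to pick $\ep_0<1$ small enough that $N\ge\max(n_0,2)$ for all $\ep<\ep_0$. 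The one subtlety is the quantifier order ($C_{\ref{lem:goodPert}}$ named before $\eta_0$), which is legitimate precisely because the only property of $G$ used above is the estimate with the $\eta_0$-free constant $L'+1$, leaving $G$, $n_0$ and $\ep_0$ to be chosen afterwards.

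For the wildness estimate, write the $(s,t)$ entry of a perturbation as $3^{-(s+t)}Z_{s,t}$ with the $Z_{s,t}$ independent and standard normal; then tameness fails iff $|Z_{s,t}|>\ep^{-1/2}2^{s+t}$ for some $s,t\ge1$. Since this threshold exceeds $1$ for small $\ep$, the Gaussian tail bound gives probability at most $e^{-4^{s+t}/(2\ep)}$ for each $(s,t)$, and summing over $s,t\ge1$ — where the $s=t=1$ term $e^{-8/\ep}$ dominates — the probability that a single $\Delta_n$ is wild is $\le Ce^{-8/\ep}$. A union bound over the $N\le C_{\ref{lem:goodPert}}|\log\ep|$ indices $0\le n<N$ then bounds the probability that some $\Delta_n$ is wild by $C|\log\ep|\,e^{-8/\ep}$, which is $O(e^{-1/(2\ep)})$ (indeed much smaller), since $8/\ep$ overwhelms both $1/(2\ep)$ and the logarithmic factor. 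I do not anticipate any genuine analytic obstacle: the lemma is a telescoping estimate fed into the Birkhoff/Egorov machinery, with the quantifier bookkeeping noted above being the only point requiring care.
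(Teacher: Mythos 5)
Your proposal is correct and follows essentially the same route as the paper: telescope the difference of the two products, use tameness to bound each inserted perturbation by $2\sqrt\ep$ in operator norm, control the remaining factors via Birkhoff/Egorov applied to $\log(1+\|A_\om\|_\op)$ on a set of measure $1-\eta_0$, and choose $C_{\ref{lem:goodPert}}$ small enough (depending only on $\int\log(1+\|A_\om\|_\op)\,d\PP$, hence legitimately before $\eta_0$) that $\ep^{1/2}$ beats the exponential growth over a block of length $\lfloor C_{\ref{lem:goodPert}}|\log\ep|\rfloor$; the wildness probability is the same Gaussian tail plus union bound. The only differences are cosmetic (e.g.\ bounding factors by $2+\|A_{\sigma^j\om}\|_\op$ rather than $\exp(\log^+(\|A_{\sigma^j\om}\|_\op+1))$).
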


\begin{proof}
Let $g(\om)=\log^+(\|A_\om\|_\op+1)$ and let $C>0$ satisfy $\int g(\om)
\,d\PP(\om)<1/(2C)$.
Notice that provided $\epsilon<\frac14$ (and assuming that the perturbations
$(\Delta_n)_{0\le n<N}$ are tame,
so that $\|\epsilon\Delta_n\|_\op\le \|\epsilon\Delta_n\|_\HS\le
2\sqrt\ep$ for $0\le n<N$),
$\log^+ \|A_{\bar\sigma^n\bar\om }^\ep\|\leq g(\sigma^n\om)$
for each $0\le n<N$, and
\begin{align*}
\|\cocyclepss N\oom-\cocycle N\om\|_\op&\le\sum_{i=0}^{N-1}
\|\cocyclepss {N-i-1}{\bar\sigma^{i}\oom}
(\pertA_{\bar\sigma^i\oom}-A_{\sigma^i\om})
\cocycle i\om\|_\op\\
&\le 2N\sqrt\ep\exp(g(\om)+\ldots+g(\sigma^{N-1}\om)).
\end{align*}
There exists $n_0$ such that for $N\ge n_0$,
$g(\om)+\ldots+g(\sig^{N-1}\om)\le N/(2C)-\log(4N)$
on a set of measure at least $1-\eta_0$, hence
$2N\sqrt\ep\exp(g(\om)+\ldots+g(\sigma^{N-1}\om))
\le \frac 12\sqrt\ep \exp(N/(2C))$ on a set of measure at least $1-\eta_0$.
In particular, provided $\lfloor C|\log\ep_0|\rfloor>n_0$,
taking $N=\lfloor C|\log\ep|\rfloor$, we have
$\|\cocyclepss N{\bar\om}-\cocycle N\om\|_\op\le1$ provided that the
perturbations $\Delta_0,\ldots\Delta_{N-1}$ are all tame.

Recall that $(i,j)$th entry of $\Delta$ is distributed as $3^{-(i+j)}$ times
a standard normal random variable. Hence the probability that
$|\Delta_{i,j}|>\epsilon^{-1/2}(\frac23)^{i+j}$ is $\PP(|N|>\epsilon^{-1/2}2^{i+j})$.
Using a standard estimate on the tail of a normal random variable
\cite[Theorem 1.2.3]{Durrett},
this is at most $\frac{2\sqrt\epsilon}{\sqrt{2\pi}}
2^{-(i+j)}\exp(-2^{2i+2j-1}/\epsilon)$.

In particular, using the union bound, the probability that one
of $\Delta_0,\ldots,\Delta_{N-1}$ is wild is $O(e^{-1/(2\epsilon)})$.
\end{proof}

We comment that once $\xi>0$ and $\delta_1>0$ are fixed, Lemma \ref{lem:goodBlocks}
guarantees the existence of an $n_0$ such that for all sufficiently large $n$,
the good set defined in the lemma has measure at least $1-\xi$. Now
for $\epsilon$ sufficiently small, the length $N=\lfloor
C_{\ref{lem:goodPert}}|\log\epsilon|\rfloor$ exceeds $n_0$. For the remainder of the
proof, we let $G$ be the good set from Lemma \ref{lem:goodBlocks} with $n$ taken
to be $N$ (so that the good set, $G$, depends on $\xi$, $\delta_1$ and $\epsilon$,
but this dependence will not be made explicit). We further introduce the notation
$\bar G=G\cap \bigcap_{i=0}^{N-1}\{\Delta_i\text{ is tame}\}$, which we shall also
use for the remainder of the proof.

\begin{prop}[Glueing good blocks]\label{prop:GlueingGoodBlocks}
Under the assumptions of Lemma~\ref{lem:goodBlocks},
suppose $j<l$ and $\bar\sigma^{jN}\bar\omega,\bar\sigma^{(j+1)N}\bar\omega,
\ldots,\bar\sigma^{(l-1)N}\bar\omega\in \bar G$.
 Then,
\begin{equation}\label{eq:goodblox}
  \Xi_k(\cocyclepss{(l-j)N}\oom) \ge \Xi_k(\cocycle{(l-j)N}\om)+2(l-j)k\log\delta.
\end{equation}
\end{prop}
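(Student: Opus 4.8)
The plan is to prove \eqref{eq:goodblox} by induction on $l-j$, the number of consecutive good blocks being glued. The base case $l-j=1$ is the heart of the argument: one must show that a single tame-perturbed good block $G^\ep=\cocyclepss N{\bar\sigma^{jN}\bar\omega}$ satisfies $\Xi_k(G^\ep)\ge \Xi_k(G)+2k\log\delta$, where $G=\cocycle N{\sigma^{jN}\omega}$. Here I would invoke Lemma~\ref{lem:goodPert} (so that $\|G^\ep-G\|_\op\le 1$ since all the relevant $\Delta_i$ are tame and $\bar\sigma^{jN}\bar\omega\in\bar G$) together with Lemma~\ref{lem:goodBlocks}\eqref{it:svsep} (so that $s_k(G)>K(\delta)$ and $s_{k+1}(G)\le 1$), which places us exactly in the hypotheses of Lemma~\ref{lem:ly}. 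Part \eqref{it:pertsvs} of that lemma then gives $\Xi_k(G^\ep)\ge\Xi_k(G)-k\delta\ge\Xi_k(G)+k\log\delta$ (using $\delta<\frac12$ so $-\delta\ge\log\delta$ is false — actually one needs to be a little careful here, so the cleaner route for the base case is to go through part \eqref{it:det} directly rather than \eqref{it:pertsvs}).

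More precisely, for the inductive step, suppose the bound holds for $l-1-j$ consecutive good blocks and consider $l-j$ of them. Write $\cocyclepss{(l-j)N}{\bar\sigma^{jN}\bar\omega}=G^\ep\cdot P$, where $P=\cocyclepss{(l-1-j)N}{\bar\sigma^{(j+1)N}\bar\omega}$ is the product over the first $l-1-j$ good blocks and $G^\ep$ is the perturbed last block. Let $V$ be the $k$-dimensional span of the top $k$ right singular vectors of $P$, so that $\Xi_k(P)=\log|\det(P|_V)|$ where we measure the volume of the image $PV$. I would then argue that $PV$ is close to $\topspacemat k{\cocycle{(l-1-j)N}{\sigma^{(j+1)N}\omega}}$ (by an argument tracking the top space through the good blocks — using Lemma~\ref{lem:goodBlocks}\eqref{it:Econt} and the contraction from Lemma~\ref{lem:ly}\eqref{it:contr} applied at the last block of $P$, together with Lemma~\ref{lem:goodBlocks}\eqref{it:sep}), hence that $PV$ satisfies $\perp(PV,\bottomspacemat k{G})>2\delta$; this is where the separation hypothesis $\perp(\bottomspaceDSunpert k\cdot,\topspaceDSunpert k\cdot)>10\delta$ is consumed, leaving enough room after the $\angle<\delta$ errors from \eqref{it:Econt}, \eqref{it:Fcont}, and \eqref{it:spacecont}. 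Then Lemma~\ref{lem:ly}\eqref{it:det} applied to $G^\ep$ (with $A=G$, $B=G^\ep$) gives $|\det(G^\ep|_{PV})|\ge\delta^k\exp\Xi_k(G^\ep)$, and chaining,
\begin{align*}
\Xi_k(\cocyclepss{(l-j)N}{\bar\sigma^{jN}\bar\omega})
&\ge\log|\det((G^\ep P)|_V)|
=\log|\det(G^\ep|_{PV})|+\log|\det(P|_V)|\\
&\ge k\log\delta+\Xi_k(G^\ep)+\Xi_k(P).
\end{align*}
Applying the inductive hypothesis $\Xi_k(P)\ge\Xi_k(\cocycle{(l-1-j)N}\cdot)+2(l-1-j)k\log\delta$, the base-case bound $\Xi_k(G^\ep)\ge\Xi_k(G)+k\log\delta$, and sub-additivity of $\Xi_k$ in the form $\Xi_k(G)+\Xi_k(\cocycle{(l-1-j)N}\cdot)\ge\Xi_k(\cocycle{(l-j)N}\cdot)$ — wait, sub-additivity goes the other way, so instead I would use that the unperturbed product decomposes and that $\Xi_k$ is \emph{super}-additive along the fast space, or simply absorb the unperturbed side into the induction by running the same top-space-tracking argument for $A$ itself — yields the claim after collecting the $2(l-j)k\log\delta$ terms.

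The main obstacle I anticipate is the bookkeeping that shows $PV$ lands in the good cone $\perp(PV,\bottomspacemat kG)>2\delta$ at the start of the final block. This requires propagating the ``image of the top space stays near the unperturbed fast space'' property through an arbitrary number of glued blocks without the errors accumulating — i.e., one needs the contraction in Lemma~\ref{lem:ly}\eqref{it:contr} to act as a genuine attractor so that after even one good block the image is within $\delta$ of $\topspacemat k{\cocycle N\cdot}$, hence within $2\delta$ of the unperturbed Oseledets fast space by \eqref{it:Econt}, which (by the $10\delta$ separation in \eqref{it:sep}) is $\perp$-far from $\bottomspacemat kG$ at the next block. Making this a clean invariant that survives the induction — in particular checking it also for the very first block $P$ begins with, where $V$ is just the top right singular space of that block and no prior contraction has occurred — is the delicate point, and one handles it by noting $\Xi_k$ of a block restricted to the top right singular space is exactly $\Xi_k$ of the block, so the induction can be seeded with $P$ equal to a single good block and $V$ its own top right singular space. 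Everything else is a routine assembly of Lemmas~\ref{lem:ly}, \ref{lem:goodBlocks}, \ref{lem:goodPert} and the reverse triangle inequality for $\perp$.
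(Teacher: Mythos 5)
Your overall toolkit is the right one (Lemma~\ref{lem:ly} parts \eqref{it:pertsvs}, \eqref{it:contr}, \eqref{it:det}, Lemma~\ref{lem:goodBlocks}, Lemma~\ref{lem:goodPert}, and the reverse triangle inequality for $\perp$), and the final bookkeeping via sub-additivity is fine — indeed your worry there is misplaced: you need $\Xi_k(G)+\Xi_k(P^0)\ge\Xi_k(GP^0)$, which is exactly what sub-additivity gives. Likewise the base case is unproblematic: part \eqref{it:pertsvs} gives $\Xi_k(G^\ep)\ge\Xi_k(G)-k\delta$, and $-\delta\ge 2\log\delta$ for $\delta<\tfrac12$, so there is no need to reroute through \eqref{it:det}.

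The genuine gap is in the inductive step, at the point you yourself flag as delicate. You take $V$ to be the top right singular space of the \emph{perturbed multi-block product} $P$, and you need $\perp(PV,\bottomspacemat k{B_{l-1}})>2\delta$ to invoke Lemma~\ref{lem:ly}\eqref{it:det}. But $PV=\topspacemat kP$ is the top left singular space of a perturbed product of several $N$-blocks, and none of the lemmas you cite control that object: Lemma~\ref{lem:goodBlocks}\eqref{it:Econt} speaks only of single \emph{unperturbed} blocks, and Lemma~\ref{lem:ly} compares a single perturbed block to its unperturbed version. Controlling the singular spaces of perturbed multi-block products is precisely what this proposition (and the subsequent Lemma~\ref{lem:iteratedtopspacesclose}) are needed to establish, so invoking such control here is circular. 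Your closing remedy — push forward a fixed subspace from the first block and let the contraction of Lemma~\ref{lem:ly}\eqref{it:contr} act as an attractor — is exactly the paper's proof, but it is incompatible with your induction as structured, because your $V$ is re-chosen as the top singular space of the ever-larger product $P$ at each step, and your inductive hypothesis is only the scalar inequality \eqref{eq:goodblox}, which carries no subspace information forward. The paper instead fixes $V_j=\tilde V_j=\bottomspacemat k{B_j}^\perp$ once, defines $V_{n+1}=B_nV_n$ and $\tilde V_{n+1}=\tilde B_n\tilde V_n$, and runs the induction on the pair of invariants $\angle(V_n,\tilde V_n)<2\delta$ and $\perp(V_n,\bottomspacemat k{B_n}),\perp(\tilde V_n,\bottomspacemat k{B_n})>\delta$; the lower bound $\Xi_k(\tilde B_{l-1}\cdots\tilde B_j)\ge\sum_i\log|\det(\tilde B_i|_{\tilde V_i})|$ then replaces your exact identity $\Xi_k(P)=\log|\det(P|_V)|$, and each summand is bounded below by $\Xi_k(B_i)+2k\log\delta$ via \eqref{it:det} and \eqref{it:pertsvs}. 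To repair your write-up you should strengthen the inductive hypothesis to include the cone condition on the pushed-forward space, at which point your argument becomes the paper's.
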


\begin{proof}
Let $B_n=\cocycle N{\sigma^{nN}\omega}$ and $\tilde B_n=\cocyclepss N{\bar\sigma^{nN}\oom}$.
This is proved by induction using Lemma~\ref{lem:ly}. Recall that since
$B_n$ is a good block, $\|B_n - \tilde{B}_n\|\leq 1$ by Lemma \ref{lem:goodPert}. We let
$\tilde V_j=V_j=\bottomspacemat k{B_j}^\perp$ and define $V_{n+1}=B_nV_n$ and
$\tilde V_{n+1}=\tilde B_n\tilde V_n$.

We claim that the following hold, for each $n=j, j+1, \dots, l-1$:
\begin{enumerate}[(i)]
\item $\angle(V_{n},\tilde V_{n})<2\delta$; \label{it:ind1}
\item $\perp(V_n,\bottomspacemat k{B_n})>\delta$ and
$\perp(\tilde V_n,\bottomspacemat k{B_n})>\delta$.
\label{it:ind2}
\end{enumerate}

Item \eqref{it:ind1} and the first part of \eqref{it:ind2} hold immediately
for the case $n=j$.
The second part of \eqref{it:ind2} holds because $\tilde V_j=V_j=
\bottomspacemat k{B_j}^\perp$ and
$\angle(\bottomspacemat k{B_j},\bottomspacemat k{\tilde B_j})<\delta$
 by Lemma \ref{lem:ly}\eqref{it:spacecont}.

Given that \eqref{it:ind1} and \eqref{it:ind2} hold for $n=m$, $B_m$ is
a good block and $\tilde{B}_m$ is a good perturbation,
Lemma~\ref{lem:ly}\eqref{it:contr}
implies that $\angle(V_{m+1},\topspacemat k{B_{m}})<\delta$,
$\angle(\tilde V_{m+1},\topspacemat k{B_{m}})<\delta$
so that $\angle(\tilde V_{m+1},V_{m+1})<2\delta$, yielding
\eqref{it:ind1} for $n=m+1$.

Making use of the induction hypothesis and Lemma~\ref{lem:goodBlocks}, we have that
$\angle(\topspaceDSunpert k{\sigma^{(m+1)N}\omega},
\topspacemat k{B_{m}})<\delta, \angle(\bottomspaceDSunpert k{\sigma^{(m+1)N}\omega},
\bottomspacemat k{B_{m}})<\delta$ and $\perp(\topspaceDSunpert k{\sigma^{(m+1)N}\omega},
\bottomspaceDSunpert k{\sigma^{(m+1)N}\omega)}>10\delta$.
Thus, we obtain \eqref{it:ind2} for $n=m+1$.

Hence using Lemma \ref{lem:ly}\eqref{it:det},
we see that $\log|\text{det}(\tilde B_n|_{\tilde V_n})|\ge
k\log \delta+\Xi_k(\tilde B_n)\ge k\log\delta-k\delta+\Xi_k(B_n)\ge\Xi_k(B_n)+2k\log\delta$,
where we made use of Lemma \ref{lem:ly}\eqref{it:pertsvs} for the second inequality.

Since $\Xi_k(\tilde B_{l-1}\cdots \tilde B_j)\ge
\sum_{i=j}^{l-1}\log|\det(\tilde B_i|\tilde V_i)|$,
summing yields
\begin{equation}\label{eq:pertLB}
\begin{split}
\Xi_k(\tilde B_{l-1}\cdots \tilde B_j)
&\ge 2(l-j)k\log\delta+\sum_{i=j}^{l-1}\Xi_k(B_i)\\
&\ge 2(l-j)k\log\delta+\Xi_k(B_{l-1}\cdots B_j),
\end{split}
\end{equation}
as required.
\end{proof}

\begin{lem}\label{lem:iteratedtopspacesclose}
Let the Hilbert-Schmidt cocycle, $A\colon\Omega\to\HS$ and all
parameters and perturbations
be as above. If $\bar\sigma^{iN}\omega\in\bar G$ for each $0\le i<n$,
then $\angle\left(
\bottomspacemat k {\cocyclepss{nN}\oom},
\bottomspacemat k {\cocycle{N}{\om}}\right)<\delta$.
\end{lem}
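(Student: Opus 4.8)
The plan is to exploit the bottom-space $=$ orthogonal-complement-of-fast-space duality together with the inductive apparatus already built in Proposition~\ref{prop:GlueingGoodBlocks}, and the fact that $\bottomspacemat k{C}^\perp$ depends only on which $k$-dimensional subspace gets mapped to the top singular directions. First I would note that $\bottomspacemat k{\cocyclepss{nN}\oom}^\perp$ is, by definition, the span of the top $k$ right singular vectors of the full product $\tilde B_{n-1}\cdots\tilde B_0$; but since $\tilde B_{n-1}\cdots\tilde B_0$ equals $\tilde B_{n-1}\cdots\tilde B_1$ composed with $\tilde B_0$, and each $\tilde B_m$ is a good perturbation strongly expanding on $\topspacemat k{B_m}$-directions, the top right singular space of the product is governed entirely by the first block $\tilde B_0$: it is exactly $\bottomspacemat k{\tilde B_0}^\perp$ provided the later blocks do not degrade transversality. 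More precisely, I claim $\bottomspacemat k{\cocyclepss{nN}\oom}^\perp = \bottomspacemat k{\tilde B_0}^\perp$ (or at least lies within $\delta$ of it), because $\tilde B_{n-1}\cdots\tilde B_1$ restricted to the image $\tilde B_0(\bottomspacemat k{\tilde B_0}^\perp)$ is a bijection onto a $k$-dimensional space on which the product is uniformly large, while on the complementary directions it stays bounded by Lemma~\ref{lem:ly}\eqref{it:pertsvs} applied blockwise.

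The cleanest route is: (1) By Lemma~\ref{lem:ly}\eqref{it:spacecont} applied to the pair $B_0=\cocycle N\om$ and $\tilde B_0=\cocyclepss N{\oom}$ (which satisfy $\|\tilde B_0-B_0\|_\op\le1$ on $\bar G$), we get $\angle(\bottomspacemat k{\tilde B_0},\bottomspacemat k{B_0})<\delta$, i.e.\ $\angle(\bottomspacemat k{\tilde B_0}, \bottomspacemat k{\cocycle N\om})<\delta$. So it suffices to show $\angle(\bottomspacemat k{\cocyclepss{nN}\oom}, \bottomspacemat k{\tilde B_0})$ is small — and in fact I would aim to show these two spaces are \emph{equal}. (2) To see that, set $V=\bottomspacemat k{\tilde B_0}^\perp$ and run the chain $\tilde V_0=V$, $\tilde V_{m+1}=\tilde B_m\tilde V_m$ exactly as in Proposition~\ref{prop:GlueingGoodBlocks}; the induction there gives $\perp(\tilde V_m,\bottomspacemat k{B_m})>\delta$ for each $m<n$, hence by Lemma~\ref{lem:ly}\eqref{it:det} the product $\cocyclepss{nN}\oom=\tilde B_{n-1}\cdots\tilde B_0$ has $k$-dimensional determinant on $V$ bounded below by $\delta^{kn}\exp\Xi_k$ of the product of blocks, which by Lemma~\ref{lem:ly}\eqref{it:pertsvs} is within a bounded multiple of $\exp\Xi_k(\cocyclepss{nN}\oom)$ for $n$ fixed — in particular it is strictly the top $k$-volume. (3) On the orthogonal complement $V^\perp=\bottomspacemat k{\tilde B_0}$, the first block $\tilde B_0$ has norm $\le s_{k+1}(\tilde B_0)\le 2$ (Lemma~\ref{lem:ly}\eqref{it:pertsvs}), and each subsequent good block multiplies norms by at most $e^{(\mu_k+\tau)N}$-type factors that are dominated by the genuine top-$k$ growth along $V$; comparing, the restriction of $\cocyclepss{nN}\oom$ to $V^\perp$ is strictly smaller in every singular value than its restriction to $V$. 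That forces the top $k$ right singular vectors to lie in $V$, i.e.\ $\bottomspacemat k{\cocyclepss{nN}\oom}^\perp=V=\bottomspacemat k{\tilde B_0}^\perp$, so $\bottomspacemat k{\cocyclepss{nN}\oom}=\bottomspacemat k{\tilde B_0}$, and combining with step (1) gives the claimed inequality $\angle(\bottomspacemat k{\cocyclepss{nN}\oom}, \bottomspacemat k{\cocycle N\om})<\delta$.

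The main obstacle is step (3): making rigorous the comparison ``growth along $V$ beats growth along $V^\perp$'' when the spaces $\tilde V_m$ are not exactly orthogonal to $\bottomspacemat k{B_m}$ but only $\delta$-transverse, so that a little bit of the big expansion can leak into the $V^\perp$ directions through the non-orthogonality of successive blocks. I would handle this the same way Lemma~\ref{lem:ly}\eqref{it:contr}--\eqref{it:det} do: track the decomposition $v=u+w$ with $u\in\bottomspacemat k{B_m}$, $w\in\bottomspacemat k{B_m}^\perp$ at each step, use $\|w\|\ge\delta\|v\|$ from the $\perp$ bound to get $\|\tilde B_m v\|\ge K\delta\|v\|$ with $K=K(\delta)$ huge, and note that the contribution coming into $\tilde V_{m+1}$ from any vector starting in $V^\perp$ can never exceed a fixed geometric rate $e^{(\mu_k+\tau)N}$ per block times the bounded factor $s_{k+1}(\tilde B_0)\le2$ from the first block — and $K(\delta)>\delta^{-(4k+3)}$ is chosen large enough that $K\delta$ dominates $e^{(\mu_k+\tau)N}$ after we fix $\delta$ and $N$. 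The bookkeeping is essentially a finite-$n$ version of the estimates already carried out in Proposition~\ref{prop:GlueingGoodBlocks}, so no genuinely new idea is required; the only care needed is to keep the accumulated angular error under $\delta$ across the $n$ blocks, which is exactly the content of the induction \eqref{it:ind1}--\eqref{it:ind2} there.
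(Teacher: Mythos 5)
Your overall strategy (show that the top $k$ right singular space of the perturbed product is pinned down by the first block, via the induction of Proposition~\ref{prop:GlueingGoodBlocks}) is the right instinct, but step (3) contains the essential gap, and it is exactly the point where the paper's proof does something you have not supplied. To conclude that a vector $v$ transverse to the slow space of the first block is \emph{not} in $\bottomspacemat k{\cocyclepss{nN}\oom}$, it is not enough to know that $\|\cocyclepss{nN}\oom v\|$ is large; you must compare it against $s_{k+1}(\cocyclepss{nN}\oom)$, and you have no upper bound on that quantity. Your proposed substitute --- ``each subsequent good block multiplies norms on the complementary directions by at most $e^{(\mu_k+\tau)N}$-type factors'' --- is false: after $\tilde B_0$ compresses $\bottomspacemat k{\tilde B_0}$ to norm $\le 2$, the residue can land in the fast directions of the remaining blocks and be amplified at rate up to $s_1(\tilde B_m)\sim e^{(\mu_1+\tau)N}$ per block, so for $\mu_1>\mu_k$ and large $n$ the growth emanating from $V^\perp$ can \emph{exceed} the guaranteed growth $(\delta^3K(\delta))^n$ along $V$. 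The paper closes this gap differently: it bounds $s_{k+1}(\cocyclepss{nN}\oom)\le\delta^{-4nk}$ by playing the sub-additivity $\Xi_{k+1}(\cocyclepss{nN}\oom)\le\sum_i\Xi_{k+1}(\cocyclepss N{\bar\sigma^{iN}\oom})$ against the super-additive lower bound $\Xi_k(\cocyclepss{nN}\oom)\ge\sum_i\Xi_k(\cocycle N{\sigma^{iN}\om})+2nk\log\delta$ from \eqref{eq:pertLB}, together with $s_{k+1}=\exp(\Xi_{k+1}-\Xi_k)$ and Lemma~\ref{lem:ly}\eqref{it:pertsvs}; the choice $K(\delta)>\delta^{-(4k+3)}$ is then precisely what makes $(\delta^3K(\delta))^n$ beat $\delta^{-4nk}$. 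Without some version of this $s_{k+1}$ estimate your comparison cannot be completed.

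Two further problems. First, your target of \emph{exact} equality $\bottomspacemat k{\cocyclepss{nN}\oom}=\bottomspacemat k{\tilde B_0}$ is unattainable: even when growth on $V$ strictly dominates growth on $V^\perp$, the top right singular vectors of a product only lie \emph{near} $V$, not in it (already for $2\times 2$ matrices the top right singular vector of $B_1B_0$ is generically not that of $B_0$). Second, routing the estimate through $\bottomspacemat k{\tilde B_0}$ costs you a triangle inequality: step (1) already spends the full budget $\delta$ on $\angle(\bottomspacemat k{\tilde B_0},\bottomspacemat k{B_0})<\delta$, so any additional positive discrepancy from your step (3) would only yield a bound like $2\delta$, not the stated $\delta$. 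The paper avoids this by measuring transversality of $v$ directly against $\bottomspacemat k{\cocycle N\om}$ (the unperturbed first block) and never passing through $\bottomspacemat k{\tilde B_0}$.
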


\begin{proof}
By the first part of \eqref{eq:pertLB}, $\Xi_k(\cocyclepss{nN}{\oom})>
\sum_{i=0}^{n-1}\Xi_k(\cocycle N{\sigma^{iN}\omega})+2nk\log\delta$.
Also, $\Xi_{k+1}(\cocyclepss{nN}\oom)\le
\sum_{i=0}^{n-1}\Xi_{k+1}(\cocyclepss{N}{\bar\sigma^{iN}\oom})
\le \sum_{i=0}^{n-1}\Xi_k(\cocyclepss{N}{\bar\sigma^{iN}\oom})+n\log 2
\le \sum_{i=0}^{n-1}\Xi_k(\cocycle N{\sigma^{iN}\om})+n\log 2+nk\delta
\le \sum_{i=0}^{n-1}\Xi_k(\cocycle N{\sigma^{iN}\om})-2nk\log\delta$
by Lemma \ref{lem:ly}\eqref{it:pertsvs}.
Since we have $\log s_{k+1}(\cocyclepss{nN}\oom)=\Xi_{k+1}(\cocyclepss{nN}\oom)
-\Xi_k(\cocyclepss{nN}\oom)$, we deduce $s_{k+1}(\cocyclepss{nN}\oom)
\le \delta^{-4nk}$.

On the other hand, if $v\in S$ is such that ${\perp}(v,\bottomspacemat k{
\cocycle N\om})>\delta$, an inductive argument exactly like the proof of Proposition
\ref{prop:GlueingGoodBlocks} shows that
$\|\cocyclepss{nN}\oom v\|
\ge (\frac\delta 3)^ne^{-n\delta}\prod_{i=0}^{n-1}s_k(\cocycle{nN}\om)
\ge (\delta^3K(\delta))^n$. The choice of $K(\delta)$ in Lemma
\ref{lem:ly} ensures $\|\cocyclepss{nN}\oom v\|>
s_{k+1}(\cocyclepss{nN}\oom)$, so that $v\not\in\bottomspacemat k
{\cocyclepss{nN}\oom}$.
\end{proof}

\begin{prop}\label{prop:almostXi}
Let $\omega$ be such that $\bar\sigma^{iN}\oom\in \bar G$ for $0\le i<n$.
Then for any $V$ such that ${\perp}(V,\bottomspacemat k{\cocycle N\om})>2\delta$,
one has $\log|\det(\cocyclepss{nN}\oom|_V)|\ge \Xi_k(\cocyclepss{nN}\oom)
+k\log\delta$.
\end{prop}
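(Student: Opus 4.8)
The plan is to run essentially the same inductive argument as in Proposition~\ref{prop:GlueingGoodBlocks}, but tracking a single $k$-dimensional subspace $V$ rather than the canonical bottom-complement, and then to convert the resulting lower bound on $\|\cocyclepss{nN}\oom v\|$ for $v \in V$ into a statement about $\log|\det(\cocyclepss{nN}\oom|_V)|$. Write $\tilde B_i = \cocyclepss N{\bar\sigma^{iN}\oom}$ and $B_i = \cocycle N{\sigma^{iN}\om}$, set $V_0 = V$ and $V_{i+1} = \tilde B_i V_i$. The hypothesis ${\perp}(V, \bottomspacemat k{B_0}) > 2\delta$ together with Lemma~\ref{lem:ly}\eqref{it:spacecont} (which gives $\angle(\bottomspacemat k{B_0}, \bottomspacemat k{\tilde B_0}) < \delta$) and the reverse triangle inequality for ${\perp}$ yields ${\perp}(V_0, \bottomspacemat k{\tilde B_0}) > \delta$, which is the hypothesis needed for Lemma~\ref{lem:ly}\eqref{it:contr} and \eqref{it:det} applied to $\tilde B_0$.

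First I would establish, by induction on $i$, that ${\perp}(V_i, \bottomspacemat k{B_i}) > 2\delta$ — hence also ${\perp}(V_i, \bottomspacemat k{\tilde B_i}) > \delta$ — for each $0 \le i < n$. The base case is the hypothesis. For the inductive step, Lemma~\ref{lem:ly}\eqref{it:contr} applied to $\tilde B_i$ gives $\angle(V_{i+1}, \topspacemat k{B_i}) < \delta$; combining with the good-block facts from Lemma~\ref{lem:goodBlocks} ($\angle(\topspacemat k{B_i}, \topspaceDSunpert k{\sigma^{(i+1)N}\om}) < \delta$, the analogous estimate for bottom spaces, and the separation ${\perp}(\topspaceDSunpert k{\sigma^{(i+1)N}\om}, \bottomspaceDSunpert k{\sigma^{(i+1)N}\om}) > 10\delta$) together with $\angle(\bottomspaceDSunpert k{\sigma^{(i+1)N}\om}, \bottomspacemat k{B_{i+1}}) < \delta$ and the reverse triangle inequality gives ${\perp}(V_{i+1}, \bottomspacemat k{B_{i+1}}) > 10\delta - 3\delta > 2\delta$, closing the induction. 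This is exactly the transversality bookkeeping in Proposition~\ref{prop:GlueingGoodBlocks}; the only difference is we never needed $V_i$ to be close to the unperturbed fast space.

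Next, at each step Lemma~\ref{lem:ly}\eqref{it:det} applied to $\tilde B_i$ on the subspace $V_i$ gives $\log|\det(\tilde B_i|_{V_i})| \ge \Xi_k(\tilde B_i) + k\log\delta$. Telescoping, $\log|\det(\cocyclepss{nN}\oom|_V)| = \sum_{i=0}^{n-1} \log|\det(\tilde B_i|_{V_i})| \ge \sum_{i=0}^{n-1}\Xi_k(\tilde B_i) + nk\log\delta$. On the other hand, by subadditivity of $\Xi_k$, $\Xi_k(\cocyclepss{nN}\oom) \le \sum_{i=0}^{n-1}\Xi_k(\tilde B_i)$. Subtracting, $\log|\det(\cocyclepss{nN}\oom|_V)| \ge \Xi_k(\cocyclepss{nN}\oom) + nk\log\delta$. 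This is slightly weaker than the claimed bound ($nk\log\delta$ versus $k\log\delta$), so the final point is to recover the loss: replace the crude per-block estimate by the cumulative argument used in Proposition~\ref{prop:GlueingGoodBlocks} and Lemma~\ref{lem:iteratedtopspacesclose}, namely first apply Lemma~\ref{lem:ly}\eqref{it:det} only to the \emph{last} block acting on $V_{n-1}$ (giving a factor $\delta^k$) while for the earlier blocks use that $\tilde B_i$ maps $V_i$ into a cone of aperture $\delta$ around $\topspacemat k{B_i}$ on which $\tilde B_i$ expands $k$-volume by at least $\exp(\Xi_k(\tilde B_i))$ up to the $e^{-k\delta}$ distortion from Lemma~\ref{lem:ly}\eqref{it:pertsvs}, and telescope those against the subadditive upper bound for $\Xi_k(\cocyclepss{(n-1)N}{\bar\sigma^N\oom})$; the single $\delta^k$ and the summable $e^{-nk\delta}$ distortion are absorbed into the constant governing $\delta$ in Lemma~\ref{lem:goodBlocks}.

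The main obstacle is precisely this last bookkeeping step: getting the error down from $nk\log\delta$ to a single $k\log\delta$ requires that the intermediate images $V_i$ not only stay transverse to the bottom spaces but actually concentrate on a fixed cone around the Oseledets fast space, so that the product of the per-step $k$-volume expansions matches $\Xi_k$ of the product and not merely its subadditive upper bound. This is the same mechanism that made Lemma~\ref{lem:iteratedtopspacesclose} go through, and I expect the estimate to follow by citing that argument almost verbatim, with the observation that the only role of $V$ being the canonical complement in that lemma was to start the induction — which here is supplied directly by the hypothesis ${\perp}(V,\bottomspacemat k{\cocycle N\om}) > 2\delta$.
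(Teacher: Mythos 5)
Your inductive bookkeeping (propagating ${\perp}(V_i,\bottomspacemat k{B_i})>2\delta$ along the blocks and applying Lemma~\ref{lem:ly}\eqref{it:det} blockwise) is sound, but, as you yourself note, it only yields $\log|\det(\cocyclepss{nN}\oom|_V)|\ge \Xi_k(\cocyclepss{nN}\oom)+nk\log\delta$, and the patch you propose to remove the factor of $n$ does not work. The per-block loss in Lemma~\ref{lem:ly}\eqref{it:det} is genuinely $\delta^k$, not $e^{-k\delta}$: the determinant of $\tilde B_i$ restricted to $V_i$ is controlled by $\exp\Xi_k(\tilde B_i)\cdot{\perp}(\bottomspacemat k{\tilde B_i},V_i)^k$, and the transversality you can propagate is only ${\perp}>\delta$ (the separation in Lemma~\ref{lem:goodBlocks}\eqref{it:sep} is $10\delta$, not $1-O(\delta)$), so there is no ``$e^{-k\delta}$ distortion'' available from Lemma~\ref{lem:ly}\eqref{it:pertsvs}, which concerns singular values of $A$ versus $B$, not determinants on subspaces. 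Even granting your distortion claim, an accumulated factor $e^{-nk\delta}$ still grows linearly in $n$ in the exponent and is not ``absorbed'' into a single $k\log\delta$. More structurally, any telescoping of per-block determinant bounds lands you at $\sum_i\Xi_k(\tilde B_i)+(\text{losses})$, and sub-additivity only relates this to $\Xi_k(\cocyclepss{nN}\oom)$ in the direction that preserves the $n$-dependent loss.

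The intended argument avoids the block decomposition entirely at this stage: apply the determinant decomposition \emph{once} to the full product $T=\cocyclepss{nN}\oom$, exactly as in Lemma~\ref{lem:ly}\eqref{it:det}, to get
\begin{equation*}
\log|\det(T|_V)|\ \ge\ \Xi_k(T)+\Xi_k\bigl(\Pi_{\bottomspacemat kT^\perp}\Pi_V\bigr)\ \ge\ \Xi_k(T)+k\log{\perp}(\bottomspacemat kT,V),
\end{equation*}
using Lemma~\ref{lem:perp} for the last step. The only remaining point is to bound ${\perp}(\bottomspacemat kT,V)$ from below by $\delta$, and this is precisely what Lemma~\ref{lem:iteratedtopspacesclose} supplies: $\angle(\bottomspacemat k{\cocyclepss{nN}\oom},\bottomspacemat k{\cocycle N\om})<\delta$, so the reverse triangle inequality and the hypothesis ${\perp}(V,\bottomspacemat k{\cocycle N\om})>2\delta$ give ${\perp}(\bottomspacemat kT,V)>\delta$. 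You cite Lemma~\ref{lem:iteratedtopspacesclose} only as an analogy for the inductive mechanism; the proof actually needs its \emph{conclusion}, which is where all the uniform-in-$n$ content lives. With that substitution your write-up collapses to a few lines and gives the stated single $k\log\delta$.
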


\begin{proof}
We argue as in Lemma \ref{lem:ly}\eqref{it:det}.
\begin{align*}
\log|\det(\cocyclepss{nN}\oom|_V)|&\ge
\Xi_k(\Pi_{\topspacemat k{\cocyclepss{nN}\oom}} \cocyclepss{nN}\oom\Pi_V)\\
&=\Xi_k(\cocyclepss{nN}\oom\Pi_{\bottomspacemat k{\cocyclepss{nN}\oom}^\perp}
\Pi_V)\\
&=\Xi_k(\cocyclepss{nN}\oom\Pi_{\bottomspacemat k{\cocyclepss{nN}\oom}^\perp})
+\Xi_k(\Pi_{\bottomspacemat k{\cocyclepss{nN}\oom}^\perp}
\Pi_V) \\
&\ge \Xi_k(\cocyclepss{nN}\oom)+
k\log{\perp}(\bottomspacemat k{\cocyclepss{nN}\oom},V),
\end{align*}
where we used Lemma \ref{lem:perp} for the last line. Lemma
\ref{lem:iteratedtopspacesclose}
and the triangle inequality  allow us to conclude.
\end{proof}

\section{Comparing perturbed and unperturbed bad blocks (Type I)}
\label{sec:badPertI}

We distinguish two ways in which a block can be bad: types I and II.
A \emph{type I bad block} is one where the unperturbed cocycle has
bad properties. On the other hand, a \emph{type II bad block} is one
where the unperturbed cocycle is well-behaved, but the perturbations
are wild.

Conditional on being in a type I bad block, the perturbations are
unconstrained, whereas conditional on being in a type II bad block
at least one perturbation is constrained to be large. For later use with
the type II bad blocks, we state some of the lemmas when one is
conditioned to be in a high probability event (but the high
probability event will be taken to be the whole space when dealing
with type I blocks.)

\begin{lem}\label{lem:randomnessdoesnthurt}
Let $k>0$. There exists a $C>0$ with the following property.
Let $T$ be a multi-variate normal Hilbert-Schmidt-valued random operator
whose entries have mean 0,
let $A\in\HS$ and let
$\Pi$ and $\Pi'$ be orthogonal projections onto $k$-dimensional subspaces of $H$.
Then for any subset $Q$ of $\HS$ such that
$\PP(T\in Q)\ge \frac 12$,
one has
$$
\E \Big( \big(\Xi_k(\Pi(A+T)\Pi') - \Xi_k(\Pi A\Pi')\big)^-
\Big| T\in Q\Big)
\ge -C,
$$
where $x^-$ denotes $\min(x,0)$. 
\end{lem}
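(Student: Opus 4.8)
The plan is to reduce everything to a statement about the determinant of a $k\times k$ Gaussian matrix, exactly as in the proof of Lemma~\ref{lem:PS}. Fix the bases $u_1,\dots,u_k$ spanning the range of $\Pi$ and $v_1,\dots,v_k$ spanning the range of $\Pi'$. Then $\Xi_k(\Pi(A+T)\Pi') = \log|\det M|$, where $M$ is the $k\times k$ matrix with entries $M_{ij} = \langle u_i, (A+T)v_j\rangle = a_{ij} + t_{ij}$, with $a_{ij}=\langle u_i, Av_j\rangle$ deterministic and $t_{ij}=\langle u_i, Tv_j\rangle$ jointly Gaussian with mean $0$. Likewise $\Xi_k(\Pi A\Pi') = \log|\det(a_{ij})|$. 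So we must show $\E\big((\log|\det(a+t)| - \log|\det a|)^- \mid T\in Q\big) \ge -C$, i.e. an upper bound on $\E\big((\log|\det a| - \log|\det(a+t)|)^+ \mid T\in Q\big)$, or equivalently (dropping the $x\mapsto x^+$, which only helps) a lower bound on $\E(\log|\det(a+t)| \mid T\in Q)$ that is not worse than $\log|\det a| - C$, uniformly over $a$ and over the Gaussian law of $t$. Note the covariance structure of $(t_{ij})$ is completely unconstrained here (unlike in Lemma~\ref{lem:PS}), so the argument must not depend on it.

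The key step is therefore a Gaussian anticoncentration / small-ball estimate for $\log|\det(a+t)|$ that is uniform in the mean shift $a$ and in the covariance. First I would reduce the conditioning: since $\PP(T\in Q)\ge \tfrac12$, for any nonnegative $g$ one has $\E(g(t)\mid T\in Q) \le 2\,\E g(t)$, so it suffices to bound $\E\big((\log|\det a| - \log|\det(a+t)|)^+\big) \le C$ with respect to the unconditioned Gaussian law. Now the crucial claim is that there is a constant $c_k>0$ such that for every deterministic $k\times k$ matrix $a$ and every centered Gaussian vector $t\in\R^{k\times k}$,
$$
\PP\big(|\det(a+t)| \le s\,|\det a|\big) \le c_k\, s^{1/k} \quad\text{for all } s\in(0,1],
$$
or some comparable bound of the form $\PP(|\det(a+t)|\le u) \le c_k (u/|\det a|)^{c}$. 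Granting such a bound, $X := (\log|\det a| - \log|\det(a+t)|)^+$ satisfies $\PP(X \ge \log(1/s)) = \PP(|\det(a+t)| \le s|\det a|) \le c_k s^{1/k}$, so $\PP(X\ge \lambda)\le c_k e^{-\lambda/k}$, which is integrable and gives $\E X \le C_k$. (If $\det a = 0$ the inequality to be proved is trivial since then $\log|\det a| = -\infty$ and $\log|\det(a+t)|$ is a.s.\ finite.)

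To prove the small-ball claim, I would argue by induction on $k$ via cofactor expansion, or more cleanly by conditioning. Write $a+t$ row by row; condition on all rows but the last. The last row is $r_k + w$ where $w$ is Gaussian (possibly degenerate, but its law given the other rows is still Gaussian) and $\det(a+t)$ is an affine function of $w$, namely $\det(a+t) = \langle w + r_k, n\rangle$ where $n$ is the vector of signed cofactors of the last row, determined by the first $k-1$ rows. If the first $k-1$ rows of $a+t$ are linearly independent then $n\ne 0$, and $\langle w, n\rangle$ is a one-dimensional Gaussian; its density is bounded provided its variance is bounded below, which is where degeneracy of $T$'s covariance is the concern. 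To handle a covariance that may be nearly degenerate, I would instead peel off the randomness differently: decompose $t = $ (its projection onto a fixed generic line in $\R^{k\times k}$ direction) $+$ (remainder), but this still can fail if $T\equiv 0$ in some coordinates. The honest fix is: the statement must actually be true even when $T$ has rank-deficient covariance — but then the bound can only use the randomness that is present. I expect the clean route is: one does not get a power-law small-ball bound uniformly; instead one uses that $\log|\det(a+t)|$, as a function of the finitely many independent standard normals parametrizing $t$ (say $t = \sum_\ell g_\ell B_\ell$ for fixed matrices $B_\ell$ and i.i.d.\ standard normals $g_\ell$), is a fixed (independent of $a$ up to the value $\det a$) polylogarithmic function whose negative part has a Gaussian tail — but the tail constant could depend on the $B_\ell$.

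The main obstacle, then, is getting the bound \emph{uniform in the covariance of $T$}, and I believe the resolution the authors intend is the following softer argument, which I would adopt. Use the multiplicativity trick from Lemma~\ref{lem:PS} in reverse: factor $a = \Pi A\Pi'$ and note $\log|\det M| = \sum_i \log s_i(M)$ where $s_i$ are singular values of the $k\times k$ matrix $M=a+t$. It suffices to control each $\E(\log s_i(a+t))^-$ from below by $\log s_i(a) - C$ — and in fact it suffices to handle the smallest singular value, $\E\big(\log s_k(a+t) - \log s_k(a)\big)^- \ge -C$, since $s_k(a)\le s_k(a+t)+\|t\|$ is not directly the right comparison. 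Concretely: $s_k(a+t) \ge s_k(a) - \|t\|_{\op}$, so on the event $\|t\|_{\op}\le \tfrac12 s_k(a)$ we get $\log s_k(a+t) \ge \log s_k(a) - \log 2$, contributing nothing negative beyond a constant; while on the complementary event one needs the small-ball bound only for $s_k(a+t)$ conditioned on $\|t\|_{\op}$ being large, where the Gaussian has substantial spread and the one-dimensional density bound goes through. I would carry this out: (1) reduce to unconditioned expectation via the factor-$2$ bound; (2) split on $\{\|t\|_\op \le \tfrac12 s_k(a)\}$ versus its complement; (3) on the good event bound trivially; (4) on the bad event use that $s_k(a+t)\ge |\det(a+t)|/\prod_{i<k}s_i(a+t)$ together with the cofactor-conditioning small-ball estimate for $|\det(a+t)|$, absorbing $\log\prod_{i<k}s_i(a+t) \le (k-1)\log\|a+t\|_\op$ whose positive part has at most a Gaussian tail; (5) assemble. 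The delicate point throughout — and the step I'd flag as the real content — is arranging the conditioning in step (4) so that the relevant one-dimensional Gaussian whose density bounds the small-ball probability has variance bounded below by a constant multiple of (something known), which is exactly the place where one exploits that we are on the event $\|t\|_\op$ is \emph{large}, forcing genuine randomness into the picture regardless of the shape of $T$'s covariance.
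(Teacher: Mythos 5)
Your reduction to a $k\times k$ Gaussian determinant, the factor-of-$2$ treatment of the conditioning on $Q$, and the identification of the crux as an anticoncentration statement for $\log|\det(a+t)|$ relative to $\log|\det a|$ all match the opening of the paper's proof. But the proposal then stops short of the actual content: the small-ball inequality $\PP(|\det(a+t)|\le s|\det a|)\le c_k s^{1/k}$ is asserted and used, yet never established, and the two routes you sketch for it do not close. The row-by-row cofactor conditioning fails exactly where you flag it (the conditional one-dimensional Gaussian $\langle w,n\rangle$ may have arbitrarily small, or zero, variance when the covariance of $T$ is degenerate, so no uniform density bound is available), and the fallback via $s_k(a+t)$ and the event $\{\|t\|_\op>\tfrac12 s_k(a)\}$ does not repair this: conditioning on $\|t\|_\op$ being large destroys Gaussianity and still gives no lower bound on the variance of the determinant \emph{in the one direction that matters}. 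So as written there is a genuine gap at the step you yourself identify as ``the real content.''

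The paper's resolution sidesteps any density or variance lower bound. Setting $Y=\tilde\Pi A\tilde\Pi'$, $Z=\tilde\Pi T\tilde\Pi'$ and $X=ZY^{-1}$, one must lower-bound $\E\log^-|\det(I+X)|$; writing $X=\sum_{l=1}^d N_lB^l$ ($d\le k^2$) and disintegrating radially, $X=rM$ with $M$ on a fixed ``sphere'' of matrices, the problem reduces to bounding $\int_0^\infty\log^-|\det(I+rM)|\,r^{d-1}e^{-r^2/2}\,dr$ uniformly in $M$ and $d$. The single-variable polynomial $r\mapsto\det(I+rM)$ has constant term $1$, so it factors as $\prod_{i\le k}(1-b_ir)$, and everything collapses to the scalar estimate $\int_0^\infty\log^-|1-br|\,r^{d-1}e^{-r^2/2}\,dr\ge-C$ uniformly over $b\in\C$, proved by direct calculus. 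Note why degeneracy is a non-issue here: only the negative part is being bounded, and a direction in which $T$ carries little randomness corresponds to $b$ near $0$, where $\log^-|1-br|$ is nearly $0$; the dangerous regime is $b$ of order $1$, where the singularity of $\log|1-br|$ is merely logarithmic and hence integrable against the bounded radial density. If you want to complete your version instead, you would need an honest Carbery--Wright-type anticoncentration bound for the polynomial $g\mapsto\det(I+\sum g_lB^l)$ normalized by its value $1$ at the origin -- a genuinely harder statement than the one-dimensional factored bound the paper actually uses.
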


\begin{proof}
We assume $\Xi_k(\Pi A\Pi')>-\infty$ as otherwise the result is trivial.

Let $\tilde\Pi$ be $\Pi$ composed with an isometry from the range of
$\Pi$ to $\R^k$ and similarly let $\tilde\Pi'$ be 
an isometry from $\R^k$ to the range of $\Pi'$.
Then we have $\Xi_k(\Pi B \Pi')=\log|\det(\tilde\Pi B\tilde \Pi')|$
for any bounded operator $B$ on $H$
so that we need to show
$$
\E \Big( \big(\log|\det(\tilde \Pi(A+T)\tilde\Pi')|
- \log|\det(\tilde \Pi A\tilde \Pi')|\big)^-\; \Big | T\in Q\Big)
\ge -C.
$$
Let $Y=\tilde\Pi A\tilde\Pi'$ and $Z=\tilde\Pi T\tilde\Pi'$, 
so that $Y$ is a fixed $k\times k$ matrix
and $Z$ is a $k\times k$ matrix-valued random variable with multivariate normal entries.
By our earlier assumption, $Y$ is
invertible, so let $X=ZY^{-1}$ (this also has multi-variate normal entries
for unconditioned $T$).
We then need a lower
bound for $\E\big(\log\det(I+X)\big| T\in Q\big)$.

The unconstrained matrix-valued random variable $X$ can be written as
$\sum_{l=1}^d N_l B^l$, where the $B^l$ are
fixed $k\times k$ matrices, $d$ is the
dimension of the support of $X$ (at most $k^2$
depending on the pattern of entries in the unperturbed $A$'s)
and the $N_l$ are independent standard normal random variables
(see for example \cite[Example 3.9.2]{Durrett}).

Let $\Psi$ denote the map from $\R^d$ to $M_{k\times k}$
defined by $x\mapsto \sum x_lB^l$.
Let $\mathcal S$ be the image under $\Psi$ of the unit sphere
and $\mu$ be the measure on $\mathcal S$ that is the push-forward
of the normalized volume measure on the unit sphere. The
unconditioned measure on $X$ is then the push forward of
$\mu\times C_dr^{d-1}e^{-r^2/2}\,dr$, where $C_d$ is chosen so that
$C_d\int r^{d-1}e^{-r^2/2}\,dr=1$. The conditioned measure on
$X$ (since the event being conditioned upon is of measure at least
$\frac 12$) is of the same form, but the density is multiplied by
a varying factor in the range [0,2].

It then suffices to lower bound
\begin{equation*}
2C_d
\int_\mathcal{S} d\mu(M)\int_0^\infty \log^-|\det(I+rM)|\, r^{d-1}e^{-r^2/2}\,dr.
\end{equation*}

In particular, it is enough to give a uniform lower bound for
$$
G(d,M)=\int_0^\infty \log^-|\det(I+rM)| \,r^{d-1}e^{-r^2/2}\,dr
$$
as $d$ ranges over the range 1 to $k^2$ and $M$ ranges over $M_{k\times k}$.

For each fixed $M$, write $p_M(r)=\det(I+rM)$, so that $p_M$ is a polynomial
of degree $k$ satisfying $p_M(0)=1$. Hence $p_M(r)$ can be written as a product
$\prod_{i=1}^{k}(1-b_ir)$. Define
\begin{align*}
F(d,b)&=\int_0^\infty \log^-|1-br|\,r^{d-1}e^{-r^2/2}\,dr,
\end{align*}
so that $G(d,M)\ge\sum_{i=1}^k F(d,b_i)$.  Hence it suffices to show that
$F(d,b)$ is uniformly bounded below as $b$ runs over the complex plane
and as $d$ runs over the range $1$ to $k^2$.

Next, notice that
$\log|1-br|\ge \log|1-\real(b)r|$, so $F(d,b)\ge F(d,|b|)$ and
it suffices to give a lower bound for positive real values of $b$.
Also
\begin{align*}
F(d,b)&=\frac{1}{b^d}\int_0^\infty \log^-|1-r|\,r^{d-1}e^{-r^2/(2b^2)}\,dr\\
&= \frac{1}{b^d}\int_0^2 \log|1-r|\,r^{d-1}e^{-r^2/(2b^2)}\,dr.
\end{align*}

For $b\ge \frac12$, $F(b,d)\ge \frac{1}{b^d}\int_0^2\log|1-r|r^{d-1}\,dr
\ge -2^d/b^d\ge -4^d$. For $0<b<\frac12$, one has
\begin{align*}
&F(d,b)\\
&\ge \frac 1{b^d}\int_0^{b^{d/(1+d)}}\log|1-r|r^{d-1}\,dr
+\frac1{b^d}\int_{b^{d/(1+d)}}^2\log|1-r|r^{d-1}e^{-r^2/(2b^2)}\,dr\\
&\ge -2\frac{1}{b^d}\int_0^{b^{d/(1+d)}}r^d\,dr
+(2/b)^d\exp(-1/(2b^\alpha))\int_0^2\log|1-r|\,dr\\
&\ge -2/(d+1)-2^{d+1}\exp(-1/(2b^\alpha))/b^d
\end{align*}
where $\alpha=2/(1+d)$.
This converges to $-2/(d+1)$ as $b$ approaches 0 from the right.
By continuity and compactness, for each of the finitely many values of $d$,
$F(d,b)$ is bounded below as $b$ ranges over $(0,\frac 12]$.
\end{proof}

\begin{prop}\label{prop:step2tilde}
Let $k>0$. Then there exists a $C_{\ref{prop:step2tilde}}$
with the following property.
For every finite sequence $A_0,\ldots,A_{n-1}$
of Hilbert-Schmidt operators, let $\Delta_0,\ldots,
\Delta_{n-1}$ be independent copies of the perturbation $\Delta$ as described above.
Let $\pertA_i$ denote $A_i+\epsilon\Delta_i$.

Then one has
$$
\E_{\Delta_0,\ldots,\Delta_{n-1}}\Big(\tilde\Xi_k(A_{n-1}^\ep\cdots A_0^\ep)-
\tilde\Xi_k(A_{n-1}\cdots A_0)
\Big)^-
\ge
-C_{\ref{prop:step2tilde}}n.
$$
\end{prop}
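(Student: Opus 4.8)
The plan is to interpolate between the unperturbed product $A_{n-1}\cdots A_0$ and the fully perturbed product $A_{n-1}^\epsilon\cdots A_0^\epsilon$ by switching one factor at a time, and to show that each single switch costs at most an additive constant in the sense of the negative part of the change in $\tilde\Xi_k$; the tool for a single switch is Lemma~\ref{lem:randomnessdoesnthurt}. Concretely, for $0\le j\le n$ I would set $W_j=\Pi_k\Delta\,B^{(j)}_{n-1}\cdots B^{(j)}_0\,\Delta'\Pi_k$, where $\Delta,\Delta'$ are the auxiliary independent copies of the perturbation appearing in the definition of $\tilde\Xi_k$ (independent of the $\Delta_i$'s in the product), and $B^{(j)}_i=A^\epsilon_i$ for $i<j$, $B^{(j)}_i=A_i$ for $i\ge j$. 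Then $\Xi_k(W_0)$ is distributed as the integrand of $\tilde\Xi_k(A_{n-1}\cdots A_0)$, $\Xi_k(W_n)$ as that of $\tilde\Xi_k(A^\epsilon_{n-1}\cdots A^\epsilon_0)$, and consecutive hybrids $W_j,W_{j+1}$ differ only in the $j$th factor. The key structural point is that, because of the outer $\Pi_k$'s, one has $W_j=\Pi_k\hat M_j\Pi_k$ and $W_{j+1}=\Pi_k(\hat M_j+\epsilon\hat N_j)\Pi_k$, where $\hat M_j=\Delta A_{n-1}\cdots A_{j+1}A_j A^\epsilon_{j-1}\cdots A^\epsilon_0\Delta'$ and $\hat N_j=\Delta A_{n-1}\cdots A_{j+1}\Delta_j A^\epsilon_{j-1}\cdots A^\epsilon_0\Delta'$, both lying in $\HS$.

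To assemble the estimate: since $x\mapsto x^-=\min(x,0)$ is concave, Jensen applied to the inner average over $(\Delta,\Delta')$ gives $\big(\tilde\Xi_k(A^\epsilon_{n-1}\cdots A^\epsilon_0)-\tilde\Xi_k(A_{n-1}\cdots A_0)\big)^-\ge\E_{\Delta,\Delta'}\big[(\Xi_k(W_n)-\Xi_k(W_0))^-\big]$; then the telescoping identity $\Xi_k(W_n)-\Xi_k(W_0)=\sum_{j=0}^{n-1}(\Xi_k(W_{j+1})-\Xi_k(W_j))$ together with $(\sum_j a_j)^-\ge\sum_j a_j^-$ reduces matters to showing $\E\big[(\Xi_k(W_{j+1})-\Xi_k(W_j))^-\big]\ge -C$ for a constant $C=C(k)$ and each $j$; summing over $j$ then proves the proposition with $C_{\ref{prop:step2tilde}}=C$. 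If $\tilde\Xi_k(A_{n-1}\cdots A_0)=-\infty$ the left side of the proposition is $0$ and there is nothing to prove (as with the convention in Lemma~\ref{lem:randomnessdoesnthurt}), so I would assume otherwise; a short argument using that each $A^\epsilon_i$ is a.s.\ invertible and that $\Delta,\Delta'$ a.s.\ have trivial kernel then shows all $\Xi_k(W_j)$ are a.s.\ finite, which is what makes the telescoping legitimate.

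For the single-switch bound I would condition on $\Delta$, $\Delta'$ and all $\Delta_i$ with $i\ne j$. Conditionally, $\hat M_j$ is a fixed element of $\HS$ while $\Delta_j$ keeps its Gaussian law and is independent of the conditioning; since $X\mapsto\epsilon\,\Delta A_{n-1}\cdots A_{j+1}\,X\,A^\epsilon_{j-1}\cdots A^\epsilon_0\Delta'$ is a bounded linear map of $\HS$ into itself, $\epsilon\hat N_j$ is (conditionally) a mean-zero multivariate-normal $\HS$-valued random operator. Hence Lemma~\ref{lem:randomnessdoesnthurt}, applied with $\Pi=\Pi'=\Pi_k$, $A=\hat M_j$, $T=\epsilon\hat N_j$ and $Q=\HS$, yields $\E_{\Delta_j}\big[(\Xi_k(W_{j+1})-\Xi_k(W_j))^-\,\big|\,\cdot\,\big]\ge -C_{\ref{lem:randomnessdoesnthurt}}$, and averaging over the conditioning variables removes the conditioning.

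I expect the only real content beyond bookkeeping to be the observation underlying the second paragraph: replacing a single factor $A_j$ by $A_j+\epsilon\Delta_j$ inside the sandwiched product perturbs the relevant matrix by $\Pi_k(\epsilon\hat N_j)\Pi_k$ with $\epsilon\hat N_j$ a conditionally Gaussian, mean-zero element of $\HS$, so the situation is exactly the one Lemma~\ref{lem:randomnessdoesnthurt} was designed for, and the $\Pi_k$-sandwiching it demands is built into the definition of $\tilde\Xi_k$ and so comes for free. The remaining care — tracking the $-\infty$ conventions and verifying the measurability and independence needed to condition on $(\Delta,\Delta',(\Delta_i)_{i\ne j})$ and then integrate — is routine.
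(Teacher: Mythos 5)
Your proposal is correct and follows essentially the same route as the paper: telescope the product one factor at a time and apply Lemma~\ref{lem:randomnessdoesnthurt} with $Q=\HS$ to each single switch, using concavity of $x\mapsto x^-$ to pass the negative part inside the $(\Delta,\Delta')$-average. The only (harmless) variation is that you apply the lemma directly with $\Pi=\Pi'=\Pi_k$ and $T=\epsilon\hat N_j$ the full conditionally Gaussian sandwiched perturbation, whereas the paper first factors off $\Xi_k(\Pi_k\Delta L)$ and $\Xi_k(R\Delta'\Pi_k)$ to reduce to a perturbation of the middle factor; both work because the constant in Lemma~\ref{lem:randomnessdoesnthurt} is uniform over all mean-zero Gaussian $T$.
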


\begin{proof}
We have
\begin{align*}
&\E\Big(\tilde \Xi_k(A_{n-1}^\ep\cdots A_0^\ep)-\tilde \Xi_k(A_{n-1}\cdots A_0)
\Big)^-\\
&\ge \sum_{j=0}^{n-1} \E\Big(\tilde \Xi_k(A_{n-1}^\ep\cdots A_j^\ep A_{j-1}\cdots A_0)-
\tilde \Xi_k(A_{n-1}^\ep\cdots A_{j+1}^\ep A_{j}\cdots A_0)\Big)^-.
\end{align*}

We focus on giving a lower bound for one of the terms in the summation.
We write such a term as
$$
\E_{\Delta_j}\Big(\tilde \Xi_k (L(A_j+\ep\Delta)R) -
\tilde\Xi_k(LA_jR)\Big)^-.
$$

This expectation should be interpreted as being conditioned
on the values of $\Delta_{j+1},\ldots,\Delta_n$, so that
$L=(A_n+\ep\Delta_n)\cdots (A_{j+1}+\ep\Delta_{j+1})$.

The above expectation can be rewritten as:
\begin{equation}\label{eq:telescopesummand}
\E_{\Delta,\Delta'}\E_{\Delta_j}
\big[
\Xi_k(\Pi_k\Delta L(A+\ep\Delta_j)R\Delta'\Pi_k)-
\Xi_k(\Pi_k\Delta LAR\Delta'\Pi_k)\big]^-.
\end{equation}

Once $\Delta$ and $\Delta'$ are fixed, the inner expectation is
\begin{equation}\label{eq:fixD1D2}
\E_{\Delta_j}
\big[
\Xi_k(\Pi_k\Delta L(A+\ep\Delta_j)R\Delta'\Pi_k)-
\Xi_k(\Pi_k\Delta LAR\Delta'\Pi_k)\big]^-.
\end{equation}

Now let $\Pi$ be the orthogonal projection onto the orthogonal
complement of the kernel of $\Pi_k\Delta L$
and $\Pi'$ be the orthogonal projection onto the range of $R\Delta'\Pi_k$.
Then we have
\begin{align*}
\Xi_k(\Pi_k\Delta L(A+\ep\Delta_j)R\Delta'\Pi_k)&=
\Xi_k(\Pi_k\Delta L)+\Xi_k(\Pi(A+\ep\Delta_j)\Pi')+\Xi_k(R\Delta'\Pi_k);\\
\Xi_k(\Pi_k\Delta LAR\Delta'\Pi_k)&=
\Xi_k(\Pi_k\Delta L)+\Xi_k(\Pi A\Pi')+\Xi_k(R\Delta'\Pi_k);
\end{align*}

Now the quantity in \eqref{eq:fixD1D2} is
$$
\E_{\Delta_j}
\big[\Xi_k(\Pi(A+\ep\Delta_j)\Pi')-\Xi_k(\Pi A\Pi')\big]^-
$$
Applying Lemma \ref{lem:randomnessdoesnthurt} with $Q=\HS$, this is bounded below by $-C$,
independently of $\Delta$ and $\Delta'$, so that the
quantity in \eqref{eq:telescopesummand} is also bounded below by $-C$.
Since there are $n$ such terms, the statement in the lemma follows.
\end{proof}

\section{Type II bad block perturbations}\label{sec:badPertII}

Here we give an argument for good blocks in the base that have large
perturbations. We will obtain a drop in $\tilde\Xi_k$ over a bad
block of size $O(\log\epsilon)$ at worst, that is a drop of size $O(1)$ per symbol
since blocks are of length proportional to $|\log\ep|$.
However since the frequency of these
blocks is $O(e^{-C/\epsilon})$, the contribution of this drop to
the singular values of a large string of blocks is minuscule.

\begin{lem}\label{lem:integral}
There exists a constant $C>0$ such that if
$N$ is a standard normal random variable and $\Lambda>2$,
then for each $a\in\C$,
$$
\E\big(\log^-|1-aN|\big|N\ge \Lambda\big)\ge -C\log \Lambda.
$$
\end{lem}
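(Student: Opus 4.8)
The plan is to reduce to a real positive coefficient, turn the estimate into a bound on a negative fractional moment of $|1-aN|$, and then estimate that moment by direct integration using the Gaussian tail bound.

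First I would do two reductions. Since $|1-aN|^2=(1-\real(a)N)^2+(\mathrm{Im}(a)N)^2\ge(1-\real(a)N)^2$, we have $\log^-|1-aN|\ge\log^-|1-\real(a)N|$, so replacing $a$ by $\real(a)$ only decreases the left-hand side, and we may assume $a\in\R$. If $a\le 0$ then $1-aN\ge 1$ for every $N\ge\Lambda>0$, so $\log^-|1-aN|=0$ and there is nothing to prove; hence assume $a>0$. Next, using the identity $\log^- x=-2\log^+(x^{-1/2})$ together with $\log^+y\le\log(1+y)$ and Jensen's inequality,
\[
\E\big(\log^-|1-aN|\,\big|\,N\ge\Lambda\big)\ge-2\log\!\Big(1+\E\big(|1-aN|^{-1/2}\,\big|\,N\ge\Lambda\big)\Big).
\]
Thus it suffices to show $\E\big(|1-aN|^{-1/2}\,\big|\,N\ge\Lambda\big)\le C\Lambda^2$ for an absolute constant $C$ and all $\Lambda>2$, since then the right-hand side is $-2\log(1+C\Lambda^2)=O(\log\Lambda)$.

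For the moment bound, write $\varphi(x)=(2\pi)^{-1/2}e^{-x^2/2}$. The conditional density of $N$ given $N\ge\Lambda$ is $\varphi(x)/\PP(N\ge\Lambda)$ on $[\Lambda,\infty)$. The standard tail estimate \cite[Theorem 1.2.3]{Durrett} (already invoked in the proof of Lemma~\ref{lem:goodPert}) gives $\PP(N\ge\Lambda)\ge(\Lambda^{-1}-\Lambda^{-3})\varphi(\Lambda)\ge\tfrac34\Lambda^{-1}\varphi(\Lambda)$ for $\Lambda>2$, while $\varphi(x)=\varphi(\Lambda)e^{-(x^2-\Lambda^2)/2}\le\varphi(\Lambda)e^{-\Lambda(x-\Lambda)}$ for $x\ge\Lambda$; hence the conditional density is at most $\tfrac43\Lambda e^{-\Lambda(x-\Lambda)}$. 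Substituting $u=x-\Lambda$ and then $w=au$, and setting $\beta=a\Lambda$ and $\kappa=\Lambda/a$ (so $\kappa\beta=\Lambda^2$), we obtain
\[
\E\big(|1-aN|^{-1/2}\,\big|\,N\ge\Lambda\big)\le\tfrac43\kappa\int_0^\infty|1-\beta-w|^{-1/2}e^{-\kappa w}\,dw .
\]

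Finally I would estimate this integral by a dichotomy on $\beta$. If $\beta\le\tfrac12$, the singularity of the integrand sits at $w_\ast=1-\beta\ge\tfrac12$, away from $0$, while $\kappa=\Lambda^2/\beta$ is large: bounding $|1-\beta-w|^{-1/2}$ by a constant for $w\le\tfrac14$ and noting that the contribution of $w>\tfrac14$ carries a factor $e^{-\kappa/4}$, one finds the integral is $\le 2/\kappa+Ce^{-\kappa/4}$, and multiplying by $\tfrac43\kappa$ gives an absolute constant because $\kappa e^{-\kappa/4}$ is bounded. If $\beta>\tfrac12$, then $\kappa<2\Lambda^2$; for $\beta\ge1$ the integrand is at most $w^{-1/2}e^{-\kappa w}$, whose integral is $\sqrt\pi\,\kappa^{-1/2}$, giving the bound $O(\sqrt\kappa)=O(\Lambda)$; for $\tfrac12<\beta<1$ one splits at $w_\ast=1-\beta$, using $\int_0^{w_\ast}(w_\ast-w)^{-1/2}\,dw\le 2$ and $\int_{w_\ast}^\infty(w-w_\ast)^{-1/2}e^{-\kappa w}\,dw\le\sqrt\pi\,\kappa^{-1/2}$, so the integral is $O(1)$ and the full quantity is $O(\kappa)=O(\Lambda^2)$. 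In all cases $\E\big(|1-aN|^{-1/2}\mid N\ge\Lambda\big)\le C\Lambda^2$, as required. The main obstacle is controlling the $|1-aN|^{-1/2}$ singularity uniformly in $a$; the point of the dichotomy is that when $a$ is so small that the conditioned law of $aN$ is concentrated far from the singularity, the exponential rate $\kappa=\Lambda^2/\beta$ overwhelms the compensating $\kappa$ prefactor, whereas when $a$ is not small the rate $\kappa$ is only polynomially large in $\Lambda$ and the $|s|^{-1/2}$ singularity is locally integrable.
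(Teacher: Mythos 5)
Your argument is correct, but it takes a genuinely different route from the paper. The paper works directly with the logarithm: after reducing to $a>0$ it splits into the cases $a>2/\Lambda$, $1/(3\Lambda)\le a\le 2/\Lambda$ and $a<1/(3\Lambda)$, isolates the short interval where $\log|1-ax|$ drops below $-2\log\Lambda$, and controls it with an elementary fact about averages of $\log^-$ over intervals together with comparisons of the Gaussian weight across that interval (a ratio-of-integrals argument). You instead convert the problem into a negative fractional moment bound: the identity $\log^-x=-2\log^+(x^{-1/2})$ plus $\log^+y\le\log(1+y)$ and Jensen reduce everything to showing $\E\big(|1-aN|^{-1/2}\,\big|\,N\ge\Lambda\big)\le C\Lambda^2$, and you prove this by dominating the conditional density by $\tfrac43\Lambda e^{-\Lambda(x-\Lambda)}$ — which makes rigorous exactly the heuristic the paper states, that $N$ conditioned on $N\ge\Lambda$ is roughly $\Lambda+\mathrm{Exp}(\Lambda)$ — and then computing explicit Gamma-type integrals with a dichotomy on $\beta=a\Lambda$. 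What each buys: your route avoids the interval-averaging lemma and the careful bookkeeping near the singularity of the logarithm, replacing it with the locally integrable singularity $|s|^{-1/2}$ and standard integrals, and it delivers a clean quantitative statement (a conditional $-\tfrac12$ moment of size $O(\Lambda^2)$) that could be reused; the paper's proof stays entirely with the logarithm and obtains the $-C\log\Lambda$ loss directly, without introducing moments. One small polish point: in the case $\beta\le\tfrac12$, to justify that the contribution from $w>\tfrac14$ is $\le Ce^{-\kappa/4}$ with $C$ absolute you should retain part of the exponential factor through the singularity, e.g. write $e^{-\kappa w}\le e^{-\kappa/4}e^{-(w-1/4)}$ for $\kappa\ge 1$, so that $\int_{1/4}^\infty|1-\beta-w|^{-1/2}e^{-(w-1/4)}\,dw$ is bounded by an absolute constant; with that one-line fix the case analysis is complete and the conclusion $\E\big(\log^-|1-aN|\,\big|\,N\ge\Lambda\big)\ge-2\log(1+C\Lambda^2)\ge-C'\log\Lambda$ follows as you state.
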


Before giving the proof, let us give a heuristic explanation for why this should be true.
Conditional on $N\ge \Lambda$, the distribution of $N$ is approximately $\Lambda+
\text{Exp}(\Lambda)$, that is it typically takes values that are $\Lambda+O(1/\Lambda)$. The
worst case for the inequality
is approximately when $a=1/\Lambda$ and then the quantity inside the logarithm
is roughly $O(1/\Lambda^2)$.

\begin{proof}
We first recall that $\int_0^a\log x\,dx= a(\log a-1)$, so that the average value of
the logarithm function over $[0,a]$ is $\log a-1$.
We claim that for any interval $J$, one has
\begin{equation}\label{eq:logint}
\frac{1}{|J|}\int_J\log^-|x|\,dx\ge 2(\max_{x\in J}\log^-|x|-1).
\end{equation}
Indeed, this follows already for intervals $[0,a]$ with $0<a<1$,
and hence for sub-intervals of $[0,1]$ and $[-1,0]$. 
For intervals $[-a,b]$ with $a<0<|a|\le b\le 1$, we have
$1/(a+b)\int_{-a}^b\log^-|x|\,dx \ge 1/b\int_{-b}^b\log^-|x|\,dx=2(\log b-1)$.
If the interval $J$ is entirely outside $[-1,1]$, the inequality is trivial; and
if $J$ intersects $[-1,1]$, we have already established the inequality for $J\cap [-1,1]$,
from which the inequality for $J$ follows.

For $a\in\C$, the integrand in the statement
reduced if $a$ is replaced by $|a|$ so we
may assume $a>0$. If $a>2/\Lambda$, the integral is 0.

If $1/(3\Lambda)\le a\le 2/\Lambda$, let $I=[\Lambda,\frac 2a)$, the sub-interval
of $[\Lambda,\infty)$ where $\log|1-ax|<0$; and $J=[\frac 1a-\frac 1{a\Lambda^2},
\frac 1a+\frac1{a\Lambda^2}]$, the interval where $\log|1-ax|<-2\log \Lambda$.

The quantity to be bounded is
\begin{align*}
&\frac{\int_I\log^-|1-ax|e^{-x^2/2}\,dx}
{\int_\Lambda^\infty e^{-x^2/2}\,dx}
\ge
\frac{\int_I\log^-|1-ax|e^{-x^2/2}\,dx}
{\int_I e^{-x^2/2}\,dx}\\
=\, &
\frac{\int_{I\cap J}\log^-|1-ax|e^{-x^2/2}\,dx + \int_{I\setminus J}
\log^-|1-ax|e^{-x^2/2}\,dx}
{\int_{I\cap J}e^{-x^2/2}\,dx + \int_{I\setminus J}
e^{-x^2/2}\,dx}
\end{align*}

The ratio of the two integrals over $I\setminus J$ is bounded below
by $-2\log \Lambda$.
Using \eqref{eq:logint}, the ratio of the two integrals over $I\cap J$ is bounded
below by $2(-2\log \Lambda-1)\max_{I\cap J}e^{-x^2/2}/\min_{I\cap J}e^{-x^2/2}
\ge 2e^{2/(a^2\Lambda^2)}(-2\log \Lambda-1)\ge -2e^{18}(2\log \Lambda+1)$.
Since both ratios are bounded
below by a constant multiple of $\log \Lambda$, so is the ratio of the sums.

If $a<1/(3\Lambda)$, we argue similarly. In this case, we let
$J=[\frac{1}{2a},\frac{3}{2a}]$. On $I\setminus J$, $\log|1-ax|$ is bounded
below by $-\log 2$, so that
$$
\frac{\int_{I\setminus J}\log|1-ax|e^{-x^2/2}\,dx}
{\int_\Lambda^\infty e^{-x^2/2}\,dx}\ge
\frac{\int_{I\setminus J}\log|1-ax|e^{-x^2/2}\,dx}
{\int_{I\setminus J} e^{-x^2/2}\,dx}\ge-\log 2.
$$
On $I\cap J$, we have $e^{-x^2/2}\le e^{-1/(8a^2)}$. Also $\int_\Lambda^\infty
e^{-x^2/2}\,dx\ge e^{-\Lambda^2/2}/(2\Lambda)$, using \cite[Theorem 1.2.3]{Durrett}.
Hence
$$
\frac{\int_{I\cap J}\log^-|1-ax|e^{-x^2/2}\,dx}
{\int_\Lambda^\infty e^{-x^2/2}\,dx}\ge
\frac{2e^{-1/(8a^2)}(-\log 2-1)\frac1a}{e^{-\Lambda^2/2}/(2\Lambda)},
$$
using \eqref{eq:logint}.
When $a=1/(3\Lambda)$, this is  $4(-\log 2-1)3\Lambda^2e^{-5\Lambda^2/8}$
and the lower bound increases as $a$ is further reduced. Minimizing this expression
over $\Lambda$, we see that
there is a $C$, independent of $\Lambda$, such that
$\E\big(\log^-|1-aN|\big|N\ge \Lambda\big)\ge -C$
for all $|a|<1/(3\Lambda)$.
\end{proof}

\begin{lem}\label{lem:randomnessdoesnthurtbad}
Let $k>0$ and $\Delta$ be as throughout the article. There exists $C>0$ such that
for all sufficiently small $\epsilon>0$,
for each $a,b$ and each pair of $k$-dimensional orthogonal projections
$\Pi$ and $\Pi'$,
$$
\E\Big(\big(\Xi_k(\Pi(A+\epsilon\Delta)\Pi')-\Xi_k(\Pi A \Pi')\big)^-
\Big|\textsf{Wild}_{a,b}\Big)
> C(\log\epsilon-a-b),
$$
where $\textsf{Wild}_{a,b}$ is the event that $\Delta$ satisfies
$|\Delta_{l,m}|<(\frac23)^{l+m}\epsilon^{-1/2}$
for each $(l,m)$ that is lexicographically smaller than $(a,b)$ and
$|\Delta_{a,b}|\ge \epsilon^{-1/2}(\frac23)^{a+b}$ (where $(l,m)$ is
lexicographically smaller than $(a,b)$ if $l<a$ or $l=a$ and $m<b$).
\end{lem}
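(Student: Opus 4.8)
The plan is to reduce the estimate to the scalar computation already isolated in Lemma~\ref{lem:integral}, by the same diagonalization device used in Lemma~\ref{lem:PS} and Lemma~\ref{lem:randomnessdoesnthurt}. First I would use Lemma~\ref{lem:simdiag} to pick orthonormal bases for the ranges of $\Pi$ and $\Pi'$ that simultaneously make the relevant images orthogonal, and compose with isometries to $\R^k$, so that $\Xi_k(\Pi B\Pi')=\log|\det(\tilde\Pi B\tilde\Pi')|$ for any $B$. Writing $Y=\tilde\Pi A\tilde\Pi'$ (a fixed $k\times k$ matrix, which we may assume invertible, else the left-hand side is trivially bounded below) and $Z=\tilde\Pi\Delta\tilde\Pi'$, the quantity to bound becomes $\E\big((\log|\det(I+\epsilon ZY^{-1})|)^-\mid\textsf{Wild}_{a,b}\big)$. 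The point of conditioning on $\textsf{Wild}_{a,b}$ is that exactly one entry of $\Delta$, namely $\Delta_{a,b}$, is forced to be large (at least $\epsilon^{-1/2}(2/3)^{a+b}$, i.e.\ at least $2^{a+b}\epsilon^{-1/2}$ standard deviations out), while all lexicographically earlier entries are tame and all later entries are unconstrained.

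The key step is to separate the contribution of the forced entry from the rest. Condition further on the values of all entries of $\Delta$ other than $\Delta_{a,b}$; given these, $X:=\epsilon ZY^{-1}$ is an affine function of the single scalar $t=\Delta_{a,b}$, say $X=X_0+tX_1$ where $X_1=\epsilon(\tilde\Pi E_{a,b}\tilde\Pi')Y^{-1}$ and $X_0$ collects everything else. Then $\det(I+X_0+tX_1)$ is a polynomial in $t$ of degree at most $k$ with leading behaviour controlled, and $\log|\det(I+X)|$ can be written (after factoring) as a sum of at most $k$ terms of the form $\log|t-\beta_i|$ plus a bounded-below constant coming from the leading coefficient; equivalently, as a sum of terms $\log|1-\beta_i^{-1}t|$ up to harmless additive constants. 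The conditional law of $t$ given $\textsf{Wild}_{a,b}$ is that of $3^{-(a+b)}N$ conditioned on $|N|\ge 2^{a+b}\epsilon^{-1/2}=:\Lambda$; up to the trivial symmetry $t\leftrightarrow -t$ this is the one-sided conditioning $N\ge\Lambda$ of Lemma~\ref{lem:integral}. Applying that lemma to each of the $\le k$ scalar factors gives a lower bound of the form $-Ck\log\Lambda = -Ck\log(2^{a+b}\epsilon^{-1/2}) = -C'k\big(\tfrac12|\log\epsilon|+(a+b)\log 2\big)$, which is $\ge C(\log\epsilon-a-b)$ for a suitable constant $C$ (absorbing $k$) after noting $\log\epsilon<0$ so $|\log\epsilon|=-\log\epsilon$. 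Since this bound is uniform over the conditioned-on entries, it survives taking the outer expectation over those entries.

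The main obstacle I anticipate is handling the degenerate configurations in the inner scalar problem: $\det(I+X_0+tX_1)$ may fail to have full degree $k$ in $t$ (if $X_1$ is not invertible, which is typical since it has rank at most $1$), so one must argue carefully about its leading coefficient and the location of its roots $\beta_i$ relative to the support of $t$. Following the proof of Lemma~\ref{lem:randomnessdoesnthurt}, one writes the polynomial $p(t)=\det(I+X_0+tX_1)$, notes $p$ has degree $\le k$ and, when $p\not\equiv 0$, factors $p(t)=c\prod(t-\beta_i)$; dividing by $p$ evaluated at a convenient point and using $\log^-$ bounds reduces everything to finitely many scalar integrals of the type in Lemma~\ref{lem:integral}, together with a uniform lower bound for the ``leading-coefficient'' term that does not depend on $\epsilon$. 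The case $p\equiv 0$ (which would make the determinant vanish identically in $t$) cannot occur for $t$ in a set of positive measure since $\det(I+X)$ is a nonzero analytic function of the Gaussian entries whenever $Y$ is invertible, so it contributes nothing. A secondary technical point is verifying that the tameness constraints on the lexicographically-earlier entries, which only cut the Gaussian vector down to a convex set of probability bounded below, do not spoil the uniform bound — this is handled exactly as the ``density multiplied by a factor in $[0,2]$'' argument in Lemma~\ref{lem:PS} and Lemma~\ref{lem:randomnessdoesnthurt}, applied to the conditional law of the remaining entries.
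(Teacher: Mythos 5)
Your proposal is correct and follows essentially the same route as the paper: reduce to a $k\times k$ determinant via the isometry trick, isolate the forced entry $\Delta_{a,b}$ whose rank-one contribution makes the determinant affine in that entry, invoke Lemma~\ref{lem:integral} for the conditioned scalar integral (yielding the $-C\log\Lambda=-C(2^{-1}|\log\epsilon|+(a+b)\log 2)$ bound), and absorb the remaining Gaussian entries via the Lemma~\ref{lem:randomnessdoesnthurt} mechanism. The only differences are cosmetic — you integrate out the big entry first and handle the tameness conditioning by the $[0,2]$-density argument directly, whereas the paper first replaces $\textsf{Wild}_{a,b}$ by the unconstrained event $B_{a,b}$ via a factor-of-two comparison and then integrates out the other entries first; note only that your ``leading-coefficient'' term $\log|\det(I+X_0)|$ is bounded below in expectation over the remaining entries, not uniformly, which is exactly what the final integration supplies.
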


\begin{proof}
We deal with the case $\Delta_{a,b}$ positive. The case where it is
negative is exactly analogous.
Let $B_{a,b}$ be the collection of those $\Delta$ satisfying $\Delta_{a,b}
\ge \epsilon^{-1/2}(\frac23)^{a+b}$ (and no other condition).
The argument of Lemma \ref{lem:goodPert} shows that
$\PP(\textsf{Wild}_{a,b}|B_{a,b})>\frac 12$.
This allows us to deduce as in the proof of Lemma
\ref{lem:randomnessdoesnthurt} that
\begin{align*}
&\E\Big(\big(\Xi_k(\Pi(A+\epsilon\Delta)\Pi')-\Xi_k(\Pi A \Pi')\big)^-\Big|
\Delta\in \textsf{Wild}_{a,b}\Big)\\
&>2
\E\Big(\big(\Xi_k(\Pi(A+\epsilon\Delta)\Pi')-\Xi_k(\Pi A \Pi')\big)^-\Big|
\Delta\in B_{a,b}\Big)
\end{align*}
Hence it suffices to show that
$$
\E\Big(\big(\Xi_k(\Pi(A+\epsilon\Delta)\Pi')-\Xi_k(\Pi A \Pi')\big)^-\Big|
\Delta\in B_{a,b}\Big)> C(\log\epsilon-a-b).
$$

Using the same reduction as in Lemma \ref{lem:randomnessdoesnthurt},
the calculation reduces to
showing that there is a $C$ such that for sufficiently small $\epsilon>0$,
one has for an arbitrary $k\times k$ multi-variate normal matrix-valued random
variable, $R$, whose entries have zero mean and for an arbitrary rank 1 $k\times k$ matrix $Y$,
$$
\E_{N,R}\Big(\Xi_k(I+R+\epsilon NY)^-\big | N>2^{a+b}
\epsilon^{-1/2}\Big)\ge C(\log\epsilon-a-b),
$$
where $N$ is an independent standard normal random variable.
First fixing $N$ and taking the expectation over $R$ using
Lemma \ref{lem:randomnessdoesnthurt} (taking $Q$ to be the full range of $\Delta$),
 we obtain
\begin{align*}
&\E_{N,R}\Big(\Xi_k(I+R+\epsilon NY)^-\big | N>2^{a+b}
\epsilon^{-1/2}\Big)\\
&\ge
\E_N\Big(\Xi_k(I+\epsilon NY)^-\big|N>2^{a+b}\epsilon^{-1/2}\Big)-C.
\end{align*}
Hence it suffices to show
$$
\E\big(\Xi_k(I+\epsilon NY)^-\big|N>2^{a+b}\epsilon^{-1/2}
\big)\ge C(\log\epsilon-a-b).
$$
Since $Y$ has rank 1, the polynomial $\det(I+tY)$ is of the form $1+at$.
To see this, notice the determinant is unchanged if $I+tY$ is conjugated
by an orthogonal matrix, $O$. Then choose $O$ so that the first column
spans the range of $Y$ so that $O^{-1}(I+tY)O=I+t\tilde Y$, where
$\tilde Y$ has only one non-zero row. $\det(I+tY)$ is then $1+t\tilde Y_{1,1}$.
Hence we are seeking a lower bound for
$$
\E(\log^-|1+cN|\big|N>2^{a+b}\epsilon^{-1/2}),
$$
which is of the desired form by Lemma \ref{lem:integral}.
\end{proof}

\begin{prop}\label{prop:badtriangleineq}
There exists a $C_{\ref{prop:badtriangleineq}}>0$ with the following property.
For any $m>0$, let $B$ be the event that at least one of the perturbations
$\Delta_0,\ldots,\Delta_{m-1}$
is wild. Then
\begin{equation*}
\E\big(\tilde\Xi_k(\cocyclepss m\oom)\big|B\big)\ge
\tilde\Xi_k(\cocycle m\om)+C_{\ref{prop:badtriangleineq}}(\log\epsilon-m).
\end{equation*}
\end{prop}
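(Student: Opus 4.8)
The plan is to run the telescoping argument of Proposition~\ref{prop:step2tilde} under the conditioning on $B$. Assuming, as we may, that $\tilde\Xi_k(\cocycle m\om)>-\infty$ (otherwise the claim is vacuous; the perturbed quantities are a.s.\ finite since adding $\epsilon\Delta_i$ makes each factor a.s.\ injective, so each relevant $k\times k$ matrix is a.s.\ nonsingular), write $A_i=A_{\sigma^i\om}$ and $A_i^\epsilon=A_i+\epsilon\Delta_i$ for $0\le i<m$, and expand
$$
\tilde\Xi_k(\cocyclepss m\oom)-\tilde\Xi_k(\cocycle m\om)=\sum_{i=0}^{m-1}T_i,\qquad T_i:=\tilde\Xi_k\big(A_{m-1}^\epsilon\cdots A_i^\epsilon A_{i-1}\cdots A_0\big)-\tilde\Xi_k\big(A_{m-1}^\epsilon\cdots A_{i+1}^\epsilon A_i\cdots A_0\big).
$$
Since $\min(\sum_i x_i,0)\ge\sum_i\min(x_i,0)$, we have $\sum_i T_i\ge\sum_i T_i^-$, so it suffices to bound $\sum_i\E(T_i^-\mid B)$ from below. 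Partition $B$ by the index $j$ of the first wild perturbation: $B=\bigsqcup_{j=0}^{m-1}B_j$ with $B_j=\{\Delta_0,\dots,\Delta_{j-1}\text{ tame and }\Delta_j\text{ wild}\}$; as $B$ is the disjoint union of the $B_j$, it is enough to prove $\sum_i\E(T_i^-\mid B_j)\ge C_{\ref{prop:badtriangleineq}}(\log\epsilon-m)$ for each $j$.

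Fix $j$. The key observation is that in $T_i$ the product to the \emph{right} of the swapped factor, namely $A_{i-1}\cdots A_0$, is the unperturbed (deterministic) block, so $T_i$ is a function of $\Delta_i$, of $\Delta_{i+1},\dots,\Delta_{m-1}$ (through the perturbed block $L=A_{m-1}^\epsilon\cdots A_{i+1}^\epsilon$ on the left), and of the two auxiliary perturbations $\Delta,\Delta'$ implicit in $\tilde\Xi_k$; in particular $\Delta_0,\dots,\Delta_{i-1}$ do not enter. Since the $\Delta_n$ are independent, conditioning on $B_j$ and on everything other than $\Delta_i$ leaves $\Delta_i$ distributed as $\Delta$ subject only to the single constraint $B_j$ imposes on it: tameness if $i<j$, wildness if $i=j$, and no constraint if $i>j$. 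Fixing also $L$, $\Delta$ and $\Delta'$, and using $(\E h)^-\ge\E h^-$ together with the kernel/range decomposition of $\Xi_k$ of a product exactly as in Proposition~\ref{prop:step2tilde}, each term reduces to $\E_{\Delta_i}\big(\Xi_k(\Pi(A_i+\epsilon\Delta_i)\Pi')-\Xi_k(\Pi A_i\Pi')\big)^-$ for suitable $k$-dimensional orthogonal projections $\Pi,\Pi'$ (depending on $L,\Delta,\Delta'$, but the bounds below are uniform in these). For $i\ne j$ this is at least $-C$ by Lemma~\ref{lem:randomnessdoesnthurt}, taking $Q=\HS$ when $i>j$ and $Q=\{\Delta\text{ tame}\}$ — which has $\gamma$-measure $\ge\frac12$ for $\epsilon$ small by Lemma~\ref{lem:goodPert} — when $i<j$. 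For $i=j$ we further split $B_j$ by the lexicographic position $(a,b)$ of the first wild entry of $\Delta_j$; on $\{\Delta_j\in\textsf{Wild}_{a,b}\}$, Lemma~\ref{lem:randomnessdoesnthurtbad} gives $\E(T_j^-\mid B_j,\textsf{Wild}_{a,b})\ge C(\log\epsilon-a-b)$.

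Summing over $i$ conditionally on $B_j$, the $m-1$ terms with $i\ne j$ contribute at least $-C(m-1)$, while averaging the $i=j$ term over $(a,b)$ gives at least $C\log\epsilon-C\,\E(a+b\mid B_j)$. A routine Gaussian tail estimate (using that $\PP(\textsf{Wild}_{a,b})$ decays doubly-exponentially in $a+b$, so the conditional law of the position given $\Delta_j$ wild is concentrated at $(1,1)$) shows $\E(a+b\mid B_j)$ is bounded above uniformly in $j$ and in $\epsilon$ (indeed it tends to $2$). Hence $\sum_i\E(T_i^-\mid B_j)\ge C\log\epsilon-Cm-C'$, which after enlarging the constant is at least $C_{\ref{prop:badtriangleineq}}(\log\epsilon-m)$. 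This completes the argument.

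I expect the only real obstacle to be bookkeeping: one must check that each layer of conditioning — on $B$, then on which block is first wild, then on the lexicographic position of that block's first wild entry, then on all perturbations except the one being integrated — genuinely reduces each telescoping term to an instance of Lemma~\ref{lem:randomnessdoesnthurt} or Lemma~\ref{lem:randomnessdoesnthurtbad}, and that the reduction of a $\tilde\Xi_k$-difference to a $\Xi_k$-difference of the form $\Xi_k(\Pi(A+\epsilon\Delta)\Pi')-\Xi_k(\Pi A\Pi')$ goes through verbatim from Proposition~\ref{prop:step2tilde} in the presence of this conditioning. The Gaussian-tail control of $\E(a+b\mid\Delta_j\text{ wild})$ is the only genuinely new (but routine) computation.
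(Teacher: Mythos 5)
Your proposal is correct and follows essentially the same route as the paper: decompose $B$ into the disjoint events $B_j$ indexed by the first wild perturbation, telescope as in Proposition~\ref{prop:step2tilde}, apply Lemma~\ref{lem:randomnessdoesnthurt} (with $Q=\HS$ or $Q=\{\Delta \text{ tame}\}$) to the $m-1$ non-critical terms, and handle the term $i=j$ by further conditioning on the lexicographic position $(a,b)$ of the first oversized entry via Lemma~\ref{lem:randomnessdoesnthurtbad}, using the doubly-exponential decay of $\PP(\textsf{Wild}_{a,b})$ to bound the average of $a+b$. The only differences are presentational (you condition after telescoping and pass through negative parts explicitly), and your remarks on the factorization of the conditioning across the independent $\Delta_n$ and on $\E(a+b\mid B_j)$ make explicit steps the paper leaves implicit.
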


\begin{proof}
We write $B$ as $B_0\cup\ldots\cup B_{m-1}$, where $B_i$
is the event that the $i$th perturbation matrix is wild, and all previous ones are tame.
Since the $B_i$ are disjoint,
it suffices to establish that there is a $C>0$ such that for each $i$,
\begin{equation}\label{eq:badtri2}
\E(\tilde\Xi_k(\cocyclepss m\oom)|B_i)\ge
\tilde\Xi_k(\cocycle m\om)+C(\log\epsilon-m).
\end{equation}

We argue as in Proposition \ref{prop:step2tilde}:
\begin{align*}
&\E\Big(\tilde\Xi_k(\cocyclepss m\oom)-\tilde\Xi_k(\cocycle m\om)\Big|B_i\Big)\\
&=\sum_{j=0}^{m-1}
\E\Big(\tilde\Xi_k\big(\cocyclepss {m-j}{\bar\sigma^{j}\oom}
\cocycle j\om\big)-
\tilde\Xi_k\big(\cocyclepss{m-j-1}{\bar\sigma^{j+1}\oom}
\cocycle{j+1}\om\big)\Big|B_i\Big)
\end{align*}
As in Proposition \ref{prop:step2tilde}, finding lower bounds for this reduces
to finding lower bounds for $\E\Big(
\tilde\Xi_k(\Pi \pertA_{\bar\sig^j\oom}\Pi')-
\tilde\Xi_k(\Pi A_{\sig^j\om}\Pi')\Big| B_i\Big)$.

In this case, for $j>i$, the conditional distribution of $\Delta_j$ is
the same as the distribution used in Lemma \ref{lem:randomnessdoesnthurt}
with $Q=\HS$, so that lemma gives a bound
\begin{equation}\label{eq:constLB}
\E\Big(\tilde\Xi_k\big(\cocyclepss{n-j}{\bar\sigma^{j}\oom}
\cocycle j\om)-
\tilde\Xi_k\big(\cocyclepss{n-j-1}{\bar\sigma^{j+1}\oom}
\cocycle{j+1}\om\big)\Big|B_i\Big)\ge-C.
\end{equation}

In the case $j<i$, $\Delta_j$ is conditioned to be tame.
By Lemma \ref{lem:goodPert}, this is a set of probability (much)
greater than $\frac 12$, so that Lemma \ref{lem:randomnessdoesnthurt}
gives a similar bound to \eqref{eq:constLB}.

Finally, we address the term with $j=i$. Given that $\Delta_i$ is wild, the probability
that the first oversized entry occurs in the $(a,b)$ coordinate is
$O(\exp(-\frac12\epsilon^{-1}(2^{2a+2b}-1)))$ (as seen from the estimate
$\PP(N>t)\approx (2\pi)^{-1/2}e^{-t^2/2}/t$ for large $t$
\cite[Theorem 1.2.3]{Durrett}).

Hence by conditioning and using Lemma \ref{lem:randomnessdoesnthurtbad}, we obtain
\begin{equation}\label{eq:badLB}
\E\Big(\tilde\Xi_k\big(\cocyclepss{m-i}{\bar\sig^i\oom}\cocycle i\om)-
\tilde\Xi_k\big(\cocyclepss{m-i-1}{\bar\sigma^{i+1}\oom}
\cocycle{i+1}\om\big)\Big|B_i\Big)>C(\log\epsilon-1).
\end{equation}
Combining equations \eqref{eq:constLB} and the equation \eqref{eq:badLB}, we obtain the
statement of the proposition.
\end{proof}

\section{Joining good and bad blocks}\label{sec:splitting}

\begin{lem}\label{lem:centred}
For all $k\in\N$, there is a constant $C>0$
 such that for any $A\in\HS$,
any orthogonal projections
$\Pi_1$ and $\Pi_2$ onto $k$-dimensional subspaces, and any
$Q\subset \HS$ such that $\PP(\Delta\in Q)\ge \frac 12$,
one has

$$
\E\Xi_k\big(\Pi_1(A+\Delta)\Pi_2\big|\Delta\in Q\big)
\ge \E\Xi_k\big(\Pi_1\Delta\Pi_2\big|\Delta\in Q\big)-C.
$$
\end{lem}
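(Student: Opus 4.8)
The plan is to reduce this to Lemma~\ref{lem:randomnessdoesnthurt}, which is almost exactly the statement we want except that here we compare $\E\Xi_k(\Pi_1(A+\Delta)\Pi_2 \mid \Delta\in Q)$ against $\E\Xi_k(\Pi_1\Delta\Pi_2\mid\Delta\in Q)$ rather than against $\Xi_k(\Pi_1 A\Pi_2)$. So I would first write
$$
\Xi_k\big(\Pi_1(A+\Delta)\Pi_2\big)-\Xi_k\big(\Pi_1\Delta\Pi_2\big)
=\Big(\Xi_k\big(\Pi_1(A+\Delta)\Pi_2\big)-\Xi_k\big(\Pi_1\Delta\Pi_2\big)\Big)^-
+\Big(\cdots\Big)^+ ,
$$
take conditional expectations, and observe that the positive part contributes something $\ge 0$, so it is enough to lower-bound the conditional expectation of the negative part.

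\textbf{Main step.} To control $\E\big(\big(\Xi_k(\Pi_1(A+\Delta)\Pi_2)-\Xi_k(\Pi_1\Delta\Pi_2)\big)^-\mid \Delta\in Q\big)$ I would apply Lemma~\ref{lem:randomnessdoesnthurt} \emph{with the roles of the fixed operator and the noise reinterpreted}: there, the random operator $T$ plays the role of additive noise on a fixed $A$; here we want to think of $A$ as the fixed perturbation added to the "base'' operator $\Delta$. More precisely, condition on $\Delta\in Q$ and write $\Xi_k(\Pi_1(A+\Delta)\Pi_2)-\Xi_k(\Pi_1\Delta\Pi_2)=\log|\det(I+X)|$ where, after composing $\Pi_1,\Pi_2$ with isometries to/from $\R^k$ as in the proof of Lemma~\ref{lem:randomnessdoesnthurt}, $I+X=(\tilde\Pi_1\Delta\tilde\Pi_2)^{-1}\tilde\Pi_1(A+\Delta)\tilde\Pi_2$, i.e.\ $X=(\tilde\Pi_1\Delta\tilde\Pi_2)^{-1}\tilde\Pi_1 A\tilde\Pi_2$. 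The obstacle is that $X$ is now a \emph{ratio} of a fixed matrix by a Gaussian matrix, not a Gaussian matrix itself, so the clean "$X=\sum N_l B^l$'' structure used in Lemma~\ref{lem:randomnessdoesnthurt} is lost; but what that lemma's proof actually used was only (i) $\log^-|\det(I+rM)|\ge \sum_i\log^-|1-b_i r|$ after factoring the degree-$k$ polynomial $r\mapsto\det(I+rM)$, and (ii) integrability of $\log^-$ against the relevant radial density.

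\textbf{Cleaner route.} Rather than invert $\Delta$, I would telescope in the other direction: set $Y=\tilde\Pi_1 A\tilde\Pi_2$ (fixed, possibly singular) and $Z=\tilde\Pi_1\Delta\tilde\Pi_2$ (random), and note $\det(Y+Z)/\det(Z)=\det(I+Y Z^{-1})$, but to avoid the singular/invertibility issues entirely, instead decompose $\Delta=\Delta_0'+\Delta_0''$ where $\Delta_0'$ is an independent copy with the same law scaled so that $\Delta=_d \tfrac12\Delta' + \tfrac12\Delta''$ via the Gaussian two-point decomposition — wait, a more direct device: apply Lemma~\ref{lem:randomnessdoesnthurt} twice through the triangle-type chain $\Xi_k(\Pi_1(A+\Delta)\Pi_2)\ge \Xi_k(\Pi_1 A\Pi_2)- [\text{drop}_1]$ is the wrong direction. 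So the honest approach is: condition on $\Delta\in Q$; since $\gamma(Q)\ge\frac12$ the conditional law of $Z=\tilde\Pi_1\Delta\tilde\Pi_2$ has density bounded by $2\rho$ where $\rho$ is the (degenerate) Gaussian density of an unconditioned $Z$; write the quantity as $2\int \log^-|\det(Y+Z)| - \log^-|\det Z|$ — actually one must handle both $\log^-$ pieces. The robust way is: both $Z$ and $Y+Z$ are affine-Gaussian $k\times k$ matrices (the second is the first shifted by the constant $Y$), each supported on the same affine subspace's linear part, so $\det(Z)$ and $\det(Y+Z)$ are each polynomials in the $\le k^2$ underlying standard normals; \textbf{I would then invoke the general integrability fact, proved inside Lemma~\ref{lem:randomnessdoesnthurt}, that $\E(\log^-|\det(\text{affine-Gaussian }k\times k)| \mid \text{event of prob}\ge\tfrac12)\ge -C$ for a $C$ depending only on $k$}, applied once to $Y+Z$ (giving $\E(\Xi_k(\Pi_1(A+\Delta)\Pi_2)\mid\Delta\in Q)\ge -C$) and once, via the bounded-density comparison, to estimate $\E(\Xi_k(\Pi_1\Delta\Pi_2)\mid \Delta\in Q)$ — but this second one needs an \emph{upper} bound, which is immediate from $\Xi_k(\Pi_1\Delta\Pi_2)\le k\log\|\Pi_1\Delta\Pi_2\|_{\op}\le k\log\|\Delta\|_{\HS}$ together with integrability of $\log^+\|\Delta\|_{\HS}$ under $\gamma$ (Gaussian tails), hence under the conditioned measure up to the factor $2$. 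Combining: $\E\Xi_k(\Pi_1(A+\Delta)\Pi_2\mid\Delta\in Q)\ge -C$ while $\E\Xi_k(\Pi_1\Delta\Pi_2\mid\Delta\in Q)\le C'$, which is weaker than claimed (it would give $\ge \E\Xi_k(\Pi_1\Delta\Pi_2\mid\cdot) - C - C'$, still of the required form since the difference of the two constants is a constant).

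\textbf{What I expect to be the main obstacle.} The genuine difficulty is that the asserted inequality is uniform over \emph{all} $A\in\HS$ and \emph{all} $k$-dimensional $\Pi_1,\Pi_2$, including $A$ enormous or $\Pi_1 A\Pi_2$ singular, so one cannot route through $\Xi_k(\Pi_1 A\Pi_2)$ (which may be $-\infty$). The point is that adding $A$ can only help: I would make rigorous the intuition "$\E\log^-|\det(Y+Z)|$ is bounded below uniformly in the shift $Y$,'' which follows because for fixed realization of the \emph{direction} of $Z$ (a point on the sphere of the $\le k^2$-dimensional support) the map $r\mapsto\det(Y+rM)$ is again a degree-$\le k$ polynomial whose leading behaviour is governed by $M$ alone, so $\log^-$ of it factors as $\sum_i\log^-|c-b_i r|$ and the $b_i$ don't depend on $Y$; then $F(d,b)$-type bounds from Lemma~\ref{lem:randomnessdoesnthurt}'s proof (which were proved for the shifted-by-$1$ polynomial $1-br$ but work verbatim for $c-br$ after absorbing $c$) give the uniform lower bound. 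This "leading-coefficient'' argument, isolating the $Y$-independent roots, is the crux; everything else is the density-domination ($\le 2\rho$) bookkeeping and the elementary $\log^+\|\Delta\|_{\HS}$ tail estimate, both already used repeatedly above.
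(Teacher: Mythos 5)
Your ``Main step'' correctly identifies the right object --- the ratio $\det(\tilde\Pi_1(A+\Delta)\tilde\Pi_2)/\det(\tilde\Pi_1\Delta\tilde\Pi_2)=\det(I+X)$ --- and this is exactly where the paper's proof lives: after radially disintegrating $\tilde\Delta=r\tilde M$, the ratio $p(r)=\det(\tilde A+r\tilde M)/\det(r\tilde M)$ is a polynomial in $1/r$ of degree at most $k$ with \emph{constant coefficient $1$}, hence $p(r)=\prod_{i\le k}(1-b_i/r)$, and $\int_0^\infty\log^-|1-b/r|\,r^{d-1}e^{-r^2/2}\,dr$ is bounded below uniformly in $b\in\C$ (the negative part is supported on $[b/2,2b]$, where the radial density is controlled). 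But you abandon this route, and the argument you actually commit to --- bounding $\E\big(\Xi_k(\Pi_1(A+\Delta)\Pi_2)\big|\Delta\in Q\big)\ge -C$ and $\E\big(\Xi_k(\Pi_1\Delta\Pi_2)\big|\Delta\in Q\big)\le C'$ \emph{separately} by absolute constants --- cannot work, because neither quantity is uniformly bounded over the stated range of $\Pi_1,\Pi_2$. Take $k=1$, $A=0$ and $\Pi_1=\Pi_2$ the projection onto $\lin\{e_n\}$: then $\Xi_1(\Pi_1\Delta\Pi_2)=\log|3^{-2n}N|$ and both sides of the lemma's inequality equal $-2n\log 3+O(1)\to-\infty$. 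This is precisely the ``energy'' phenomenon of Lemma~\ref{lem:PS}: $\E\Xi_k(\Pi_1\Delta\Pi_2)\approx-\PS_k(\Pi_1)-\PS_k(\Pi_2)$, which is unbounded below on the Grassmannian. The whole content of the lemma is that two possibly very negative quantities differ by $O(1)$; your upper bound $\E(\Xi_k(\Pi_1\Delta\Pi_2)\mid Q)\le C'$ is true but far too lossy, and your lower bound is false as stated (for fixed $\Pi_1,\Pi_2$ it holds with a constant depending on the density bound of $\tilde\Pi_1\Delta\tilde\Pi_2$, which blows up for high-index projections).

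The repair sketched in your last paragraph does not close this gap. The roots of $r\mapsto\det(Y+rM)$ \emph{do} depend on $Y$ --- only the leading coefficient $\det M$ is $Y$-independent --- and after factoring it out you are left with a $\log^-|\det M|$ term whose average over the direction sphere is again of order $-\PS_k(\Pi_1)-\PS_k(\Pi_2)$, hence not uniformly bounded. The scale- and $A$-dependence cancel only if you normalise by $\det(r\tilde M)$, i.e.\ work with the ratio $p(r)$ above, whose value at $r=\infty$ is $1$ regardless of $A$, $\Pi_1$, $\Pi_2$ and $\tilde M$. That normalisation is the missing idea, and it is the one step your proposal writes down and then discards.
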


\begin{proof}
Let $\tilde \Pi_1$ be
an isometry from the range of $\Pi_1$ to $\R^k$. Similarly
let $\tilde \Pi_2$ be the post-composition of $\Pi_2$ with an
isometry from $\R^k$ to the span of the range
of $\Pi_2$. Let $\tilde A=\tilde \Pi_1A\tilde \Pi_2$
and let $\tilde\Delta=\tilde \Pi_1\Delta\tilde \Pi_2$ be
the $k\times k$ multi-variate normal induced from the unconditioned
distribution of $\Delta$.

As in Lemma \ref{lem:randomnessdoesnthurt},
we radially disintegrate the random variables $\tilde\Delta$,
writing $\tilde\Delta$ as $t\tilde M$, where $\tilde M$
belongs to a `unit sphere' equipped with a normalized probability
measure and $t$ having an absolutely continuous
distribution on $[0,\infty)$ with density $r^{k^2-1}e^{-r^2/2}/\Gamma(k^2/2)$.
On conditioning on $\Delta\in Q$, the density is bounded above by
$2r^{k^2-1}e^{-r^2/2}/\Gamma(k^2/2)$
We prove that there is a $C>0$ such that for all $\tilde M$ of rank $k$,
$$
\frac{2}{\Gamma(k^2/2)}\int_0^\infty \Big(\Xi_k(\tilde A+r\tilde M)-
\Xi_k(r\tilde M)\Big)^-r^{k^2-1}
e^{-r^2/2}\,dr > -C.
$$

Notice that since the matrices are $k\times k$, $\Xi_k$ is just
the logarithm of the absolute value of the determinant. Let
$p(r)=\det(\tilde A+r\tilde M)/\det(r\tilde M)$, a polynomial
in powers of $1/r$ of degree at most $k$ with constant coefficient 1.
It can therefore be expressed as
$p(r)=\prod_{i=1}^d (1-b_i/r)$, with $d\le k$.

We are trying to bound
$$
\int_0^\infty \log^-|p(r)|r^{k^2-1}e^{-r^2/2}\,dr\ge
\sum_{i=1}^k \int_0^\infty\log^- |1-b_i/r|r^{k^2-1}e^{-r^2/2}\,dr.
$$

As in the proof of Lemma \ref{lem:integral}, it suffices to give a bound in the case where $b>0$.
We have
$$
\int_0^\infty\log^- |1-b/r|r^{k^2-1}e^{-r^2/2}\,dr
=
\int_{b/2}^\infty \log^-|1-b/r|r^{k^2-1}e^{-r^2/2}\,dr.
$$
The logarithm is bounded below by $-\log 2$ on
$(2b,\infty)$, so that the contribution from this range is
at least $-\Gamma(k^2/2)\log 2$.
For the contribution from the range $[\frac b2,2b]$,
we have a lower bound of $-16(2b)^{k^2-2}e^{-b^2/8}$ (obtained
by bounding $e^{-r^2/2}$ above by $e^{-b^2/8}$).
Hence we obtain the required uniform lower bound.
\end{proof}

The following lemma plays a key role, as it provides an approximate
super-additivity property for $\tilde\Xi_k$ (making strong use of the
nature of the perturbations), complementing the well-known
sub-additivity property of $\Xi_k$.

\begin{lem}\label{lem:join}
There exists $C>0$ such that if $\Delta$ is distributed as above and $Q$ is
any subset of $\HS$ such that
$\PP(Q\in\Delta)\ge \frac 12)$, then
$$
\E\big(\tilde\Xi_k(L(A+\ep\Delta)R)\big|\Delta\in Q\big)
\ge \tilde\Xi_k(L)+\tilde\Xi_k(R)-k|\log\ep|
-C.
$$
\end{lem}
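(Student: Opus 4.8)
The plan is to reduce the claimed inequality to the two previously established tools: the energy estimates of Section \ref{S:Notation} (Lemma \ref{lem:PS} and Corollary \ref{cor:PS}), and the ``randomness doesn't hurt'' estimate of Lemma \ref{lem:randomnessdoesnthurt} (and its centred companion Lemma \ref{lem:centred}). First I would unfold the definition of $\tilde\Xi_k$ on the left-hand side: by definition $\tilde\Xi_k(L(A+\ep\Delta)R)=\E_{\Delta'',\Delta'''}\Xi_k\big(\Pi_k\Delta'' L(A+\ep\Delta)R\Delta'''\Pi_k\big)$, where $\Delta'',\Delta'''$ are fresh independent copies of the perturbation. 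The goal is to split this $k$-volume growth of a long product into a piece governed by $L$ (together with its left projection onto $\Pi_k$), a piece governed by $R$, and an ``interface'' term coming from $(A+\ep\Delta)$ sandwiched between the ranges/coranges determined by $\Delta'' L$ and $R\Delta'''$.

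Concretely, for fixed $\Delta, \Delta'', \Delta'''$, let $\Pi$ be the orthogonal projection onto the orthogonal complement of $\ker(\Pi_k\Delta'' L)$ and $\Pi'$ be the orthogonal projection onto the range of $R\Delta'''\Pi_k$ (exactly as in the proof of Proposition \ref{prop:step2tilde}). Then the multiplicativity of $k$-dimensional determinants along a composition gives
\begin{equation*}
\Xi_k\big(\Pi_k\Delta'' L(A+\ep\Delta)R\Delta'''\Pi_k\big)=
\Xi_k(\Pi_k\Delta'' L)+\Xi_k\big(\Pi(A+\ep\Delta)\Pi'\big)+\Xi_k(R\Delta'''\Pi_k).
\end{equation*}
Taking $\E_{\Delta''}$ of the first term recovers $\tilde\Xi_k(L)$ and taking $\E_{\Delta'''}$ of the last recovers $\tilde\Xi_k(R)$ — these are exactly the definitions — so the entire problem collapses to bounding $\E\big(\Xi_k(\Pi(A+\ep\Delta)\Pi')\,\big|\,\Delta\in Q\big)$ from below by $-k|\log\ep|-C$, uniformly over all $k$-dimensional orthogonal projections $\Pi,\Pi'$. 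Here I would rescale: $A+\ep\Delta=\ep(\ep^{-1}A+\Delta)$, and $\Xi_k$ of a scalar-$\ep$ multiple of an operator restricted to a $k$-dimensional source picks up $k\log\ep=-k|\log\ep|$, leaving $\E\big(\Xi_k(\Pi(\ep^{-1}A+\Delta)\Pi')\,\big|\,\Delta\in Q\big)$ to be bounded below by $-C$. That last bound is precisely Lemma \ref{lem:centred} applied with $A$ replaced by $\ep^{-1}A$ and with the same $Q$ (whose probability is at least $\tfrac12$): it gives $\E\big(\Xi_k(\Pi(\ep^{-1}A+\Delta)\Pi')\,\big|\,\Delta\in Q\big)\ge \E\big(\Xi_k(\Pi\Delta\Pi')\,\big|\,\Delta\in Q\big)-C$, and then Lemma \ref{lem:PS} bounds $\E\big(\Xi_k(\Pi\Delta\Pi')\,\big|\,\Delta\in Q\big)$ below (by $-(\PS_k(\Pi)+\PS_k(\Pi'))-C\ge -C'$, since $\PS_k$ of a $k$-dimensional subspace is nonnegative and in fact bounded? — no, $\PS_k$ can be large, so I should instead keep the energies and note they cancel, see below).

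The subtlety — and the step I expect to be the main obstacle — is that $\PS_k(\Pi)$ and $\PS_k(\Pi')$ are genuinely unbounded as the projections wander off to high modes, so one cannot simply throw them away; they must be shown to cancel against energy that is already implicitly accounted for inside $\tilde\Xi_k(L)$ and $\tilde\Xi_k(R)$. The clean way to handle this is \emph{not} to extract $\Pi,\Pi'$ at all but to keep everything inside the $\Pi_k$-sandwich and observe that $\E_{\Delta''}\Xi_k(\Pi_k\Delta''L)$, $\E_{\Delta'''}\Xi_k(R\Delta'''\Pi_k)$ are honest conditional expectations of the same shape as in Lemma \ref{lem:PS} after conditioning, and that the interface term, once one applies Lemma \ref{lem:centred} in the conditioned-and-rescaled form above followed by Corollary \ref{cor:PS} to re-express $\E\Xi_k(\Pi\Delta\Pi')$ as $\E\Xi_k(\Pi\Delta\Pi_k)+\E\Xi_k(\Pi_k\Delta\Pi')$ up to a constant, telescopes: the $\Pi$-energy glues onto the $\Delta''L$ factor to reconstitute $\tilde\Xi_k(L)$ and the $\Pi'$-energy glues onto $R\Delta'''$ to reconstitute $\tilde\Xi_k(R)$, with only bounded error and the single explicit loss $-k|\log\ep|$ from the rescaling. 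So the order of operations I would follow is: (1) unfold $\tilde\Xi_k$ into the double $\Pi_k$-sandwiched expectation; (2) for fixed auxiliary perturbations, split off $\Xi_k(\Pi_k\Delta''L)$ and $\Xi_k(R\Delta'''\Pi_k)$ using determinant multiplicativity, isolating the interface term $\Xi_k(\Pi(A+\ep\Delta)\Pi')$; (3) pull out $k\log\ep$ by rescaling $A+\ep\Delta=\ep(\ep^{-1}A+\Delta)$; (4) apply Lemma \ref{lem:centred} (conditioned on $\Delta\in Q$) to drop the $\ep^{-1}A$ at cost $C$; (5) apply Lemma \ref{lem:PS}/Corollary \ref{cor:PS} to convert the remaining $\E(\Xi_k(\Pi\Delta\Pi')\mid\Delta\in Q)$ into $\E\Xi_k(\Pi\Delta\Pi_k)+\E\Xi_k(\Pi_k\Delta\Pi')$ up to bounded error; (6) reassemble, absorbing the $\Pi_k$-interface energies back into the $L$- and $R$-factors to recover $\tilde\Xi_k(L)+\tilde\Xi_k(R)$, and collect all $O(1)$ errors into the final constant $C$.
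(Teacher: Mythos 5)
Your proposal is correct and follows essentially the same route as the paper's proof: unfold the $\Pi_k$-sandwich, split off the interface term via determinant multiplicativity, rescale to extract $k\log\ep$, apply Lemma \ref{lem:centred} to discard $\ep^{-1}A$, and then use Corollary \ref{cor:PS} to re-express the interface energies and reabsorb them into the $L$- and $R$-factors. The subtlety you flag (that $\PS_k(\Pi)$, $\PS_k(\Pi')$ are unbounded and must cancel rather than be discarded) is precisely the point of the paper's recombination step, and your resolution of it via Corollary \ref{cor:PS} is the one the paper uses.
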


\begin{proof}
We may assume that $L$ and $R$ have rank at least $k$ as otherwise
there is nothing to
prove. Recalling the definition of $\tilde\Xi$, we have
\begin{align*}
&\E\big(\tilde\Xi_k(L(A+\ep\Delta)R)\big|\Delta\in Q\big)\\
&=\E_{\Delta_1,\Delta_2}\E\big(\Xi_k(\Pi_k\Delta_1 L(A+\ep\Delta)R \Delta_2\Pi_k)
\big|\Delta\in Q\big)\text{ and}\\
&\E\big(\tilde\Xi_k( L(\ep\Delta) R)\big|\Delta\in Q\big)
=\E_{\Delta_1,\Delta_2}\E\big(\Xi_k( \Pi_k\Delta_1 L(\ep\Delta) R
\Delta_2\Pi_k)\big|\Delta\in Q\big)
\end{align*}
We first show that for fixed $\Delta_1$ and $\Delta_2$,
\begin{equation}\label{eq:lemjoin-toprove}
\begin{split}
&\E\big(\Xi_k(\Pi_k\Delta_1 L(A+\ep\Delta)R \Delta_2\Pi_k)
\big|\Delta\in Q\big)\\
&\ge
\E\big(\Xi_k( \Pi_k\Delta_1 L(\ep\Delta) R \Delta_2\Pi_k)\big|\Delta\in Q\big)-C.
\end{split}
\end{equation}
We have
$
\Xi_k(\Pi_k\Delta_1 L(A+\ep\Delta)R \Delta_2\Pi_k)
=\Xi_k(\Pi_k\Delta_1 L)+\Xi_k(\overline{\Pi}(A+\ep\Delta)\overline{\overline{\Pi}})
+\Xi_k(R\Delta_2\Pi_k)$
and
$\Xi_k(\Pi_k\Delta_1 L(\ep\Delta)R \Delta_2\Pi_k)
=\Xi_k(\Pi_k\Delta_1 L)+\Xi_k(\overline \Pi(\ep\Delta)\overline{\overline{\Pi}})
+\Xi_k(R\Delta_2\Pi_k)$, where $\overline\Pi$ is the orthogonal
projection onto the $k$-dimensional
orthogonal complement of the kernel of $\Pi_k\Delta_1 L$ and
$\overline{\overline\Pi}$
is the orthogonal projection onto the range of $R\Delta_2\Pi_k$.
Hence
\begin{align*}
&\Xi_k(\Pi_k\Delta_1 L(A+\ep\Delta)R \Delta_2\Pi_k)-
\Xi_k(\Pi_k\Delta_1 L(\ep\Delta)R \Delta_2\Pi_k)\\
&=
\Xi_k(\overline{\Pi}(A+\ep\Delta)\overline{\overline{\Pi}})-
\Xi_k(\overline \Pi(\ep\Delta)\overline{\overline{\Pi}})\\
&=\Xi_k(\overline{\Pi}(\tfrac1\ep A+\Delta)\overline{\overline{\Pi}})-
\Xi_k(\overline \Pi\Delta\overline{\overline{\Pi}}).
\end{align*}
Taking an expectation as $\Delta$ runs over $Q$ and using Lemma \ref{lem:centred},
we obtain \eqref{eq:lemjoin-toprove}. Hence, taking the expectation over $\Delta_1$
and $\Delta_2$, we have
\begin{align*}
\E\big(\tilde\Xi_k(L(A+\ep\Delta)R)\big|\Delta\in Q\big)&\ge
\E\big(\tilde\Xi_k( L(\ep\Delta) R)\big|\Delta\in Q\big)-C\\
&=\E\big(\tilde\Xi_k( L\Delta R)\big|\Delta\in Q\big)-C+k\log\ep.
\end{align*}

For the last part of the argument,
we have
\begin{align*}
&\E\big(\tilde\Xi_k( L\Delta R)\big|\Delta\in Q\big)=
\E_{\Delta_1,\Delta_2}\E_\Delta\big(\Xi_k(\Pi_k\Delta_1L\Delta R\Delta_2\Pi_k)
\big|\Delta\in Q\big)\\
&=\E_{\Delta_1,\Delta_2}\Big(
\Xi_k(\Pi_k\Delta_1L\overline\Pi)+
\E_\Delta\big(\Xi_k(\overline\Pi\Delta\overline{\overline\Pi})\big|\Delta\in Q\big)
+\Xi_k(\overline{\overline\Pi}R\Delta_2\Pi_k)\Big),
\end{align*}
where $\overline\Pi$ and $\overline{\overline\Pi}$ are as above.
By Corollary \ref{cor:PS}, the middle term is $\E_{\Delta_3}
\Xi_k(\overline\Pi\Delta_3\Pi_k)+\E_{\Delta_4}
\Xi_k(\Pi_k\Delta_4\overline{\overline \Pi})\pm C$. Substituting
and recombining the expressions, we get
\begin{align*}
&\E\big(\tilde\Xi_k( L(A+\epsilon\Delta) R)\big|\Delta\in Q\big)\\
&\ge \E_{\Delta_1,\Delta_3}\Xi_k(\Pi_k\Delta_1L\Delta_3\Pi_k)+
\E_{\Delta_2,\Delta_4}\Xi_k(\Pi_k\Delta_4R\Delta_2\Pi_k)- C+k\log\epsilon\\
&=\tilde\Xi_k(L)+\tilde\Xi_k(R)-C+k\log\epsilon,
\end{align*}
as required.
\end{proof}

Since the statement includes the case where $\Delta$ is conditioned to
lie in a large set,
this is sufficient to cover the case where $\Delta$ is conditioned to be tame.
We need a version of this inequality to deal with the case where
$\Delta$ is constrained to be wild.

\begin{lem}\label{lem:Mahler}
There exists $C>0$ such that for all polynomials, $p(x)$,
one has
$$
\left|
\int_{-\infty}^\infty\frac{e^{-x^2/2}}{\sqrt{2\pi}}\log |p(x)|\,dx
-\log M(p)\right|
\le C\deg(p),
$$
where $M(p)$ is the Mahler measure of $p$: If $p(x)=a(x-z_1)(x-z_2)\cdots
(x-z_k)$, then $M(p)=a\prod_{|z_i|>1}|z_i|$.
\end{lem}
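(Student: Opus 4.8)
The plan is to reduce the inequality to a one-root estimate that is uniform in the root and in the degree. Write $p(x)=a\prod_{i=1}^k(x-z_i)$ with $k=\deg p$ and $a\ne 0$, so that
$$
\int_{-\infty}^\infty\frac{e^{-x^2/2}}{\sqrt{2\pi}}\log|p(x)|\,dx=\log|a|+\sum_{i=1}^k\Phi(z_i),
\qquad
\log M(p)=\log|a|+\sum_{i=1}^k\log^+|z_i|,
$$
where $\Phi(z):=\int_{-\infty}^\infty\frac{e^{-x^2/2}}{\sqrt{2\pi}}\log|x-z|\,dx$ is finite (since $\log$ is locally integrable and the Gaussian has all moments) and $\log^+t:=\max(\log t,0)$. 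The $\log|a|$ terms cancel, so it is enough to produce a constant $C$, independent of $z$ and of $k$, with $\bigl|\Phi(z)-\log^+|z|\bigr|\le C$ for every $z\in\C$; summing $k$ copies of this inequality then yields the lemma.

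For the upper bound I would use $|x-z|\le|x|+|z|$. When $|z|\ge 1$ this gives $|x-z|\le|z|(1+|x|)$, hence $\Phi(z)\le\log|z|+\int\frac{e^{-x^2/2}}{\sqrt{2\pi}}\log(1+|x|)\,dx=\log^+|z|+C_1$; when $|z|<1$ one has $|x-z|\le 1+|x|$ directly, so $\Phi(z)\le C_1=\log^+|z|+C_1$.

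For the lower bound I would first record a crude bound valid for every $z$: since $|x-z|\ge|x-\real z|$,
$$
\Phi(z)\ge-\int\frac{e^{-x^2/2}}{\sqrt{2\pi}}\log^-|x-z|\,dx\ge-\frac1{\sqrt{2\pi}}\int_{\real z-1}^{\real z+1}\bigl(-\log|x-\real z|\bigr)\,dx=-\frac{2}{\sqrt{2\pi}}.
$$
Next, for real $t\ge 2$, I would split $\R$ into $\{|x|\le t/2\}$, where $|x-t|\ge t/2\ge 1$ so $\log|x-t|\ge\log(t/2)\ge 0$ while the Gaussian carries mass $\ge 1-2e^{-t^2/8}$, and the complement, whose contribution is at least $-2/\sqrt{2\pi}$ by the crude bound; this gives $\Phi(t)\ge\log t-C_2$. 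Combined with the crude bound and with $\log^+t\le\log 2$ for $t<2$, this gives $\Phi(t)\ge\log^+t-C_3$ for all real $t$. Finally, for general $z$ I would use $|x-z|\ge\max(|x-\real z|,|\operatorname{Im}z|)$ together with $\log^+|z|\le\log 2+\max(\log^+|\real z|,\log^+|\operatorname{Im}z|)$: if $|\real z|\ge|\operatorname{Im}z|$ then $\Phi(z)\ge\Phi(\real z)\ge\log^+|\real z|-C_3\ge\log^+|z|-\log 2-C_3$; if $|\operatorname{Im}z|>|\real z|$ and $|\operatorname{Im}z|\ge 1$ then $\Phi(z)\ge\log|\operatorname{Im}z|=\log^+|\operatorname{Im}z|\ge\log^+|z|-\log 2$; and if $|\operatorname{Im}z|>|\real z|$ with $|\operatorname{Im}z|<1$ then $|z|<\sqrt2$, so $\log^+|z|<\log 2$ and the crude bound already suffices. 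In every case $\Phi(z)\ge\log^+|z|-C_4$, which together with the upper bound gives the one-root estimate, and hence the lemma.

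The only genuinely delicate point is the lower bound near the singularity $x=z$ when $z$ lies close to the real axis: there $\log|p|$ is strongly negative on a small set, and one must show this costs only $O(1)$ per root while still extracting the $+\log^+|z|$ growth from the bulk of the Gaussian mass. The split at $\{|x|\le t/2\}$ is exactly what separates these two effects, using integrability of $\log$ to absorb the near-singularity contribution into a universal constant; everything else is an elementary estimate on the standard Gaussian.
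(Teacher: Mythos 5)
Your proposal is correct and follows the same route as the paper: factor $p$, cancel $\log|a|$, and reduce to the uniform one-root estimate $\bigl|\E\log|N-z|-\log^+|z|\bigr|\le C$. The paper only sketches that estimate ("we will not give all the details"), whereas you supply a complete verification, including the near-axis singularity and the split at $\{|x|\le t/2\}$; the only quibble is that your $\log^-$ convention (nonnegative, equal to $\max(-\log t,0)$) differs from the paper's $x^-=\min(x,0)$, but your chain of inequalities is internally consistent.
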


\begin{proof}
Write $p(x)$ as $a(x-z_1)\cdots (x-z_k)$. The inequality then follows from
$$
\left|\int_{-\infty}^\infty \frac{e^{-x^2/2}}{\sqrt{2\pi}}\log|x-z|\,dx
-\log^+|z|\right|\le C.
$$
While we will not give all the details, the idea is to notice that the integral can be
expressed as $\E\log|N-z|$ where $N$ is a standard normal random variable.
If $z$ is small, then this is the integral of a function with a logarithmic singularity.
If $z$ is large, then since $N$ is concentrated near 0, the integrand
is close to $\log|z|$ with very high probability.
\end{proof}

\begin{lem}\label{lem:Mahler2}
For each $k>0$, there exists a constant $C$ such that for each
polynomial $p(x)=\sum_{i=0}^k a_ix^i$, one has
$$
\big|\log M(p)-\max\log|a_i|\big|\le C.
$$
\end{lem}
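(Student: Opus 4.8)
The plan is to prove the two one-sided estimates $\log M(p)\le \max_i\log|a_i|+\log(k+1)$ and $\max_i\log|a_i|\le \log M(p)+k\log 2$ separately; since $k+1\le 2^k$ for $k\ge 1$, together these give the lemma with $C=k\log 2$. Throughout one may assume $p\not\equiv 0$, and write $m=\deg p\le k$, so that $p(x)=a_m(x-z_1)\cdots(x-z_m)$ with $a_m\ne 0$ and $M(p)=|a_m|\prod_{|z_\ell|>1}|z_\ell|$.

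For the upper bound on $\log M(p)$ I would invoke Jensen's formula in the form $\log M(p)=\frac{1}{2\pi}\int_0^{2\pi}\log|p(e^{i\theta})|\,d\theta$ (which itself reduces to the standard identity $\frac1{2\pi}\int_0^{2\pi}\log|e^{i\theta}-z|\,d\theta=\log^+|z|$ applied to each linear factor). Since $|p(e^{i\theta})|\le\sum_{i=0}^k|a_i|\le(k+1)\max_i|a_i|$ for every $\theta$, taking logarithms and integrating gives $\log M(p)\le\log(k+1)+\max_i\log|a_i|$ immediately.

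For the lower bound I would express the coefficients through the roots: $a_i=\pm\,a_m\,e_{m-i}(z_1,\dots,z_m)$, where $e_j$ is the $j$th elementary symmetric polynomial, a sum of $\binom{m}{j}\le 2^m$ products of $j$ distinct $|z_\ell|$. Each such product is at most $\prod_{|z_\ell|>1}|z_\ell|=M(p)/|a_m|$, since discarding the factors with $|z_\ell|\le 1$ only increases it. Hence $|a_i|\le |a_m|\binom{m}{m-i}\cdot M(p)/|a_m|=\binom{m}{m-i}M(p)\le 2^k M(p)$ for every $i$, and taking the maximum over $i$ and then logarithms yields $\max_i\log|a_i|\le k\log 2+\log M(p)$.

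The argument is elementary and I do not anticipate a genuine obstacle; the only points that deserve a word of care are the degenerate cases in which $p$ has degree strictly below $k$ or has a root at the origin. In both cases one simply works with the true degree $m\le k$ as above: Jensen's formula and the elementary-symmetric-function identity hold verbatim, the bounds only improve, and the constant $C=k\log 2$ continues to work. (The case $p\equiv 0$ is vacuous, both sides being $-\infty$.)
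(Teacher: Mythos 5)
Your argument is correct. Note that the paper does not actually prove this lemma: it simply cites Lang (Theorem 2.8), so you have supplied the self-contained argument that the paper omits. What you give is in fact the standard proof of the comparison between Mahler measure and height: Jensen's formula $\log M(p)=\frac{1}{2\pi}\int_0^{2\pi}\log|p(e^{i\theta})|\,d\theta$ yields $\log M(p)\le\log(k+1)+\max_i\log|a_i|$ via the trivial pointwise bound on $|p(e^{i\theta})|$, and expressing $a_i$ through elementary symmetric functions of the roots, with each monomial bounded by $\prod_{|z_\ell|>1}|z_\ell|=M(p)/|a_m|$ and at most $\binom{m}{m-i}\le 2^k$ monomials, yields the reverse inequality $\max_i\log|a_i|\le k\log 2+\log M(p)$. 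Your handling of the degenerate cases (degree below $k$, roots at the origin, $p\equiv 0$) is also fine; in particular the factor-by-factor form of Jensen's identity, $\frac{1}{2\pi}\int_0^{2\pi}\log|e^{i\theta}-z|\,d\theta=\log^+|z|$, remains valid at $z=0$. The explicit constant $C=k\log 2$ is more than adequate for the paper's purposes, where only the existence of a degree-dependent constant is used.
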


The proof can be found in Lang's book \cite[Theorem 2.8]{Lang}.

\begin{lem}\label{lem:intlog}
Let $\Lambda>2$ and let $N$ be a standard normal random variable. There exists a $C>0$
such that for all $a,b\in\C$,
\begin{equation*}
\E\Big(\log|a+bN|\Big|N>\Lambda\Big)\ge
\max(\log|a|,\log|b|)-C\log\Lambda.
\end{equation*}
\end{lem}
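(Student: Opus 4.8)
The statement is a conditional version of Lemma~\ref{lem:integral}: rather than bounding $\E(\log^-|1-aN|\mid N>\Lambda)$ below, we want a two-sided-looking lower bound for $\E(\log|a+bN|\mid N>\Lambda)$ in terms of $\max(\log|a|,\log|b|)$. First I would reduce to the case $a,b\ge 0$: since $\log|a+bN|\ge\log|{\real(a)}+\real(b)N|$ is false in general, but $|a+bN|^2=|a|^2+2\real(\bar a b)N+|b|^2N^2\ge (\,|a|-|b|N\,)^2$ is not quite what is needed either, the cleanest route is to observe that for $N>\Lambda>0$ we have $|a+bN|\ge \big|\,|b|N-|a|\,\big|$ only when the phases conspire, so instead I would write $a+bN = b(N+a/b)$ when $b\ne 0$ and apply Lemma~\ref{lem:intlog}'s predecessors to $N+z$ with $z=a/b$; when $b=0$ the claim is immediate. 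Thus it suffices to bound $\E(\log|N-z|\mid N>\Lambda)$ below by $\log^+|z|-C\log\Lambda$, which after adding $\log|b|$ gives $\log|b|+\log^+|a/b|=\max(\log|a|,\log|b|)$ up to the $C\log\Lambda$ error.

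**Key steps.** (1) Handle $b=0$ trivially; assume $b\ne 0$ and set $z=-a/b$, reducing to $\E(\log|N-z|\mid N>\Lambda)\ge\log^+|z|-C\log\Lambda$. (2) Split on the location of $z$. If $|z|\le 2\Lambda$, then for $N>\Lambda$ the only place $|N-z|$ can be small is a bounded interval near $\real(z)$ of length $O(1/\Lambda^2)$ on which the conditional density is controlled; use the integral inequality \eqref{eq:logint} exactly as in Lemma~\ref{lem:integral} (with $J$ the interval where $\log|N-z|<-2\log\Lambda$ and $I\setminus J$ giving a bound $\ge-2\log\Lambda$), and use the lower bound $\int_\Lambda^\infty e^{-x^2/2}\,dx\ge e^{-\Lambda^2/2}/(2\Lambda)$ from \cite[Theorem~1.2.3]{Durrett} to control the normalization; here $\log^+|z|\le\log(2\Lambda)$ so the target is absorbed into $C\log\Lambda$. (3) If $|z|>2\Lambda$, then on the event $N>\Lambda$ one typically has $N=O(1)$-away... no: conditionally $N$ is concentrated just above $\Lambda$, so $|N-z|\ge |z|-N$; the issue is that $N$ ranges over $(\Lambda,\infty)$ and can be comparable to or exceed $|z|$. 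Partition $(\Lambda,\infty)$ into $\{N\le |z|/2\}\cup\{|z|/2<N<2|z|\}\cup\{N\ge 2|z|\}$: on the first and third pieces $\log|N-z|\ge\log(|z|/2)$, and the conditional mass of the middle piece, multiplied by its worst-case contribution (again via \eqref{eq:logint} and a Gaussian density comparison $e^{-x^2/2}\le e^{-|z|^2/8}$ there), is negligible compared to $e^{-\Lambda^2/2}/(2\Lambda)$ since $|z|>2\Lambda$; so $\E(\log|N-z|\mid N>\Lambda)\ge\log(|z|/2)-C=\log^+|z|-C$, which is stronger than needed.

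**Main obstacle.** The delicate case is the third piece of step~(3), where $N$ can be of the same order as $|z|$: one must show the conditional probability of $\{|z|/2<N<2|z|\}$ times the (potentially very negative) logarithmic contribution there stays bounded. The worst sub-case is $|z|$ only slightly larger than $\Lambda$, where this event has non-negligible conditional mass; but then $\log^+|z|$ is itself only $O(\log\Lambda)$, so the bound $-C\log\Lambda$ has room to spare, and the argument of Lemma~\ref{lem:integral} for the regime $1/(3\Lambda)\le a\le 2/\Lambda$ transfers almost verbatim after the substitution $z\leftrightarrow 1/a$. Everything else is a routine recombination of Gaussian tail estimates with the elementary integral inequality \eqref{eq:logint}; no genuinely new idea beyond Lemma~\ref{lem:integral} is required, and in fact one can alternatively derive this lemma directly from Lemma~\ref{lem:integral} applied to $\log^-|1-(b/a)N|$ together with the trivial bound $\log|a+bN|=\log|a|+\log|1+(b/a)N|\ge\log|a|-\log^-|1+(b/a)N|^{-1}$, which I would present as the short proof if the constants work out cleanly.
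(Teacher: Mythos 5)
Your main route --- factor out $b$ and reduce to $\E(\log|N-z|\mid N>\Lambda)\ge\log^+|z|-C\log\Lambda$ with $z=-a/b$ --- is a correct reduction, since $\log|b|+\log^+|a/b|=\max(\log|a|,\log|b|)$. The paper's proof is much shorter: it splits on $|a|$ versus $|b|$, factoring out $a$ and quoting Lemma~\ref{lem:integral} when $|a|>|b|$, and observing that $|a+bN|\ge |b|N-|a|\ge|b|(\Lambda-1)\ge|b|\Lambda/2$ pointwise on the event when $|b|\ge|a|$, so that no integral estimate is needed in that case. Your reduction target can also be obtained from Lemma~\ref{lem:integral} with no new integral work: for $|z|\ge 1$ write $\log|N-z|=\log|z|+\log|1-N/z|\ge\log|z|+\log^-|1-N/z|$ and apply the lemma with coefficient $1/z$; for $|z|<1$ one has $|N-z|\ge\Lambda-1\ge 1$ pointwise. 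Done that way, your argument is complete and is essentially a repackaging of the paper's.

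Two concrete problems with what you actually propose. First, in step~(3) the comparison of $e^{-x^2/2}\le e^{-|z|^2/8}$ on the whole middle piece against the normaliser $\int_\Lambda^\infty e^{-x^2/2}\,dx\ge e^{-\Lambda^2/2}/(2\Lambda)$ is \emph{not} "negligible" when $|z|$ is near $2\Lambda$: it gives a lower bound of order $-\Lambda e^{\Lambda^2/2-|z|^2/8}\ge -C\Lambda$, which is far worse than $-C\log\Lambda$. The estimate is rescued by localising: $\log^-|x-z|$ is supported on an interval of length $2$ about $\real z$, and (after discarding the trivial cases $|\text{Im}\,z|\ge 1$ or $\real z\le\Lambda$) one has $\real z-1\ge 2\Lambda-2$, so the relevant density there is at most $e^{-(2\Lambda-2)^2/2}$; since $(2\Lambda-2)^2-\Lambda^2=(3\Lambda-2)(\Lambda-2)$ is nonnegative and grows quadratically, the middle piece in fact contributes only $O(1)$. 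As written, though, your step~(3) does not close. Second, the "short proof" you float at the end --- factor out $a$ and apply Lemma~\ref{lem:integral} to $\log^-|1+(b/a)N|$ --- yields only $\log|a|-C\log\Lambda$ and therefore fails to prove the stated bound when $|b|>|a|$; it must be supplemented by the complementary pointwise case above, exactly as the paper does.
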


\begin{proof}
The case where $|a|>|b|$ follows from Lemma \ref{lem:integral} (writing
$\log|a+bN|=\log|a|+\log|1+\frac baN|$). If $|b|\ge|a|$, then $|a+bN|\ge |b|\Lambda/2$
whenever $N>\Lambda$. The result follows.
\end{proof}

\begin{lem}\label{lem:joinbad}
There exists a constant $C>0$ such that for all $i,j$,
\begin{align*}
&\E\tilde\Xi_k(L(A+\ep\Delta)R\big|\mathsf{Wild}_{i,j})\\
\ge\, &\tilde\Xi_k(L)+\tilde\Xi_k(R)-C|\log\ep|-C(i+j+1),
\end{align*}
where $\mathsf{Wild}_{i,j}$ is the event that $|\Delta_{i,j}|\ge
(\frac 23)^{i+j}\epsilon^{-1/2}$
and $|\Delta_{a,b}|<(\frac 23)^{a+b}\epsilon^{-1/2}$ for all pairs
$(a,b)$ that are lexicographically
smaller than $(i,j)$.
\end{lem}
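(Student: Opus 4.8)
The plan is to re-run the argument of Lemma~\ref{lem:join}, keeping track of the price paid for the wild conditioning. First I would replace the event $\mathsf{Wild}_{i,j}$ by the simpler event $B_{i,j}=\{\,|\Delta_{i,j}|\ge(\tfrac23)^{i+j}\epsilon^{-1/2}\,\}$, which imposes no constraint on the other entries. As in Lemmas~\ref{lem:goodPert} and \ref{lem:randomnessdoesnthurtbad}, $\PP(\mathsf{Wild}_{i,j}\mid B_{i,j})>\tfrac12$; so writing $g=\tilde\Xi_k(L(A+\epsilon\Delta)R)-\tilde\Xi_k(L)-\tilde\Xi_k(R)$ and using $\mathsf{Wild}_{i,j}\subseteq B_{i,j}$ together with $g\ge g^-$ and $g^-\le0$, one gets $\E(g\mid\mathsf{Wild}_{i,j})\ge 2\,\E(g^-\mid B_{i,j})$, so it suffices to prove $\E(g^-\mid B_{i,j})\ge -C(|\log\epsilon|+i+j+1)$. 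Taking $\Delta_{i,j}>0$ (the negative case being identical), on $B_{i,j}$ write $\Delta=\Delta^\flat+\Delta_{i,j}E_{i,j}$, with $E_{i,j}$ the $(i,j)$ matrix unit, $\Delta^\flat$ carrying the other (unconditioned, independent) entries, and $\Delta_{i,j}=3^{-(i+j)}N$ with $N$ a standard normal conditioned on $N\ge\Lambda:=2^{i+j}\epsilon^{-1/2}$, so that $\log\Lambda\le C(i+j+|\log\epsilon|)$.

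The mechanism is that $\Delta_{i,j}$ enters rank-one-ly. Expanding $\tilde\Xi_k$ through its defining auxiliary perturbations and, for fixed auxiliary perturbations, applying the multiplicative identity of Lemma~\ref{lem:join}, every $\Xi_k$-expression that occurs, written via isometries as a $k\times k$ determinant $\det(X)$ (as in the proof of Lemma~\ref{lem:PS}), has $X$ of the form $X^\flat+\epsilon 3^{-(i+j)}N\,\Theta$, where $X^\flat$ is the same matrix with $\Delta^\flat$ substituted for $\Delta$ and $\Theta$ has rank $\le1$ and operator norm $\le1$; by the matrix-determinant lemma $\det(X)=c_0+c_1N$ is a linear function of $N$, with $c_0=\det X^\flat$. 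Hence, in each of the $O(k)$ places where the proof of Lemma~\ref{lem:join} invokes its two inputs --- the comparison of $\Xi_k(\Pi_1(A+\Delta)\Pi_2)$ with $\Xi_k(\Pi_1\Delta\Pi_2)$ (Lemma~\ref{lem:centred}), and the energy-splitting of $\Xi_k(\Pi\Delta\Pi')$ (Corollary~\ref{cor:PS}) --- integrating $\Delta_{i,j}$ out costs only $O(\log\Lambda)$: Lemma~\ref{lem:intlog} gives $\E_N(\log|c_0+c_1N|\mid N\ge\Lambda)\ge\log|c_0|-C\log\Lambda$ where a lower bound on a conditional expectation is needed, and Lemma~\ref{lem:integral} gives $\E_N((\log|c_0+c_1N|-\log|c_0|)^-\mid N\ge\Lambda)\ge-C\log\Lambda$ for the corresponding negative-part statements. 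Combined with the unchanged scaling identity $\tilde\Xi_k(L(\epsilon\Delta)R)=\tilde\Xi_k(L\Delta R)-k|\log\epsilon|$, this reduces everything to the \emph{unconditioned} inequality of Lemma~\ref{lem:join} applied with $\Delta^\flat$ in place of $\Delta$, at the cost of the extra term $-C(i+j+|\log\epsilon|)$, which is precisely the claim.

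I expect the main obstacle to be this last step: passing from $\Delta^\flat$ back to the setting of Lemma~\ref{lem:join}, equivalently, establishing the conditional analogs of Lemma~\ref{lem:PS} and Corollary~\ref{cor:PS}. Deleting the $(i,j)$ entry destroys the exact tensor-product form of the covariance of the matrix $M=(\langle u_p,\Delta v_q\rangle)$ on which Lemma~\ref{lem:PS} rests: it introduces a rank-one correction of order $9^{-(i+j)}$, which is not uniformly negligible against the diagonal variances $\|\diagpar3 u_p\|^2\|\diagpar3 v_q\|^2$ when the singular directions $u_p,v_q$ concentrate on high modes. One must verify that this distorts $\E\,\Xi_k(\Pi\Delta^\flat\Pi')$ away from $-(\PS_k(\Pi)+\PS_k(\Pi'))$ by at most $O(i+j+|\log\epsilon|)$, uniformly in $\Pi,\Pi'$ --- via the Mahler-measure estimates of Lemmas~\ref{lem:Mahler} and \ref{lem:Mahler2} applied coordinate by coordinate, and the observation that a small $\|\diagpar3 u_p\|$ automatically forces a large $\PS_k(\Pi)$, so that the two sides of the estimate deteriorate together. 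Once that is in place, the reassembly follows Lemma~\ref{lem:join} line by line.
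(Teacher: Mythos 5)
Your toolbox is the right one (the rank-one/linear-in-$N$ determinant observation, Lemmas~\ref{lem:intlog}, \ref{lem:Mahler}, \ref{lem:Mahler2}, and the probability-$\frac12$ trick to pass from $\mathsf{Wild}_{i,j}$ to $B_{i,j}$), but the architecture of your comparison has a genuine gap --- the one you flag at the end, and it is not merely a technical verification to be filled in. You integrate out the wild entry by comparing $\E_N(\log|c_0+c_1N| \mid N\ge\Lambda)$ with $\log|c_0|=\log|\det X^\flat|$, i.e.\ with the value at $N=0$, and then try to land on ``Lemma~\ref{lem:join} with $\Delta^\flat$ in place of $\Delta$.'' That target statement is false in general: deleting the $(i,j)$ entry destroys the tensor-product covariance structure on which Lemma~\ref{lem:PS} (hence Corollary~\ref{cor:PS}, hence Lemma~\ref{lem:join}) rests, and in degenerate configurations --- e.g.\ when the relevant singular directions concentrate on the modes $e_i$ and $e_j$, so that the $k\times k$ matrix $\Pi\Delta^\flat\Pi'$ is singular with positive probability --- one has $\E\,\Xi_k(\Pi\Delta^\flat\Pi')=-\infty$ while $\tilde\Xi_k(L)+\tilde\Xi_k(R)$ is finite. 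So the final link in your chain cannot be established uniformly in $\Pi,\Pi'$, and your sketched repair (``the two sides of the estimate deteriorate together'') is exactly the hard part left undone. Keeping only $\log|c_0|$ out of the $\max(\log|c_0|,\log|c_1|)$ that Lemma~\ref{lem:intlog} actually provides is where the information carried by the wild entry is thrown away.

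The paper's proof avoids this entirely by choosing a different comparison object: instead of $\Delta^\flat$ (wild entry set to zero), it compares $\E(\cdot\mid\mathsf{Wild}_{i,j})$ with $\E(\cdot\mid\mathsf{Tame}_{i,j})$, where $\mathsf{Tame}_{i,j}$ only constrains the lexicographically smaller entries and leaves $\Delta_{i,j}$ with its full (unconditioned) normal distribution. Fixing all other entries, one compares $\E(\log|a+bN|\mid N>\Lambda)$ with $\E\log|a+bN|$ over the \emph{full} normal $N$: the first is at least $\max(\log|a|,\log|b|)-C\log\Lambda$ by Lemma~\ref{lem:intlog}, and the second is within $C$ of $\max(\log|a|,\log|b|)$ by Lemmas~\ref{lem:Mahler} and \ref{lem:Mahler2} --- so the $\max$ is retained on both sides and the degenerate case $a=0$ causes no loss. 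Since $\mathsf{Tame}_{i,j}$ has probability at least $\frac12$, Lemma~\ref{lem:join} then applies verbatim to $\E\tilde\Xi_k(L(A+\ep\Delta)R\mid\mathsf{Tame}_{i,j})$, with no need for any conditional analogue of Lemma~\ref{lem:PS} for a modified Gaussian. If you replace your $\Delta^\flat$ reduction by this Wild-versus-Tame comparison, the rest of your outline goes through.
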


\begin{proof}
As in the proof of Lemma \ref{lem:join}, the proof reduces
to showing a version of Lemma \ref{lem:centred}:
$$
\E\Xi_k\big(\Pi_1(A+\epsilon\Delta)\Pi_2\big|\mathsf{Wild}_{i,j}\big)
\ge \E\Xi_k(\Pi_1\ep\Delta\Pi_2)-C(i+j+1).
$$
We first compare
$\E\Xi_k\big(\Pi_1(A+\epsilon\Delta)\Pi_2\big|\mathsf{Wild}_{i,j}\big)$
to $\E\Xi_k\big(\Pi_1(A+\epsilon\Delta)\Pi_2\big|\mathsf{Tame}_{i,j}\big)$,
where $\mathsf{Tame}_{i,j}$ is the event that
$|\Delta_{a,b}|<(\frac 23)^{a+b}\epsilon^{-1/2}$ for all pairs
$(a,b)$ that are lexicographically
smaller than $(i,j)$. Fixing all entries of $\Delta$ other than
$\Delta_{i,j}$, this amounts to
comparing $\E\big(\log|\det (B+NZ)|\big|N>2^{i+j}\epsilon^{-1/2}\big)$
to $\E\big(\log|\det (B+NZ)|\big)$, where $B$ is an invertible $k\times k$ matrix and
$Z$ is rank 1. As pointed out in Lemma \ref{lem:randomnessdoesnthurtbad},
$\det(B+NZ)=a+bN$ for constants
$a$ and $b$, so that
it suffices to compare $\E\big(\log|a+bN|\big|N>2^{i+j}\epsilon^{-1/2}\big)$
to $\E\log|a+bN|$. By Lemma \ref{lem:intlog}, the first of these is at least
$\max(\log|a|,\log|b|)-C(i+j+\log\epsilon)$ and by Lemmas \ref{lem:Mahler}
and \ref{lem:Mahler2}, the second of these is within $C$ of $\max(\log|a|,\log|b|)$.
We deduce that
$$
\E\Xi_k\big(\Pi_1(A+\epsilon\Delta)\Pi_2\big|\mathsf{Wild}_{i,j}\big)
>
\E\Xi_k\big(\Pi_1(A+\epsilon\Delta)\Pi_2\big|\mathsf{Tame}_{i,j}\big)
-C(i+j+\log\epsilon).
$$
Hence, using the same cancellation argument that occurs in Lemma \ref{lem:join}, we have
$$
\E\tilde\Xi_k(L(A+\ep\Delta)R|\textsf{Wild}_{i,j})
\ge
\E\tilde\Xi_k(L(A+\ep\Delta)R|\textsf{Tame}_{i,j})-C(i+j+\log\ep).
$$
Finally
using Lemma \ref{lem:join} to bound $\E\tilde\Xi_k(L(A+\ep\Delta)R|\textsf{Tame}_{i,j})$,
 the result follows.
\end{proof}

\begin{prop}
\label{prop:splitting}
There exists $C_{\ref{prop:splitting}}>0$ with the following property:
Let $L$, $R$, and $A$ be Hilbert-Schmidt operators and let $\Delta$ be the
multivariate normal perturbation described earlier. Then each of
$\E\tilde\Xi_k(L(A+\epsilon\Delta)R)$, $\E\big(\tilde\Xi_k(L(A+\epsilon\Delta)R)
\big|\Delta\text{ is wild}\big)$ and
$\E\big(\tilde\Xi_k(L(A+\epsilon\Delta)R)\big|\Delta\text{ is tame}\big)$
is bounded below by $\tilde\Xi_k(L)+\tilde\Xi_k(R)+
C_{\ref{prop:splitting}}\log\epsilon$.
\end{prop}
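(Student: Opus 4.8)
\emph{Proof plan.} The plan is to handle the three quantities separately, reducing each to a gluing lemma already established. For $\E\tilde\Xi_k(L(A+\epsilon\Delta)R)$ I would apply Lemma~\ref{lem:join} with $Q=\HS$; for the tame-conditioned expectation I would apply the same lemma with $Q=\{\Delta\text{ tame}\}$, which is permitted because $\PP(\Delta\text{ tame})=1-O(e^{-1/(2\epsilon)})\ge\frac12$ once $\epsilon$ is small enough (this is the one-matrix case of the last estimate in Lemma~\ref{lem:goodPert}). In both cases Lemma~\ref{lem:join} produces the lower bound $\tilde\Xi_k(L)+\tilde\Xi_k(R)-k|\log\epsilon|-C$ with $C$ depending only on $k$. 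Since $|\log\epsilon|\to\infty$ as $\epsilon\to0$, the additive $-C$ can be absorbed into the $\log\epsilon$ term by slightly enlarging its coefficient, so these two cases hold with, say, $C_{\ref{prop:splitting}}=k+1$.

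The remaining case, conditioning on $\Delta$ wild, is where the work lies. I would partition $\{\Delta\text{ wild}\}$ according to the lexicographically first oversized entry $(i,j)$, so that $\{\Delta\text{ wild}\}=\bigsqcup_{(i,j)}\mathsf{Wild}_{i,j}$ with $\mathsf{Wild}_{i,j}$ as in Lemma~\ref{lem:joinbad}. Conditioning on this partition gives
$$
\E\big(\tilde\Xi_k(L(A+\epsilon\Delta)R)\,\big|\,\Delta\text{ wild}\big)
=\sum_{(i,j)}\PP\big(\mathsf{Wild}_{i,j}\,\big|\,\Delta\text{ wild}\big)\,
\E\big(\tilde\Xi_k(L(A+\epsilon\Delta)R)\,\big|\,\mathsf{Wild}_{i,j}\big),
$$
and Lemma~\ref{lem:joinbad} bounds each inner conditional expectation below by $\tilde\Xi_k(L)+\tilde\Xi_k(R)-C|\log\epsilon|-C(i+j+1)$. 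As the weights are nonnegative and sum to $1$, it remains only to show that $\sum_{(i,j)}(i+j+1)\,\PP(\mathsf{Wild}_{i,j}\mid\Delta\text{ wild})$ is bounded by a constant uniform in $\epsilon$.

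For that I would use the crude bounds $\PP(\mathsf{Wild}_{i,j})\le 2\PP(N\ge\epsilon^{-1/2}2^{i+j})$ and $\PP(\Delta\text{ wild})\ge\PP(\mathsf{Wild}_{1,1})=2\PP(N\ge 4\epsilon^{-1/2})$, together with the standard Gaussian tail estimate \cite[Theorem 1.2.3]{Durrett}. The ratio $\PP(\mathsf{Wild}_{i,j})/\PP(\Delta\text{ wild})$ is $\le1$ when $i+j=2$ and is $\le C\exp\!\big(-(2^{2(i+j)-1}-8)/\epsilon\big)$ when $i+j\ge3$; since there are exactly $i+j-1$ pairs with a given value of $i+j$, the part of the sum with $i+j\ge3$ tends to $0$ as $\epsilon\to0$, so the whole sum is at most $3+o(1)\le4$ for small $\epsilon$. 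Substituting and absorbing the resulting additive constant into $\log\epsilon$ as in the first paragraph completes the proof.

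The hard part is this last estimate: showing that the conditional expectation of the depth $i+j+1$ of the first oversized entry, given that $\Delta$ is wild, stays bounded as $\epsilon\to0$. This is exactly where the geometric decay $(\tfrac23)^{i+j}$ of the tameness thresholds is indispensable --- it forces the probability that the first wild entry occurs in a deep mode to be doubly-exponentially small compared with $\PP(\Delta\text{ wild})$, so that deep wild entries, although they carry the largest error terms in Lemma~\ref{lem:joinbad}, make a negligible average contribution.
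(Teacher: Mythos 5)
Your proposal is correct and follows essentially the same route as the paper: Lemma~\ref{lem:join} (with $Q=\HS$ and $Q=\{\Delta\text{ tame}\}$) for the first two quantities, and for the wild case a partition over the lexicographically first oversized entry combined with Lemma~\ref{lem:joinbad} and the Gaussian tail bound, exactly as the paper does by reference to the end of the proof of Proposition~\ref{prop:badtriangleineq}. Your verification that $\sum_{(i,j)}(i+j+1)\PP(\mathsf{Wild}_{i,j}\mid\Delta\text{ wild})$ stays bounded as $\epsilon\to0$ correctly supplies the detail the paper leaves implicit.
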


\begin{proof}
The cases of $\E\tilde\Xi_k(L(A+\epsilon\Delta)R)$,
$\E\big(\tilde\Xi_k(L(A+\epsilon\Delta)R)
\big|\Delta\text{ is tame}\big)$ are handled by Lemma \ref{lem:join}.
The case of $\E\big(\tilde\Xi_k(L(A+\epsilon\Delta)R)\big|\Delta\text{ is wild}\big)$
is handled using Lemma \ref{lem:joinbad} by conditioning on the first entry of $\Delta$
that is large analogously to the end of the proof of
Proposition \ref{prop:badtriangleineq}.
\end{proof}

\section{Comparison of $\Xi_k$ and $\tilde\Xi_k$}\label{sec:Comparison}

\begin{lem}\label{lem:xicomp}
Let $C_k$ be the expected value of $\log|\det N_k|$ where $N_k$ is a
$k\times k$ matrix-valued random variable with independent standard normal entries.
Let $n\ge k$, let $A$ be an $n\times n$ matrix and let $N$ be a $k\times n$
matrix-valued random variable with independent standard normal entries.
Then $\E\Xi_k(NA)\ge \Xi_k(A)+C_k$.
\end{lem}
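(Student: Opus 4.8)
The plan is to reduce the singular-value product to a determinant, diagonalise $A$ via rotation invariance of the Gaussian, and then simply discard the modes beyond the $k$-th.

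First I would record that for any $k\times n$ matrix $M$ with $k\le n$ one has $s_1(M)\cdots s_k(M)=\sqrt{\det(MM^{*})}$, so $\Xi_k(NA)=\tfrac12\log\det\big(NAA^{*}N^{*}\big)$. If $A$ has rank $<k$ then $s_k(A)=0$, the right-hand side of the asserted inequality is $-\infty$, and there is nothing to prove; so I assume $s_k(A)>0$, in which case $\det(N_{[k]})\ne 0$ almost surely (where $N_{[k]}$ denotes the $k\times k$ matrix formed by the first $k$ columns of $N$) and all the expectations below are finite, using the standard fact that $\log|\det N_k|$ is integrable.

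Next, writing the singular value decomposition $A=U\Sigma V^{*}$ with $\Sigma=\diagmat(s_1(A),\dots,s_n(A))$ and $U,V$ orthogonal, we get $AA^{*}=U\Sigma^{2}U^{*}$ and hence $NAA^{*}N^{*}=\tilde N\Sigma^{2}\tilde N^{*}$ with $\tilde N=NU$. By rotation invariance of the $n$-dimensional standard Gaussian applied to each (independent) row of $N$, the matrix $\tilde N$ has the same distribution as $N$, i.e.\ a $k\times n$ array of i.i.d.\ standard normals. Thus it suffices to prove $\E\,\tfrac12\log\det\big(N\Sigma^{2}N^{*}\big)\ge\sum_{i=1}^{k}\log s_i(A)+C_k$.

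Finally I would drop the contribution of the modes $k+1,\dots,n$. Writing $N_j$ for the $j$-th column of $N$ and $\Sigma_k=\diagmat(s_1(A),\dots,s_k(A))$, we have $N\Sigma^{2}N^{*}=\sum_{j=1}^{n}s_j(A)^{2}N_jN_j^{*}\succeq\sum_{j=1}^{k}s_j(A)^{2}N_jN_j^{*}=N_{[k]}\Sigma_k^{2}N_{[k]}^{*}$, and since both sides are positive semidefinite, monotonicity of the determinant on the positive semidefinite cone gives $\det\big(N\Sigma^{2}N^{*}\big)\ge\det\big(N_{[k]}\Sigma_k^{2}N_{[k]}^{*}\big)=\det(N_{[k]})^{2}\prod_{i=1}^{k}s_i(A)^{2}$. (Equivalently, Cauchy--Binet writes $\det(N\Sigma^2N^*)$ as a sum of non-negative terms $\det(N_S)^2\prod_{j\in S}s_j(A)^2$ over $k$-subsets $S$, and one keeps only $S=\{1,\dots,k\}$.) Taking $\tfrac12\log$ and then the expectation, and using that $N_{[k]}$ is a $k\times k$ matrix of i.i.d.\ standard normals so that $\E\log|\det N_{[k]}|=C_k$, gives exactly $\E\Xi_k(NA)\ge\Xi_k(A)+C_k$. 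The argument is essentially routine; the only points needing any care are the reduction-to-determinant identity and the observation that $\det(N\Sigma^2N^*)$ dominates the single ``top'' term, both of which are elementary.
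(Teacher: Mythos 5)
Your proposal is correct and follows essentially the same route as the paper: diagonalise $A$ by an SVD, use rotation invariance of the Gaussian rows to absorb the orthogonal factor, discard the modes beyond the $k$-th, and reduce to the $k\times k$ Gaussian determinant. The only cosmetic difference is how the truncation step is justified --- you use monotonicity of the determinant on the positive semidefinite cone (equivalently Cauchy--Binet), while the paper notes that right-composition with the projection $\Pi_k$ can only decrease the singular values, so $\Xi_k(ND)\ge\Xi_k(ND\Pi_k)$; these are interchangeable.
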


\begin{proof}
Write $A=UDV$ where $U$ and $V$ are orthogonal and $D$ is
diagonal with decreasing entries.
Then by an argument like that in Lemma \ref{lem:PS} (computing covariances between elements)
$NU$ has the same distribution as $N$, so that we have
$\E\Xi_k(NA)=\E\Xi_k(NUDV)=\E\Xi_k(ND)\ge \E\Xi_k(ND\Pi_k)$.
Notice that since $D$ is diagonal,
$ND\Pi_k$ has the form $\begin{pmatrix}N_kD_k|0\end{pmatrix}$,
where $N_k$ is the  left $k\times k$ submatrix of $N$ and $D_k$ is the top left
$k\times k$ submatrix of $D$. Hence $\E\Xi_k(ND\Pi_k)=\E\Xi_k(N_kD_k)=C_k+\Xi_k(D_k)
=C_k+\Xi_k(A)$ as required.
\end{proof}

\begin{lem}\label{lem:trunc}
Let $A$, $B$ and $C$ be Hilbert-Schmidt matrices, and let $A_n=\Pi_nA\Pi_n$.
Then $\Xi_k(BA_nC)\to \Xi_k(BAC)$ as $n\to\infty$.
\end{lem}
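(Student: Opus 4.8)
The plan is to reduce everything to the continuity of singular values in operator norm (Lemma~\ref{lem:sv}\eqref{it:sjcont}), after checking that truncation converges in Hilbert--Schmidt norm. First I would observe that $A-A_n$ is the operator whose $(i,j)$ matrix entry equals $\langle Ae_i,e_j\rangle$ when $i>n$ or $j>n$ and is $0$ otherwise; hence $\|A-A_n\|_\HS^2$ is precisely the tail $\sum_{i>n\text{ or }j>n}\langle Ae_i,e_j\rangle^2$ of the convergent series $\|A\|_\HS^2=\sum_{i,j}\langle Ae_i,e_j\rangle^2$, so $\|A-A_n\|_\HS\to 0$. Since $B,C\in\HS$ are in particular bounded and $\|\cdot\|_\op\le\|\cdot\|_\HS$, it follows that
$$
\|BA_nC-BAC\|_\op=\|B(A_n-A)C\|_\op\le\|B\|_\op\,\|A_n-A\|_\HS\,\|C\|_\op\longrightarrow 0.
$$

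Next, all the operators $BA_nC$ and $BAC$ are compact (finite rank, resp.\ the product of a bounded operator with a compact one), so Lemma~\ref{lem:sv}\eqref{it:sjcont} applies and gives $|s_j(BA_nC)-s_j(BAC)|\le\|BA_nC-BAC\|_\op\to 0$ for every $j$; in particular $s_j(BA_nC)\to s_j(BAC)$ for $j=1,\dots,k$. If $s_k(BAC)>0$, then $s_1(BAC),\dots,s_k(BAC)$ are all strictly positive, the corresponding perturbed singular values are eventually positive, and $\Xi_k(BA_nC)=\sum_{j=1}^k\log s_j(BA_nC)\to\sum_{j=1}^k\log s_j(BAC)=\Xi_k(BAC)$ by continuity of $\log$ on $(0,\infty)$.

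The only point needing a separate word is the degenerate case $s_k(BAC)=0$, where $\Xi_k(BAC)=-\infty$: here the finite product $\prod_{j<k}s_j(BA_nC)$ converges to the finite number $\prod_{j<k}s_j(BAC)$ while $s_k(BA_nC)\to 0$, so $\prod_{j\le k}s_j(BA_nC)\to 0$ and hence $\Xi_k(BA_nC)\to-\infty=\Xi_k(BAC)$. This exhausts all cases. I do not expect a genuine obstacle in this lemma; the one place demanding a little care is exactly this vanishing-volume case, since there $\Xi_k$ is $\log$ of a product that tends to $0$ and one must make sure the remaining factors stay bounded, which is immediate from the coordinatewise convergence of singular values established above.
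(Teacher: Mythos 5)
Your proposal is correct and follows essentially the same route as the paper: show $\|A-A_n\|\to 0$ (the paper writes $R_n=A-A_n$ and notes $\|R_n\|\to0$), deduce $|s_j(BA_nC)-s_j(BAC)|\le\|B\|\,\|R_n\|\,\|C\|\to 0$ from Lemma~\ref{lem:sv}\eqref{it:sjcont}, and conclude. Your explicit treatment of the tail of the Hilbert--Schmidt series and of the degenerate case $s_k(BAC)=0$ fills in details the paper leaves implicit, but adds nothing different in substance.
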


\begin{proof}
Let $R_n=A-A_n$, so that $\|R_n\|\to 0$. We have $|s_i(BA_nC)-s_i(BAC)|\le
\|B\|\cdot\|R_n\|\cdot\|C\|$ for each
$i$ so that
$s_i(BA_nC)\to s_i(BAC)$ for each $i$. The conclusion follows.
\end{proof}

\begin{prop}\label{prop:XikvsXiktilde}
Let $k>0$. Then there exists a constant $C_{\ref{prop:XikvsXiktilde}}$
such that for an arbitrary Hilbert-Schmidt operator $A$ on $H$,
$$
\tilde\Xi_k(A)\ge \Xi_k(\diagpar3A\diagpar3)-C_{\ref{prop:XikvsXiktilde}}.
$$
\end{prop}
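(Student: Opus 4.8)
The plan is to unwind the definition of $\tilde\Xi_k$, diagonalise $\diagpar3 A\diagpar3$, and then reduce everything to a short computation with $k\times k$ Gaussian matrices. Set $B=\diagpar3 A\diagpar3$; this is Hilbert--Schmidt since $3^{-(i+j)}\le 1$, and $\Xi_k(B)=-\infty$ exactly when $A$ has rank less than $k$, a case in which the asserted inequality is trivial, so I assume $\operatorname{rank}(A)\ge k$. Writing $\Delta=\diagpar3 N\diagpar3$ and $\Delta'=\diagpar3 N'\diagpar3$ with $N,N'$ independent countably infinite i.i.d.\ standard-normal matrices, and using $\Pi_k\diagpar3=\diagpar3\Pi_k$, one has $\Pi_k\Delta A\Delta'\Pi_k=\diagpar3(\Pi_k NBN'\Pi_k)\diagpar3$, and $\Pi_k NBN'\Pi_k$ is the random $k\times k$ matrix $M:=N_{[k]}BN'_{[k]}$, where $N_{[k]}$ consists of the first $k$ rows of $N$ and $N'_{[k]}$ of the first $k$ columns of $N'$ (the products converge almost surely because $B$ is Hilbert--Schmidt). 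Since $\diagpar3$ contributes a deterministic factor to the determinant and $\Xi_k$ is $\log|\det(\cdot)|$ on a $k\times k$ block, this yields
$$
\tilde\Xi_k(A)=-k(k+1)(\log 3)+\E\log|\det M|,
$$
so it remains to prove $\E\log|\det M|\ge\Xi_k(B)+2C_k$ for some constant $C_k=C_k(k)$.

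Next I would pass to a singular value decomposition $B=UDV^{*}$ with $U,V$ unitary and $D=\diagmat(s_1(B),s_2(B),\dots)$, so that $M=(N_{[k]}U)\,D\,(V^{*}N'_{[k]})$. A covariance computation as in the proofs of Lemmas~\ref{lem:xicomp} and~\ref{lem:PS} shows that $N_{[k]}U$ has the same distribution as $N_{[k]}$, that $V^{*}N'_{[k]}$ has the same distribution as $N'_{[k]}$, and that these two are independent; hence $\E\log|\det M|=\E\log|\det(\tilde N D\tilde N')|$ with $\tilde N\stackrel{d}{=}N_{[k]}$ and $\tilde N'\stackrel{d}{=}N'_{[k]}$ independent. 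Now condition on $\tilde N'$: with $C:=D\tilde N'$ fixed (an $\infty\times k$ matrix which is of rank $k$ almost surely, since $s_k(B)>0$), the rows of $\tilde N C$ are i.i.d.\ centred Gaussian with covariance $C^{*}C$, so conditionally $\tilde N C$ is distributed as $G(C^{*}C)^{1/2}$ with $G$ a $k\times k$ i.i.d.\ standard-normal matrix. Therefore
$$
\E\bigl(\log|\det(\tilde N D\tilde N')|\bigm|\tilde N'\bigr)=C_k+\tfrac12\log\det(C^{*}C)=C_k+\Xi_k(D\tilde N'),
$$
where $C_k:=\E\log|\det G|$ is finite. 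It remains to show $\E\,\Xi_k(D\tilde N')\ge\Xi_k(D)+C_k$, and to note $\Xi_k(D)=\Xi_k(B)$.

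For this last step I would simply discard all rows of $D\tilde N'$ beyond the first $k$: appending rows to a tall matrix only increases its singular values, since $(M')^{*}M'=M^{*}M+(\text{extra rows})^{*}(\text{extra rows})\succeq M^{*}M$ and hence $s_j(M')\ge s_j(M)$ by Weyl's inequality. The first $k$ rows of $D\tilde N'$ form the $k\times k$ matrix $\diagmat(s_1(B),\dots,s_k(B))\,(\tilde N')^{(k)}$, where $(\tilde N')^{(k)}$ is the top $k\times k$ block of $\tilde N'$, so $\Xi_k(D\tilde N')\ge\Xi_k(D)+\log|\det(\tilde N')^{(k)}|$; taking expectations gives the claim. (Alternatively one can deduce the bound on $\E\,\Xi_k(D\tilde N')$ from Lemma~\ref{lem:xicomp} after truncating $D$ to a finite block, using continuity of singular values in Hilbert--Schmidt norm, Lemmas~\ref{lem:sv}(c) and~\ref{lem:trunc}.) Assembling the displays yields the proposition with $C_{\ref{prop:XikvsXiktilde}}=k(k+1)(\log 3)-2C_k$ (enlarged to be positive if one insists on a positive constant).

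The routine parts are genuinely routine; the one point requiring care is that the infinite Gaussian ``matrices'' $N_{[k]}$, $N_{[k]}U$, $\tilde N$, $\tilde N'$ and $C=D\tilde N'$ are not bounded operators on $H$ (their rows or columns need not be square-summable), so one must verify that each product above is well defined almost surely — which it is, by absolute convergence of the relevant series, precisely because $B$ and $D$ are Hilbert--Schmidt — and that the distributional facts used (the orthogonal invariance $N_{[k]}U\stackrel{d}{=}N_{[k]}$, the conditional law of $\tilde N C$, and the ``append rows'' inequality for $D\tilde N'$) all reduce to finite-dimensional statements. A secondary point is the integrability needed to make the conditional-expectation identities genuine: one uses that $\log|\det G|$ and $\log|\det(\tilde N')^{(k)}|$ have finite expectation, so that $\E\,\Xi_k(D\tilde N')$ is well defined in $(-\infty,\infty]$ and $\tilde\Xi_k(A)$ is not of the form $\infty-\infty$.
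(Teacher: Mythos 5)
Your argument is correct, and its computational core --- diagonalising $B=\diagpar3A\diagpar3$ via the singular value decomposition, using orthogonal invariance of the Gaussian ensemble, and reducing to $\E\log|\det N_k|$ --- is exactly the engine the paper uses (its Lemma~\ref{lem:xicomp}). Where you differ is in the treatment of infinite-dimensionality. The paper first truncates, setting $A_n=\Pi_nA\Pi_n$, so that every Gaussian array in sight is a genuine finite matrix and Lemma~\ref{lem:xicomp} applies verbatim; it then recovers the statement for $A$ by showing $\limsup_n\tilde\Xi_k(A_n)\le\tilde\Xi_k(A)$ via domination and the Reverse Fatou Lemma, together with $\Xi_k(\diagpar3A_n\diagpar3)\to\Xi_k(\diagpar3A\diagpar3)$ from Lemma~\ref{lem:trunc}. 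You instead manipulate the infinite arrays $N_{[k]}$, $N_{[k]}U$, $D\tilde N'$ directly; as you yourself note, these are not bounded operators on $H$, so the identity $\Pi_k\Delta A\Delta'\Pi_k=\diagpar3(N_{[k]}BN'_{[k]})\diagpar3$, the re-bracketing $N_{[k]}(UDV^{*})N'_{[k]}=(N_{[k]}U)D(V^{*}N'_{[k]})$, and the invariance $N_{[k]}U\stackrel{d}{=}N_{[k]}$ (really a statement about the isonormal process $h\mapsto\sum_lN_{il}h_l$ on $\ell^2$ applied to the orthonormal columns of $U$) each require the almost-sure absolute convergence you invoke, which does hold because $B$ and $D$ are Hilbert--Schmidt. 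These verifications are doable but are precisely what the paper's truncation is designed to avoid; your parenthetical fallback (truncate and use Lemmas~\ref{lem:sv}\eqref{it:sjcont} and~\ref{lem:trunc}) is in effect the paper's route. What your version buys is an exact identity $\tilde\Xi_k(A)=\E\log|\det M|-k(k+1)\log 3$ rather than a one-sided limiting inequality, plus a slightly sharper use of conditioning (an equality on the left Gaussian factor, an inequality only on the right, versus the paper's two applications of Lemma~\ref{lem:xicomp}); what the paper's version buys is that no infinite Gaussian array ever needs to be given a meaning. Either way the constant is explicit and harmless.
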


\begin{proof}
We have $\tilde\Xi_k(A)=\E_{\Delta,\Delta'}\Xi_k(\Pi_k\Delta A\Delta'\Pi_k)$ where
$\Delta$ and $\Delta'$ are independent copies of the perturbation operator.
Since $\Xi_k(\Pi_k\Delta A_n\Delta'\Pi_k)\le
k\log\|\Pi_k\Delta A_n\Delta'\Pi_k\|_\op
\le k\log(\|\Delta\|_\op\cdot\|A_n\|_\op\cdot\|\Delta'\|_\op)$;
$\|A_n\|_\op\le \|A\|_\HS$ and $\E\log\|\Delta\|_\op<
\E\|\Delta\|_\op\le \E\|\Delta\|_\HS<\infty$, we see that the family of functions,
$(\Delta,\Delta')\mapsto \Xi_k(\Pi_k\Delta A_n\Delta'\Pi_k)$ is dominated by an integrable
function. Hence, by the Reverse Fatou Lemma and Lemma \ref{lem:trunc}, we have
$$
\limsup_{n\to\infty}
\tilde\Xi_k(A_n)=\limsup_{n\to\infty}
\E\Xi_k(\Pi_k\Delta A_n\Delta'\Pi_k)\le \tilde\Xi_k(A).
$$
However, we have
\begin{align*}
\E_{\Delta,\Delta'}\Xi_k(\Pi_k\Delta A_n\Delta'\Pi_k)
=
\E_{\Delta,\Delta'}\Xi_k(\Delta_{k\times n}A_n\Delta'_{n\times k}),
\end{align*}
where $\Delta_{k\times n}$ denotes the random Hilbert Schmidt operator $\Delta$
with all entries outside the top left $k\times n$ corner replaced by 0's (and
$\Delta'_{n\times k}$ similarly).
Hence
\begin{align*}
\tilde\Xi_k(A_n)&=\E_{N,N'}\Xi_k\big((\diagpar 3)_{k\times k}N_{k\times n}
(\diagpar 3)_{n\times n}
A_n(\diagpar 3)_{n\times n}N'_{n\times k}(\diagpar 3)_{k\times k}\big)\\
&= \E_{N,N'}\Xi_k\big(N_{k\times n}(\diagpar 3)_{n\times n}
A_n(\diagpar 3)_{n\times n}N'_{n\times k}\big)-k(k-1)\log 3
\end{align*}

Applying Lemma \ref{lem:xicomp} twice, we deduce $\tilde\Xi_k(A_n)
\ge \Xi_k(\diagpar 3 A_n\diagpar 3)+C$, so that on taking the limit,
we deduce $\tilde\Xi_k(A)\ge \Xi_k(\diagpar 3A\diagpar 3)+C$ as required.
\end{proof}

\begin{cor}\label{cor:twoglue}
There is a $C_{\ref{cor:twoglue}}$
with the following property. Let $L$, $R$, $A$ and $A'$ be
Hilbert-Schmidt operators and  $\Delta$ and
$\Delta'$ be independent copies of the standard perturbation.
Then we have
$$
\E\tilde\Xi_k(L(A'+\epsilon\Delta')(A+\epsilon\Delta)R)
\ge \tilde\Xi_k(L)+\tilde\Xi_k(R)+C_{\ref{cor:twoglue}}\log\epsilon.
$$
The same inequality holds if either or both of $\Delta$ and $\Delta'$ are
constrained to be either tame or wild (or one of each).
\end{cor}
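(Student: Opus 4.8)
The plan is to derive the bound by two successive applications of Proposition~\ref{prop:splitting}, one for each of the two perturbation sites, using that $\Delta$ and $\Delta'$ are independent. First I would condition on the value of $\Delta'$, so that $L(A'+\epsilon\Delta')$ is a fixed Hilbert--Schmidt operator. Proposition~\ref{prop:splitting} --- applied with $L(A'+\epsilon\Delta')$ in the role of $L$, with $A$ and $R$ unchanged, and with whichever conditioning (none, $\Delta$ tame, or $\Delta$ wild) we have imposed --- then gives
$$
\E_{\Delta}\,\tilde\Xi_k\big(L(A'+\epsilon\Delta')(A+\epsilon\Delta)R\big) \ge \tilde\Xi_k\big(L(A'+\epsilon\Delta')\big)+\tilde\Xi_k(R)+C_{\ref{prop:splitting}}\log\epsilon ,
$$
where the expectation (and the conditioning, if any) is over $\Delta$ alone, with $\Delta'$ held fixed. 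Since $\Delta$ and $\Delta'$ are independent, taking the expectation over $\Delta'$ --- subject to its own, independently chosen, constraint --- converts the left-hand side into exactly the (conditional) expectation of $\tilde\Xi_k(L(A'+\epsilon\Delta')(A+\epsilon\Delta)R)$ appearing in the corollary, and leaves $\E_{\Delta'}\,\tilde\Xi_k\big(L(A'+\epsilon\Delta')\big)$ to be bounded below.

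For this last quantity I would apply Proposition~\ref{prop:splitting} again, writing $L(A'+\epsilon\Delta')=L\,(A'+\epsilon\Delta')\,I$, that is, with the identity operator $I$ in place of $R$. This is permissible: the proof of Proposition~\ref{prop:splitting}, which runs through Lemmas~\ref{lem:join} and~\ref{lem:joinbad} and ultimately Lemma~\ref{lem:centred} and Corollary~\ref{cor:PS}, uses nothing about the flanking operators beyond their being bounded and of rank at least $k$; taking $R=I$ merely replaces the term $\tilde\Xi_k(R)$ by the fixed finite constant $\tilde\Xi_k(I)=\E\,\Xi_k(\Pi_k\Delta\Delta'\Pi_k)$. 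This yields
$$
\E_{\Delta'}\,\tilde\Xi_k\big(L(A'+\epsilon\Delta')\big) \ge \tilde\Xi_k(L)+\tilde\Xi_k(I)+C_{\ref{prop:splitting}}\log\epsilon .
$$
Combining the two displays, and choosing $C_{\ref{cor:twoglue}}$ large enough to absorb the constant $\tilde\Xi_k(I)$ into $C_{\ref{cor:twoglue}}\log\epsilon$, gives the asserted inequality. The same computation goes through verbatim for every admissible combination of constraints on $\Delta$ and on $\Delta'$ --- both tame, both wild, one of each, or one or both unconstrained --- because Proposition~\ref{prop:splitting} supplies the required bound in each of the ``none/tame/wild'' cases at both steps.

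Apart from this, the argument is purely formal; there is no new estimate to prove. The only two points requiring attention are the legitimacy of the second application with $R=I$ (which comes down to inspecting the proof of Proposition~\ref{prop:splitting} to confirm that no property of $R$ beyond boundedness and rank $\ge k$ is used) and the bookkeeping of the two independent conditionings --- it is precisely here that independence of $\Delta$ and $\Delta'$ is essential, ensuring that conditioning $\Delta$ to be tame or wild does not disturb the subsequent integration over $\Delta'$. I would also note in passing the degenerate case in which one of $L,R,A,A'$ has rank below $k$, where both sides are $-\infty$ and there is nothing to prove. I expect the $R=I$ justification to be the main (mild) obstacle.
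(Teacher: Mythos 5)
Your argument is correct and is essentially the paper's own proof: two successive applications of Proposition~\ref{prop:splitting}, with the second one using the factorisation $L(A'+\epsilon\Delta')=L(A'+\epsilon\Delta')I$ and absorbing the finite constant $\tilde\Xi_k(I)$ (which the paper bounds below via Proposition~\ref{prop:XikvsXiktilde}). Your extra care about the legitimacy of taking $R=I$ (not Hilbert--Schmidt, but the proof only needs boundedness and rank at least $k$) is a point the paper glosses over.
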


\begin{proof}
Let $L'=L(A'+\epsilon\Delta')=L(A'+\epsilon\Delta')I$.
By Proposition \ref{prop:splitting},
$\E_{\Delta'}\tilde\Xi_k(L')\ge \tilde\Xi_k(L)+\tilde\Xi_k(I)+C_{\ref{prop:splitting}}
\log\epsilon$, with this inequality
still satisfied if $\Delta'$ is constrained to be tame or wild. By Proposition
\ref{prop:XikvsXiktilde}, $\tilde\Xi_k(I)$ is a finite constant. Finally,
$\E_\Delta\tilde\Xi_k(L'(A+\epsilon\Delta)R)\ge \tilde\Xi_k(L')+
\tilde\Xi_k(R)+C_{\ref{prop:splitting}}\log\epsilon$.
Combining the inequalities, the result is proved.
\end{proof}

\begin{lem}\label{lem:logconvex}
Let $f(t)=\sum_{i=1}^n a_ie^{b_it}$ where $a_i>0$ for each $i$.
Then $f(t)$ is log-convex.
\end{lem}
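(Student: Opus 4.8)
The plan is to reduce the claim to the pointwise inequality $f(t)f''(t)\ge (f'(t))^2$ and then invoke the Cauchy--Schwarz inequality. First I would observe that since every $a_i>0$ we have $f(t)>0$ for all $t$, so $g(t):=\log f(t)$ is well-defined and smooth, and log-convexity of $f$ is exactly convexity of $g$. Differentiating term by term, $f'(t)=\sum_i a_ib_ie^{b_it}$ and $f''(t)=\sum_i a_ib_i^2e^{b_it}$, and a direct computation gives $g''(t)=\bigl(f(t)f''(t)-(f'(t))^2\bigr)/f(t)^2$. Hence it suffices to show $f(t)f''(t)\ge (f'(t))^2$ for every $t$.

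For the latter, fix $t$ and apply the Cauchy--Schwarz inequality in $\R^n$ to the vectors with $i$-th coordinates $\sqrt{a_ie^{b_it}}$ and $b_i\sqrt{a_ie^{b_it}}$ (this is legitimate because each $a_ie^{b_it}>0$, so the square roots make sense). This yields
\[
\left(\sum_{i=1}^n a_ib_ie^{b_it}\right)^{2}
\le
\left(\sum_{i=1}^n a_ie^{b_it}\right)\left(\sum_{i=1}^n a_ib_i^2e^{b_it}\right),
\]
which is precisely $(f'(t))^2\le f(t)f''(t)$. Therefore $g''\ge 0$ on all of $\R$, so $g=\log f$ is convex and $f$ is log-convex, as claimed.

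I do not expect any serious obstacle here; the only points requiring a little care are the implicit positivity of $f$ (which makes the statement "log-convex" meaningful and is immediate from $a_i>0$), and choosing the weights in the Cauchy--Schwarz step so that the cross term reproduces $f'(t)$ exactly rather than some other combination. One could alternatively prove log-convexity straight from the definition, using H\"older's inequality to check $f(\lambda s+(1-\lambda)t)\le f(s)^{\lambda}f(t)^{1-\lambda}$ for $\lambda\in[0,1]$, but the second-derivative argument is shorter and entirely self-contained, so that is the route I would take.
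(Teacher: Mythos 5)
Your proof is correct and follows essentially the same route as the paper: both reduce log-convexity to the pointwise inequality $f(t)f''(t)\ge (f'(t))^2$ via $(\log f)''=(ff''-(f')^2)/f^2$. The only difference is cosmetic — you invoke Cauchy--Schwarz for the weights $a_ie^{b_it}$, whereas the paper expands $ff''-(f')^2$ directly and groups terms into the nonnegative combination $\sum_{i<j}a_ia_je^{(b_i+b_j)t}(b_i-b_j)^2$, which is just Lagrange's identity for the same Cauchy--Schwarz inequality.
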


\begin{proof}
We have $(\log f)'=f'/f$, so that $(\log f)''=(ff''-(f')^2)/f^2$.
Now
\begin{align*}
ff''-(f')^2&=
\sum_{i\ne j}a_ia_je^{(b_i+b_j)t}(b_j^2-b_ib_j)
+\sum_i a_i^2e^{2b_it}(b_i^2-b_i^2)\\
&=\sum_{i<j}a_ia_je^{(b_i+b_j)t}(b_i^2+b_j^2-2b_ib_j)\\
&\ge 0.
\end{align*}
\end{proof}

\begin{lem}\label{lem:convex1}
Let $V$ be a $k$-dimensional subspace of $H$ and let $\Pi_V$ be
the orthogonal projection onto $V$.
Then $f(s):=\Xi_k(\diagpar{e^s}\circ\Pi_V)$ is a convex function.
\end{lem}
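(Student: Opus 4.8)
The plan is to reduce the claim to Lemma~\ref{lem:logconvex} by writing $\exp(2f(s))$ as a series of real exponentials with nonnegative coefficients. Fix an orthonormal basis $w_1,\dots,w_k$ of $V$, and write $(w_i)_l=\langle w_i,e_l\rangle$ for the coordinates. The operator $\diagpar{e^s}\circ\Pi_V$ has rank at most $k$ (indeed exactly $k$, wherever $\diagpar{e^s}$ is defined on $V$), so $s_1(\diagpar{e^s}\Pi_V)\cdots s_k(\diagpar{e^s}\Pi_V)$ is simply the $k$-dimensional volume of the parallelepiped spanned by $\diagpar{e^s}w_1,\dots,\diagpar{e^s}w_k$, whence
$$
\exp\big(2f(s)\big)=\det\Big(\big\langle\diagpar{e^s}w_i,\diagpar{e^s}w_j\big\rangle\Big)_{i,j=1}^{k},
$$
the Gram determinant of the images of the $w_i$.

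Next I would expand this determinant by Cauchy--Binet, equivalently by computing $\|\diagpar{e^s}w_1\wedge\cdots\wedge\diagpar{e^s}w_k\|^2$ in $\Lambda^kH$. Since $\diagpar{e^s}w_i=\sum_{l}e^{-ls}(w_i)_l\,e_l$, the $(l_1,\dots,l_k)$ minor of the $k\times\infty$ matrix with rows $\diagpar{e^s}w_i$ equals $e^{-s(l_1+\cdots+l_k)}\det\big((w_i)_{l_j}\big)_{i,j=1}^k$, and therefore
$$
\exp\big(2f(s)\big)=\sum_{l_1<\cdots<l_k}\Big|\det\big((w_i)_{l_j}\big)_{i,j=1}^{k}\Big|^{2}\,e^{-2s(l_1+\cdots+l_k)}.
$$
Every coefficient $|\det(\cdots)|^2$ is nonnegative and every exponent $-2(l_1+\cdots+l_k)$ is real, so this is a (possibly infinite) series of the form $\sum_\alpha a_\alpha e^{b_\alpha s}$ with $a_\alpha\ge 0$.

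To finish, I would apply Lemma~\ref{lem:logconvex} to each finite partial sum of this series (discarding the terms with $a_\alpha=0$), concluding that each partial sum is log-convex; since $\exp(2f(s))$ is the increasing pointwise limit of these partial sums, and an increasing pointwise limit of convex functions is convex (apply this to the logarithms of the partial sums), $s\mapsto\exp(2f(s))$ is log-convex, i.e.\ $2f$ is convex, i.e.\ $f$ is convex. Where the series diverges one simply has $f(s)=+\infty$, and the conclusion persists in the extended-real-valued sense.

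The main thing to spot is the Cauchy--Binet / exterior-power expansion that turns the Gram determinant into a sum of exponentials; once that is in hand the result is essentially immediate from Lemma~\ref{lem:logconvex}. The only mild technicalities are the rank-$\le k$ identification of $\exp(2f)$ with the Gram determinant and the passage from finite sums to the full series, handled by the standard fact that monotone limits of convex functions are convex. I do not expect any genuine obstacle here.
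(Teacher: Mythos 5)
Your proof is correct, but it takes a genuinely different route from the paper's. The paper does not expand $e^{2f}$ as a sum of exponentials; instead it first proves the ``chord through the origin'' inequality $f(s)\le \frac st f(t)$ for $0<s<t$ by choosing (via Lemma~\ref{lem:simdiag}) an orthonormal basis $v_1,\dots,v_k$ of $V$ with $\diagpar{e^t}v_i$ mutually orthogonal, bounding $f(s)\le\sum_i\log\|\diagpar{e^s}v_i\|$, and applying Lemma~\ref{lem:logconvex} only to the scalar functions $\|\diagpar{e^s}v_i\|^2=\sum_j e^{-2sj}(v_i)_j^2$ (together with $\log\|\diagpar{e^0}v_i\|=0$); it then upgrades this to full convexity using the semigroup identity $\diagpar{e^b}=\diagpar{e^{b-a}}\diagpar{e^a}$ and the multiplicativity $\Xi_k(\diagpar{e^{b-a}}\diagpar{e^a}\Pi_V)=\Xi_k(\diagpar{e^{b-a}}\Pi_W)+\Xi_k(\diagpar{e^a}\Pi_V)$ with $W=\diagpar{e^a}V$. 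Your Cauchy--Binet expansion
$\exp(2f(s))=\sum_{l_1<\cdots<l_k}|\det((w_i)_{l_j})|^2e^{-2s(l_1+\cdots+l_k)}$
applies Lemma~\ref{lem:logconvex} once, directly to $e^{2f}$, and so avoids both the auxiliary inequality and the translation step; the price is the exterior-power/Gram-determinant identification and the passage from finite partial sums to the series, both of which you handle correctly (monotone limits of convex functions are convex, and convergence is automatic for $s\ge 0$ since the $s=0$ sum is the Gram determinant of an orthonormal set). The paper's argument stays entirely with scalar norms and the structural properties of $\Xi_k$ already used elsewhere, which is why it is phrased that way, but your version is self-contained and arguably more transparent.
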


\begin{proof}
We first prove that for $0<s<t$, $f(s)\le \frac st f(t)$.
To see this, let $v_1,\ldots,v_k$ be an orthogonal basis of $V$
such that $\diagpar{e^t}v_1,\ldots,\diagpar{e^t}v_k$ are orthogonal.
Then $f(s)\le\sum_{i=1}^k\log\|\diagpar{e^s}v_i\|$.
By Lemma \ref{lem:logconvex}, $s\mapsto \log\|\diagpar{e^s}v_i\|
=\frac 12\log(\sum_j e^{-2sj}{(v_i)_j}^2)$
is convex, so that $\log\|\diagpar{e^s}v_i\|\le \frac st
\log\|\diagpar{e^t}v_i\|$. Hence $f(s)\le \frac st f(t)$ as claimed.

Now if $0<a<b<c$, let $W=\diagpar{e^a}V$, let $s=b-a$ and $t=c-a$.
Let $\alpha=\Xi_k(\diagpar{e^a}\Pi_V)$.
Now we have $f(a)=\alpha$, $f(b)=\Xi_k(\diagpar{e^b}\Pi_V)
=\Xi_k(\diagpar{e^{b-a}}\diagpar{e^a}\Pi_V)
=\Xi_k(\diagpar{e^{b-a}}\Pi_W)+\Xi_k(\diagpar{e^a}\Pi_V)=
\alpha+\Xi_k(\diagpar{e^s}\Pi_W)$.
Similarly $f(c)=\alpha+\Xi_k(\diagpar{e^t}\Pi_W)$ and the result follows
from the above.
\end{proof}

\begin{lem}\label{lem:convex2}
Let $A$ be a Hilbert-Schmidt operator on $H$.
Then $g(s)\colon=\Xi_k(\diagpar{e^s}A)$ is a convex function.
Similarly $h(s)\colon=\Xi_k(A\diagpar{e^s})$ is convex.
\end{lem}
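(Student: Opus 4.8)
The plan is to realise $g$ as a pointwise supremum of convex functions and then invoke Lemma~\ref{lem:convex1}. First I would record the elementary fact that for every compact operator $M$ on $H$ one has $\Xi_k(M)=\sup_{V\in\mathcal G_k(H)}\Xi_k(M\Pi_V)$: the nonzero singular values of $M\Pi_V$ are among those of $M$ (by the min--max formula of Lemma~\ref{lem:sv}\eqref{it:maxmin}), so $\Xi_k(M\Pi_V)\le\Xi_k(M)$, and equality holds when $V$ is the span of the top $k$ right singular vectors of $M$. Applying this with $M=\diagpar{e^s}A$ yields
\[
g(s)=\sup_{V\in\mathcal G_k(H)}\Xi_k\!\bigl(\diagpar{e^s}\,A\,\Pi_V\bigr).
\]

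Next I would fix $V\in\mathcal G_k(H)$ on which $A$ is injective, choose an orthonormal basis $v_1,\dots,v_k$ of $V$, and set $W=A(V)=\lin\{Av_1,\dots,Av_k\}\in\mathcal G_k(H)$. Since $\exp\Xi_k(\diagpar{e^s}A\Pi_V)$ is the $k$-volume of the parallelepiped spanned by $\diagpar{e^s}Av_1,\dots,\diagpar{e^s}Av_k$, expanding each $Av_i$ in an orthonormal basis of $W$ gives $\exp\Xi_k(\diagpar{e^s}A\Pi_V)=c_V\exp\Xi_k(\diagpar{e^s}\circ\Pi_W)$, where $c_V=\vol_k(Av_1,\dots,Av_k)>0$ does not depend on $s$; equivalently $\Xi_k(\diagpar{e^s}A\Pi_V)=\Xi_k(\diagpar{e^s}\Pi_W)+\log c_V$. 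By Lemma~\ref{lem:convex1} the map $s\mapsto\Xi_k(\diagpar{e^s}\Pi_W)$ is convex, hence so is $s\mapsto\Xi_k(\diagpar{e^s}A\Pi_V)$. The subspaces $V$ on which $A$ is not injective give $\Xi_k(\diagpar{e^s}A\Pi_V)=-\infty$ and may be discarded --- unless $A$ has rank below $k$, in which case $g\equiv-\infty$ is trivially convex. Otherwise $g$ is a supremum of convex functions; it is finite (bounded above because $\|\diagpar{e^s}A\|_{\op}\le e^{-s}\|A\|_{\op}$, and $>-\infty$ because $\diagpar{e^s}$ is injective so $s_k(\diagpar{e^s}A)>0$), and therefore convex. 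For $h$ I would simply observe that $\Xi_k$ is unchanged by passing to the adjoint and that $\diagpar{e^s}$ is self-adjoint, so that $h(s)=\Xi_k(A\diagpar{e^s})=\Xi_k\bigl((A\diagpar{e^s})^{*}\bigr)=\Xi_k(\diagpar{e^s}A^{*})$; since $A^{*}\in\HS$, convexity of $h$ is the case already proved.

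The one delicate ingredient is the identity $\Xi_k(\diagpar{e^s}A\Pi_V)=\Xi_k(\diagpar{e^s}\Pi_W)+\log c_V$, i.e.\ that top-$k$ volume growth is multiplicative across the rank-$k$ map $A\Pi_V$. The cleanest route is through the $k$-th exterior power: $\Lambda^k(A\Pi_V)$ is a rank-one operator whose range is the line spanned by the unit decomposable $k$-vector $\eta_W$ of $W$, so $\|\Lambda^k(\diagpar{e^s}A\Pi_V)\|=\|\Lambda^k(\diagpar{e^s})\eta_W\|\cdot\|\Lambda^k(A\Pi_V)\|$ with $\|\Lambda^k(\diagpar{e^s})\eta_W\|=\exp\Xi_k(\diagpar{e^s}\Pi_W)$ and $\|\Lambda^k(A\Pi_V)\|=c_V$. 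Everything else is bookkeeping, and one should keep in mind throughout that $g$ and $h$ are defined on $(0,\infty)$ since $\diagpar\lambda$ requires $\lambda>1$.
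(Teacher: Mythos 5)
Your argument is correct and is essentially the paper's proof: both restrict to the $k$-dimensional subspace $V$ achieving $\Xi_k(\diagpar{e^s}A)=\Xi_k(\diagpar{e^s}A\Pi_V)$ at the relevant point, factor $\Xi_k(\diagpar{e^s}A\Pi_V)=\Xi_k(\diagpar{e^s}\Pi_{A(V)})+\Xi_k(A\Pi_V)$, invoke Lemma~\ref{lem:convex1}, and handle $h$ by passing to the adjoint --- your ``supremum of convex functions'' phrasing is just the standard repackaging of the paper's direct three-point comparison. One small correction to a side remark: the nonzero singular values of $M\Pi_V$ are in general \emph{not} among those of $M$ (take $M=\diagmat(2,1)$ and $V$ spanned by $(1,1)/\sqrt 2$), but the inequality $\Xi_k(M\Pi_V)\le\Xi_k(M)$ that you actually need is still true, since $s_j(M\Pi_V)\le s_j(M)\,\|\Pi_V\|_\op=s_j(M)$ for every $j$.
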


\begin{proof}
Let $0<a<b<c$.
Let $V$ be the $k$-dimensional space spanned by the top $k$ right singular vectors
of $\diagpar{e^b}A$ and $\Pi_V$ be the orthogonal projection onto $V$.
Let $W=A(V)$ and $\Pi_W$ be the orthogonal projection onto $W$.
Then we have $\Xi_k(\diagpar{e^t}A\Pi_V)=
\Xi_k(\diagpar{e^t}\Pi_W)+\Xi_k(A\Pi_V)$, the sum of a convex function
and a constant by Lemma \ref{lem:convex1}.
Now $g(b)=\Xi_k(\diagpar{e^b}A)=\Xi_k(\diagpar{e^b}A\Pi_V)
\le \frac{c-b}{c-a}\Xi_k(\diagpar{e^a}A\Pi_V)
+\frac{b-a}{c-a}\Xi_k(\diagpar{e^c}A\Pi_V)
\le \frac{c-b}{c-a}g(a)+\frac{b-a}{c-a}g(c)$
as required.

We have $h(s)=\Xi_k(A\diagpar{e^s})=\Xi_k(\diagpar{e^s}A^*)$,
which is convex by the above.
\end{proof}

\begin{prop}\label{prop:Xidiff}
Let $A$ be a Hilbert-Schmidt operator on $H$.
Then
$$
\Xi_k(\diagpar3A\diagpar3)-\Xi_k(A)\ge
\left(\frac{\log 3}{\log 2}\right)^2\big(\Xi_k(\diagpar2A\diagpar2)-\Xi_k(A)\big).
$$
\end{prop}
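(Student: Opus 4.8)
The plan is to recast everything in terms of the two-variable function
$H(s,t)=\Xi_k(\diagpar{e^s}A\diagpar{e^t})$ for $s,t\ge 0$, and to exploit the \emph{separate} convexity that Lemma~\ref{lem:convex2} already provides: for each fixed $t$, the map $s\mapsto H(s,t)=\Xi_k\big(\diagpar{e^s}(A\diagpar{e^t})\big)$ is convex (this is the ``$g$'' of Lemma~\ref{lem:convex2} applied to the Hilbert--Schmidt operator $A\diagpar{e^t}$), and for each fixed $s$ the map $t\mapsto H(s,t)=\Xi_k\big((\diagpar{e^s}A)\diagpar{e^t}\big)$ is convex (the ``$h$'' of Lemma~\ref{lem:convex2} applied to $\diagpar{e^s}A$). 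Put $a=\log 2$, $b=\log 3$ and $\theta=a/b\in(0,1)$, so that $\diagpar{e^a}=\diagpar 2$, $\diagpar{e^b}=\diagpar 3$, and the quantities in the statement are $H(0,0)=\Xi_k(A)$, $H(a,a)=\Xi_k(\diagpar2 A\diagpar2)$, $H(b,b)=\Xi_k(\diagpar3 A\diagpar3)$; the claim to be proved is exactly $H(b,b)-H(0,0)\ge\theta^{-2}\big(H(a,a)-H(0,0)\big)$. We may assume $\Xi_k(A)>-\infty$, since otherwise $A$ has rank $<k$, hence so do $\diagpar 3A\diagpar 3$ and $\diagpar 2A\diagpar 2$, and the statement is vacuous.

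First I would apply the separate convexity three times. Since $a=\theta b+(1-\theta)\cdot 0$, convexity in the first slot (with $t=a$ fixed) gives $H(a,a)\le\theta H(b,a)+(1-\theta)H(0,a)$; convexity in the second slot (with $s=b$, resp.\ $s=0$, fixed) gives $H(b,a)\le\theta H(b,b)+(1-\theta)H(b,0)$ and $H(0,a)\le\theta H(0,b)+(1-\theta)H(0,0)$. Combining these yields
\[
H(a,a)\ \le\ \theta^2 H(b,b)\ +\ \theta(1-\theta)\big(H(b,0)+H(0,b)\big)\ +\ (1-\theta)^2 H(0,0).
\]

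The only remaining ingredient is the crude estimate $H(b,0)+H(0,b)\le 2H(0,0)$, i.e.\ $\Xi_k(\diagpar{e^b}A)\le\Xi_k(A)$ and $\Xi_k(A\diagpar{e^b})\le\Xi_k(A)$; this holds because $\diagpar{e^b}$ is a strict contraction ($\|\diagpar{e^b}\|_{\op}=e^{-b}<1$), so $s_j(\diagpar{e^b}A)\le e^{-b}s_j(A)\le s_j(A)$ for every $j$ by the min--max characterization in Lemma~\ref{lem:sv}(a) (equivalently this is sub-additivity of $\Xi_k$ together with $\Xi_k(\diagpar{e^b})=-bk(k+1)/2<0$). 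Substituting $H(b,0)+H(0,b)\le 2H(0,0)$ into the displayed inequality and using $2\theta(1-\theta)+(1-\theta)^2=1-\theta^2$ gives $H(a,a)\le\theta^2 H(b,b)+(1-\theta^2)H(0,0)$, that is $H(a,a)-H(0,0)\le\theta^2\big(H(b,b)-H(0,0)\big)$; dividing by $\theta^2>0$ produces $H(b,b)-H(0,0)\ge\theta^{-2}\big(H(a,a)-H(0,0)\big)$, which is the assertion, since $\theta^{-2}=(\log 3/\log 2)^2$.

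I do not expect a genuine obstacle here; the content is essentially the observation that the ``square'' $(\log 3/\log 2)^2$, rather than a first power, is available precisely because there are two independent copies of the conjugating diagonal operator, so separate convexity can be invoked once for each slot. The only points requiring a little care are routine: that Lemma~\ref{lem:convex2} is being applied to the Hilbert--Schmidt operators $A\diagpar{e^a}$ and $\diagpar{e^b}A$ (they are Hilbert--Schmidt because $\diagpar{e^s}$ is bounded), and that $s=0$, i.e.\ $\diagpar{e^0}=I$, is a legitimate endpoint of the convex functions involved.
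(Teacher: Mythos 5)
Your proof is correct and uses essentially the same ingredients and structure as the paper's: the separate convexity of $s\mapsto\Xi_k(\diagpar{e^s}A\diagpar{e^t})$ and $t\mapsto\Xi_k(\diagpar{e^s}A\diagpar{e^t})$ from Lemma~\ref{lem:convex2}, combined with the contractivity bounds $\Xi_k(\diagpar{e^b}A),\Xi_k(A\diagpar{e^b})\le\Xi_k(A)$. The paper merely organizes the bookkeeping slightly differently (subtracting $\Xi_k(A)$ first and chaining two convexity steps through the point $(\log 3,\log 2)$, with the contraction facts absorbed as non-positivity of $f$ at the mixed endpoints), so there is nothing substantive to add.
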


\begin{proof}
Let $f(s,t)=\Xi_k(\diagpar{e^s}A\diagpar{e^t})-\Xi_k(A)$.
Since $\diagpar a$ is contractive for $a>1$, we have $f(\log 3,0)\le 0$
and $f(0,\log 2)\le 0$.
Now Lemma \ref{lem:convex2} applied to $\Xi_k(\diagpar3A\diagpar{e^t})-\Xi_k(A)$
implies that $f(\log 3,\log 2)\le \frac{\log 2}{\log 3}f(\log 3,\log 3)$.
Applying the lemma to $\Xi_k(\diagpar{e^s}A\diagpar 2)-\Xi_k(A)$ implies
$$
f(\log 2,\log 2)\le \frac{\log 2}{\log 3}f(\log 3,\log 2)\le
\left(\frac{\log 2}{\log 3}\right)^2f(\log 3,\log 3),
$$
as required.
\end{proof}

\begin{lem}\label{lem:subaddcor}
Let $\sigma$ be an ergodic
measure-preserving transformation of $(\Sigma,\mathbb P)$.
Let $(f_n)$ be a sub-additive sequence of functions (that is
$f_{n+m}(\omega)\le f_n(\sigma^m\omega)+f_m(\omega)$ for each
$\omega\in\Omega$ and $n,m>0$) such that
$\inf_{n>0}\int \frac 1nf_n\,d\PP>-\infty$.
For any $\epsilon>0$, there exist $\chi>0$ and $n_0$ such that if
$M\ge n_0$ and $A$ is any set with $\PP(A)<\chi$
then $\int_A f_M\,d\PP > -\epsilon M$.
\end{lem}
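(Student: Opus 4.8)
The plan is to deduce the statement from the $L^1$ part of Kingman's subadditive ergodic theorem, after which it becomes the elementary fact that an $L^1$ function contributes little over sets of small measure.

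First I would record what Kingman's theorem gives here. The real sequence $n\mapsto\int f_n\,d\PP$ is subadditive, since by $\sigma$-invariance of $\PP$ one has $\int f_{n+m}\,d\PP\le\int f_n\circ\sigma^m\,d\PP+\int f_m\,d\PP=\int f_n\,d\PP+\int f_m\,d\PP$; hence $\frac1n\int f_n\,d\PP$ converges to $\Lambda:=\inf_{n>0}\int\frac1nf_n\,d\PP$, which is finite by hypothesis. Kingman's theorem then gives that $\frac1nf_n$ converges $\PP$-a.e.\ to a $\sigma$-invariant limit, which by ergodicity equals the constant $\Lambda$ almost everywhere; and, $\Lambda$ being finite, the convergence also holds in $L^1$, so $\big\|\tfrac1Mf_M-\Lambda\big\|_{L^1}\to0$ as $M\to\infty$.

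Given $\epsilon>0$, I would choose $n_0$ large enough that $\big\|\tfrac1Mf_M-\Lambda\big\|_{L^1}<\epsilon/2$ for every $M\ge n_0$, and set $\chi=\epsilon/(2|\Lambda|+1)$, so that $|\Lambda|\chi<\epsilon/2$. For $M\ge n_0$ and any measurable $A$ with $\PP(A)<\chi$, split
$$
\int_Af_M\,d\PP=\int_A\big(f_M-M\Lambda\big)\,d\PP+M\Lambda\,\PP(A).
$$
The first term is at least $-\int_{\Sigma}\big|f_M-M\Lambda\big|\,d\PP=-M\big\|\tfrac1Mf_M-\Lambda\big\|_{L^1}>-\tfrac12\epsilon M$, while the second is at least $-M|\Lambda|\,\PP(A)\ge-M|\Lambda|\chi\ge-\tfrac12\epsilon M$. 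Adding the two estimates yields $\int_Af_M\,d\PP>-\epsilon M$, as required.

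The only point needing care is the appeal to the $L^1$ (rather than merely a.e.) convergence in Kingman's theorem, which presupposes $f_1^+\in L^1$; together with $\Lambda>-\infty$ (exactly our hypothesis) this is what upgrades a.e.\ to $L^1$ convergence. In every application in this paper the $(f_n)$ are $k$-dimensional log-volume growth quantities, dominated along the cocycle by $\sum_{i<n}k\log^+\|A_{\sigma^i\omega}\|_\SHS$, so integrability of $f_1^+$ follows from the standing assumption $\int\log\|A_\omega\|_\SHS\,d\PP<\infty$. Beyond this input the argument is purely the observation that the total mass of $\big|\tfrac1Mf_M-\Lambda\big|\,d\PP$ is small, hence its mass on $A$ is small, and the constant part $M\Lambda\,\PP(A)$ is controlled by taking $\PP(A)$ small.
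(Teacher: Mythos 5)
Your argument is correct, but it takes a genuinely different (and more direct) route than the paper's. You invoke the $L^1$ part of Kingman's theorem, $\bigl\|\tfrac1M f_M-\Lambda\bigr\|_{L^1}\to0$, and then the claim is just absolute continuity plus the trivial bound $M\Lambda\,\PP(A)\ge -M|\Lambda|\chi$; the numerics ($\epsilon/2+\epsilon/2$) check out. The paper instead uses only the convergence-in-probability consequence of Kingman: it chooses $\chi$ so that $\PP\{f_M>(\alpha+\tfrac\epsilon3)M\}<\chi$ for large $M$, splits $\Omega$ into $A$, the remaining good set $G$, and the exceptional set $B$, controls $\int_B f_M$ from above via the pointwise subadditive bound $f_M\le f_1+\cdots+f_1\circ\sigma^{M-1}$ together with uniform absolute continuity of $\int f_1^+$, and then solves for $\int_A f_M$ from $\alpha M\le\int_\Omega f_M$. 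In effect the paper re-derives by hand exactly the consequence of $L^1$ convergence that you quote, so your proof is shorter at the cost of citing a slightly stronger form of Kingman's theorem. Both arguments share the same hidden hypothesis: the lemma as stated only assumes $\inf_n\tfrac1n\int f_n>-\infty$, but Kingman (in either form) and the paper's absolute-continuity step for $\int_B f_1$ both require $f_1^+\in L^1$. You correctly flag this and note it holds in every application via $\int\log^+\|A_\omega\|_\SHS\,d\PP<\infty$, so this is not a gap relative to the paper's own standard of rigour.
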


\begin{proof}
Let $\alpha=\lim\int (f_n/n)\,d\PP$. Let $\epsilon>0$ be given.
Let $\chi$ be small enough that $\int_B f_1\,d\PP<\frac \epsilon3$ for any set
$B$ with $\PP(B)\le \chi$ and so that $2\chi(\alpha+\frac\epsilon3)
>-\frac\epsilon3$.
By the Kingman sub-additive ergodic theorem, there exists $m_0$ such that
for $M\ge m_0$,
$\PP(\{\omega\colon f_M(\omega)>(\alpha+\frac\epsilon3) M\})<\chi$.

Now let $A$ be an arbitrary set with $\PP(A)<\chi$.
We split $\Omega$ into three sets: $A$, $G=\{\omega\in A^c\colon
f_M(\omega)\le (\alpha+\frac \epsilon3) M\}$ and $B=A^c\setminus G$
(and note that $\PP(G^c)\le2\chi$).
Now we have
\begin{align*}
\alpha M&\le \int_\Omega f_M\,d\PP\\
&=\int_A f_M\,d\PP+\int_B f_M\,d\PP + \int_G f_M\,d\PP\\
&\le \int_A f_M\,d\PP+
\int_B (f_1+\ldots+f_1\circ\sigma^{M-1})\,d\PP
+(\alpha+\tfrac\epsilon3) M\PP(G).
\end{align*}
Hence we see
\begin{align*}
\int_A f_M\,d\PP&
\ge \alpha M-M\tfrac\epsilon3-(\alpha+\tfrac\epsilon3)M(1-\PP(G^c))\\
&=-\tfrac{2\epsilon}3M+(\alpha+\tfrac\epsilon3)M\PP(G^c)\ge -\epsilon M,
\end{align*}
as required.
\end{proof}

\begin{lem}\label{lem:trivial}
For all $k$, there exists a $C_{\ref{lem:trivial}}$ such that for any
bounded operator $A$ one has
$$
\Xi_k(A)\ge \tilde\Xi_k(A)-C_{\ref{lem:trivial}}.
$$
\end{lem}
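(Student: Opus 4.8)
The plan is to bound the integrand defining $\tilde\Xi_k(A)=\E\,\Xi_k(\Pi_k\Delta A\Delta'\Pi_k)$ pointwise by $\Xi_k(A)$ plus terms involving only the perturbations, and then integrate. The two facts I would use are the sub-additivity of $\Xi_k$, i.e.\ $\Xi_k(BC)\le\Xi_k(B)+\Xi_k(C)$ for bounded operators $B,C$ (a consequence of the submultiplicativity $\prod_{i\le k}s_i(BC)\le\prod_{i\le k}s_i(B)\prod_{i\le k}s_i(C)$ of products of singular values, and of the same flavour as the sub-additivity of $\Xi_k$ used elsewhere in the paper), together with the trivial observation that $\Xi_k(\Pi_k)=0$ since $s_i(\Pi_k)=1$ for $i\le k$.

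Combining these, for every realisation of the independent copies $\Delta,\Delta'$ one has $\Xi_k(\Pi_k\Delta A\Delta'\Pi_k)\le 2\Xi_k(\Pi_k)+\Xi_k(\Delta)+\Xi_k(A)+\Xi_k(\Delta')=\Xi_k(\Delta)+\Xi_k(A)+\Xi_k(\Delta')$. Since the right-hand side is bounded above by an integrable function, taking expectations over $\Delta$ and $\Delta'$ gives $\tilde\Xi_k(A)\le\Xi_k(A)+2\,\E\,\Xi_k(\Delta)$, so it remains only to check that $c_k:=\E\,\Xi_k(\Delta)$ is a finite constant depending on $k$ alone; the lemma then holds with $C_{\ref{lem:trivial}}=\max(2c_k,0)$ (and if $\tilde\Xi_k(A)=-\infty$ the statement is vacuous).

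Finiteness of $c_k$ is where the (modest) care is needed, and I expect the lower bound to be the only non-routine point. For the upper bound, $\Xi_k(\Delta)\le k\log^+\|\Delta\|_\op\le k\log^+\|\Delta\|_\HS\le k\|\Delta\|_\HS$, and $\E\|\Delta\|_\HS\le(\E\|\Delta\|_\HS^2)^{1/2}=\big(\sum_{i,j}3^{-2(i+j)}\big)^{1/2}<\infty$. For the lower bound, again by sub-additivity and $\Xi_k(\Pi_k)=0$ we get $\Xi_k(\Delta)\ge\Xi_k(\Pi_k\Delta\Pi_k)=\log|\det M|$, where $M=(\langle\Delta e_i,e_j\rangle)_{i,j=1}^k$ is the top-left $k\times k$ block of $\Delta$, a matrix of independent centred Gaussian entries; as already used in the proof of Lemma~\ref{lem:PS}, $\log|\det M|$ is integrable, so $c_k\ge\E\log|\det M|>-\infty$. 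This completes the plan.
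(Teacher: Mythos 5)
Your argument is correct and essentially identical to the paper's proof: both use sub-additivity of $\Xi_k$ to get $\tilde\Xi_k(A)\le\Xi_k(A)+2\,\E\,\Xi_k(\Delta)$ and then bound $\E\,\Xi_k(\Delta)\le k\,\E\log\|\Delta\|_\op$ via $\E\|\Delta\|_\HS<\infty$. The only difference is that you also prove $\E\,\Xi_k(\Delta)>-\infty$, which is harmless but unnecessary here, since the lemma only requires an upper bound on $\E\,\Xi_k(\Delta)$.
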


\begin{proof}
We have $\tilde\Xi_k(A)=\E_{\Delta_1,\Delta_2}\Xi_k(\Pi_k\Delta_1 A\Delta_2\Pi_k)
\le 2\E\Xi_k(\Delta)+\Xi_k(A)\le 2k\E\log \|\Delta\|_\op+\Xi_k(A)$, where
we used sub-additivity of $\Xi_k$ for the first inequality and the
fact that $s_i(B)\le\|B\|_\op$ for the second. Hence it suffices
to show that $\E\log\|\Delta\|_\op<\infty$. But $\E\log\|\Delta\|_\op
\le \E\|\Delta\|_\op\le \E\|\Delta\|_\HS\le\sum_{i,j}\E|\Delta_{ij}|=
\sum_{i,j}3^{-(i+j)}\E|N|<\infty$.
\end{proof}

\section{Convergence of the Lyapunov exponents}\label{sec:puttingitalltogether}

\begin{proof}[Proof of Theorem \ref{thm:exponents}]
Rather than control the exponents directly, it is more straightforward, and clearly
equivalent, to control the partial sums of the exponents. Let $\mu_1(A)\ge
\mu_2(A)\ge\ldots$ denote the Lyapunov exponents of the cocycle $A$ listed
with multiplicity in decreasing order. We then let $\Lambda_k(A)=\mu_1(A)+
\ldots+\mu_k(A)$. We are aiming to show that $\Lambda_k(\pertA)\to
\Lambda_k(A)$ for each $k$. By an argument of Ledrappier and Young
\cite{LedrappierYoung},
explained slightly differently in our earlier paper \cite{FGTQ-cpam},
it suffices to show
that $\epsilon\mapsto\Lambda_k(\pertA)$ is upper semi-continuous for each $k$;
and lower semi-continuous for those $k$ such that $\mu_{k+1}(A)<\mu_k(A)$.

\subsection{Upper semi-continuity} We shall
show $\limsup_{\epsilon\to 0}\Lambda_k(\pertA)\le \Lambda_k(A)$.
To see this, let $\eta>0$. By the sub-additive ergodic theorem, there exists
an $n$ such that $\frac 1n\int\Xi_k(\cocycle n\om)\,d\PP(\om)<\Lambda_k(A)+\eta$.
As $\epsilon\to 0$, we have $\|\cocyclepss n\oom-\cocycle n\om\|\to 0$
and hence $\Xi_j(\cocyclepss n\oom)\to\Xi_j(\cocycle n\om)$ for all $\oom\in
\bar\Omega$. Set $g(\oom)=1+\|A(\omega_0)\|$ and $h(\oom)=\|\Delta_0\|$.
Then for $\epsilon<1$, $\log\|\cocyclepss n\oom\|\le\sum_{i=0}^{n-1}
\log(g+h)(\bar\sigma^i\oom)$. Since this is integrable, the Reverse Fatou Lemma
implies that $\limsup_{\epsilon\to 0}\frac 1n\int\Xi_j(\cocyclepss n\oom)
\,d\bar\PP(\oom)<\Lambda_k(A)+\eta$. Hence $\Lambda_k(\pertA)<
\Lambda_k(A)+\eta$ for sufficiently small $\epsilon$.

\subsection{Choice of Parameters}Now we move to showing the lower
semi-continuity of $\Lambda_k(\pertA)$
in the case where $\mu_{k+1}(A)<\mu_k(A)$. We assume without loss of generality
(by scaling the entire cocycle by a constant if necessary) that $\mu_{k+1}(A)<0<
\mu_k(A)$.

Let $\eta>0$.
We are seeking an $\epsilon_0$ such that for $\epsilon<\epsilon_0$,
$\Lambda_k(\pertA)>\Lambda_k(A)-\eta$.
First, choose an $n_0$ and $\chi$ such that the following inequalities are satisfied:
\begin{align*}
&\chi<
\min\left(
\frac{C_{\ref{lem:goodPert}}\eta}
{48\max(C_{\ref{prop:splitting}},C_{\ref{cor:twoglue}})},
\frac\eta{18\max(C_{\ref{prop:step2tilde}},C_{\ref{prop:badtriangleineq}}
(1+\frac2{C_{\ref{lem:goodPert}}}))}\right);\\
&\chi<\frac\eta{72k\int \log(1+\|A_\om\|_\SHS)\,d\PP(\om)};\\
&\int_B\Xi_k(\cocycle N\om)\,d\PP(\om)>-\frac{\eta N}{72}\text{
for $N\ge n_0$ if $\PP(B)<\chi$;}\\
&\int_B\log^+\|A_\om\|_\SHS\,d\PP(\om)
<\frac{\eta}{108k}\text{ if $\PP(B)<\chi$.}
\end{align*}
That $n_0$ and $\chi$ can be chosen to satisfy the third inequality is a consequence of
Lemma \ref{lem:subaddcor}. Let $\delta$ be chosen so that $\PP(G^c)<\chi/2$,
where $G$ is the event that the block $\cocycle N\om$ is good
as in Lemma \ref{lem:goodBlocks}. Let $\epsilon_1$ be chosen
so that $N_\epsilon:=\lfloor C_{\ref{lem:goodPert}}|\log\epsilon|\rfloor>n_0$ for all
$\epsilon<\epsilon_1$.
Let $\epsilon_2$ be such that the probability that an $N_\epsilon$-block of
$\Delta$'s contains a wild perturbation
is less than $\chi/2$ for all $\epsilon<\epsilon_2$
(such an $\epsilon_2$ exists by Lemma \ref{lem:goodPert}).
Let $\bar{G}=\{ \oom \in \bar \Om: \om \in G; \Delta_0, \dots, \Delta_{N-1} \text{ are tame}\}$.
We will only consider $\epsilon$'s that are smaller than $\epsilon_1$ and $\epsilon_2$
for the remainder of the argument. In particular $\bar{\PP}(\bar G ^c)<\chi$.

We need to control $\E\Xi_k(\cocycle{nN}\oom)$, where $N$ is the length
of a block (as given by Lemma \ref{lem:goodPert}),
 and we let $n\to\infty$. Here and below, the superscript $\epsilon$
indicates that we are studying the perturbed cocycle.

\subsection{Replacing $\Xi_k$ with $\tilde\Xi_k$}We have
\begin{equation}
\Xi_k(\cocyclepss{nN}\oom)
\ge\; \tilde\Xi_k(\cocyclepss{nN}{\oom})-C_{\ref{lem:trivial}},
\label{eq:XigeXitildenew}
\end{equation}
by Lemma \ref{lem:trivial}. The advantage of $\tilde\Xi_k$ over $\Xi_k$ is
that it admits a lower bound in terms of sub-blocks.

\subsection{Splitting $\cocyclepss{nN}\oom$ into good and bad blocks}
Recall a block $\cocyclepss N{\bar\sigma^{jN}\oom}$ is said to be good if
$\sigma^{jN}\oom\in \bar G$, that is the unperturbed cocycle is
well-behaved, and the perturbations are tame. Given $\oom$,
we split up $\cocyclepss{nN}\oom$ into blocks of length $N$.
Whenever three or more consecutive blocks are good, we form a
\emph{super-block}, $G^\epsilon$,
consisting of the concatenation of the good blocks other than
the first and last good blocks. All of the remaining blocks are called \emph{filler} blocks.
The $B^\epsilon$ are the filler blocks stripped of their first and last matrices.

We have
\begin{equation}\label{eq:blocksplitnew}
\begin{split}
&\E\Big(\tilde\Xi_k(\cocyclepss{nN}\oom)\Big)\ge\\
& \E\Big(\tilde\Xi_k(B^\epsilon)+\tilde\Xi_k(G^\epsilon) + \tilde\Xi_k(B^\epsilon)
+\tilde\Xi_k(B^\epsilon) + \tilde\Xi_k(B^\epsilon)+\ldots\Big)-E_1,
\end{split}
\end{equation}
where the splitting in the last line is into super-blocks (of variable length, all
a multiple of $N$), here designated by $G^\epsilon$,
and filler blocks, $B^\epsilon$, all of length $N-2$ and $E_1$ denotes an expected error term
that we now estimate.

To obtain \eqref{eq:blocksplitnew}, we split the concatenation
of $n$ blocks of length $N$ into
the super-blocks and filler blocks as described  above by repeatedly
applying Proposition \ref{prop:splitting},
which sacrifices a single matrix as `glue' at each splitting
site (or Corollary \ref{cor:twoglue}
in the case of two consecutive filler blocks when two matrices are sacrificed).
Since the expected number of non-good $N$-blocks is less than $\chi n$
and each such block gives rise to at most 4 transitions between 
adjacent blocks in the concatenation (the worst case happens when two
super-blocks are joined by three fillers),
we deduce $E_1\le 4\chi n\max(C_{\ref{prop:splitting}},
C_{\ref{cor:twoglue}})|\log\epsilon|$.
From Lemma \ref{lem:goodPert}, $|\log\epsilon|\le 2N/C_{\ref{lem:goodPert}}$, so that
\begin{equation}\label{eq:E1est}
E_1\le 8\chi nN\max(C_{\ref{prop:splitting}},C_{\ref{cor:twoglue}})/
C_{\ref{lem:goodPert}}
\le \tfrac16\eta nN.
\end{equation}

\subsection{Comparison of $\tilde\Xi_k(G^\ep)$ and $\Xi_k(G^\ep)$}
To bound one of the $\tilde\Xi_k(G^\epsilon)$, the contribution
from one of the super-blocks,
we first compare to $\Xi_k(G^\epsilon)$, the corresponding contribution to the
genuine singular values; and then compare to $\Xi_k(G^0)$, the singular values
of the unperturbed block. Recall that each $G^\epsilon$ is preceded
by an $N$-block $L^\epsilon$
and followed by an $N$-block $R^\epsilon$ such that the  enlarged
block $L^\epsilon G^\epsilon
R^\epsilon$ consists entirely of good blocks.

For the first comparison, we have
\begin{equation}\label{eq:conv}
\begin{split}
\tilde\Xi_k(G^\epsilon)&\ge \Xi_k(\diag_3G^\epsilon\diag_3)-
C_{\ref{prop:XikvsXiktilde}}\\
&\ge \Xi_k(G^\epsilon)+3(\Xi_k(\diag_2G^\epsilon\diag_2)-\Xi_k(G^\epsilon))-
C_{\ref{prop:XikvsXiktilde}},
\end{split}
\end{equation}
using Propositions \ref{prop:XikvsXiktilde} and \ref{prop:Xidiff} respectively.
Now
\begin{equation}\label{eq:subadditivity}
\Xi_k(\diag_2G^\epsilon\diag_2)\ge
\Xi_k(L^\epsilon G^\epsilon R^\epsilon)-\Xi_k(L^\epsilon\diag_2^{-1})-
\Xi_k(\diag_2^{-1}R^\epsilon)
\end{equation}
by sub-additivity, and
\begin{equation}\label{eq:goodness}
\begin{split}
&\Xi_k(L^\epsilon G^\epsilon R^\epsilon)
\ge
\log|\det(L^\epsilon G^\epsilon R^\epsilon|_{F^{\perp}(R_0)})|\\
&=
\log|\det(L^\epsilon|_{G^\epsilon R^\epsilon(F^\perp(R_0))})|
+
\log|\det(G^\epsilon|_{R^\epsilon(F^\perp(R_0))})|\\
&\quad+
\log|\det(R^\epsilon|_{F^\perp(R_0)})|\\
&
\ge
\Xi_k(L^\epsilon)+\Xi_k(G^\epsilon)+\Xi_k(R^\epsilon)+3k\log\delta,
\end{split}
\end{equation}
where we made use of Proposition \ref{prop:almostXi} for the second inequality
(Lemmas~\ref{lem:ly}\eqref{it:contr} and \ref{lem:goodBlocks}\eqref{it:sep}, \eqref{it:Econt}
and \eqref{it:Fcont} were used to ensure the hypotheses of that Proposition are satisfied).
Combining inequalities \eqref{eq:conv}, \eqref{eq:subadditivity} and
\eqref{eq:goodness}, we obtain
\begin{equation*}
\begin{split}
\tilde\Xi_k(G^\epsilon)\ge \Xi_k(G^\epsilon)+3\Big(&\Xi_k(L^\epsilon)+\Xi_k(R^\epsilon)
-\Xi_k(L^\epsilon\diag_2^{-1})\\
&-\Xi_k(\diag_2^{-1}R^\epsilon)+3k\log\delta\Big)-C_{\ref{prop:XikvsXiktilde}}.
\end{split}
\end{equation*}
By Lemmas~\ref{lem:sv}\eqref{it:sjcont}, \ref{lem:goodBlocks}\eqref{it:svsep} 
and \ref{lem:goodPert}, we have $\Xi_k(L^\epsilon)$
and $\Xi_k(R^\epsilon)$ are non-negative.
By Lemma \ref{lem:goodBlocks}\eqref{it:logSHS}, using sub-additivity, we have
$\Xi_k(L^\epsilon\diagpar2^{-1}),\Xi_k(\diagpar2^{-1}R^\epsilon)
\le 2kN\int \log(1+\|A_\om\|_\SHS)\,d\PP(\om)$.
Hence for each good block, we have
\begin{equation*}
\tilde\Xi_k(G^\epsilon)
\ge \Xi_k(G^\epsilon)-\eta N/(6\chi)
+9k\log\delta-C_{\ref{prop:XikvsXiktilde}}.
\end{equation*}

\subsection{Comparison of $\Xi_k(G^\ep)$ and $\Xi_k(G^0)$}
Next, by Proposition \ref{prop:GlueingGoodBlocks}, we have
$\Xi_k(G^\epsilon)\ge \Xi_k(G^0)+2k\ell\log\delta$,
where $\ell$ is the number of blocks forming the $G^\epsilon$ super-block, so that
overall, for each good block, we have
\begin{equation}\label{eq:goodcomp}
\tilde\Xi_k(G^\epsilon)\ge \Xi_k(G^0)-\eta N/(6\chi)
+11k\ell\log\delta-C_{\ref{prop:XikvsXiktilde}},
\end{equation}
where $G^0$ is the corresponding unperturbed block.

In summary,
\begin{equation}\label{eq:blocksplitnew2}
\begin{split}
&\E\Big(\tilde\Xi_k(A_\omega^{\epsilon(nN)})\Big)\ge\\
& \E\Big(\tilde\Xi_k(B^\epsilon)+\Xi_k(G^0) + \tilde\Xi_k(B^\epsilon)
+\tilde\Xi_k(B^\epsilon) + \tilde\Xi_k(B^\epsilon)+\ldots\Big)-E_1-E_2,
\end{split}
\end{equation}
where $E_2$ is the combined contribution of  the errors coming from 
good blocks via \eqref{eq:goodcomp}.

\subsection{Comparison of $\E\tilde\Xi_k(B^\ep)$ and $\tilde\Xi_k(B^0)$}
We next work on giving a lower bound for the terms of the form
$\E\tilde\Xi_k(B^\epsilon)$. It turns out to be
convenient to bound this in the opposite order than the way we obtained bounds for
$\E\tilde\Xi_k(G^\epsilon)$. Namely, we show $\E\tilde\Xi_k(B^\epsilon)
\gtrsim \tilde\Xi_k(B^0)\gtrsim \Xi_k(B^0)$.

If the filler block $B^\ep=\cocyclepss{N-2}{\bar\sigma^{jN+1}\oom}$
 is not type II bad, we have $\E\tilde\Xi_k(B^\epsilon)
\ge \tilde\Xi_k(B^0)-C_{\ref{prop:step2tilde}}N$ by Proposition
\ref{prop:step2tilde}, where $B^0=\cocycle{N-2}{\sigma^{jN+1}\om}$, the unperturbed block.
When $B^\ep$ is type II
bad, we have $\E\tilde\Xi_k(B^\epsilon)\ge \tilde\Xi_k(B^0)+
C_{\ref{prop:badtriangleineq}}(\log\epsilon-N)$ by Proposition~\ref{prop:badtriangleineq}. Since
by Lemma \ref{lem:goodPert}, we have $\log\epsilon>-2N/C_{\ref{lem:goodPert}}$,
we get $\E\tilde\Xi_k(B^\epsilon)\ge \tilde\Xi_k(B^0)-
C_{\ref{prop:badtriangleineq}}N(1+2/C_{\ref{lem:goodPert}})$ in this case.
We therefore have in either case that
\begin{equation}\label{eq:xitbepsvsxitb0}
\E\tilde\Xi_k(B^\epsilon)\ge \tilde\Xi_k(B^0)-\eta/(18\chi)N,
\end{equation}

\subsection{Comparison of $\tilde\Xi_k(B^0)$ and $\Xi_k(B^0)$}
For the estimate $\tilde\Xi_k(B^0)\gtrsim\Xi_k(B^0)$, we use an argument
similar to that in \eqref{eq:conv} and \eqref{eq:subadditivity} above.
Namely, let the matrices preceding and following $B^0$ in the unperturbed
cocycle be $L^0$ and $R^0$. We also write $\bar B^0= \cocycle N{\sigma^{jN}\om}$
for the $N$-block, $L^0B^0R^0$.
Then as before, we have
\begin{equation}\label{eq:badcomp}
\begin{split}
&\tilde\Xi_k(B^0)\ge \Xi(B^0)+3(\Xi_k(\diagpar 2B^0\diagpar2)-\Xi_k(B^0)) -
C_{\ref{prop:XikvsXiktilde}}\\
&\ge \Xi_k(B^0)+3\big(\Xi_k(\bar B_0)-\Xi_k(L^0\diag_2^{-1})
-\Xi_k(\diag_2^{-1}R^0)-\Xi_k(B^0)\big)- C_{\ref{prop:XikvsXiktilde}}\\
&= \Xi_k(\bar B^0)+2\big(\Xi_k(\bar B^0)-\Xi_k(B^0)\big)-3\big(\Xi_k(L^0\diag_2^{-1})
+\Xi_k(\diag_2^{-1}R^0)\big)- C_{\ref{prop:XikvsXiktilde}}.
\end{split}
\end{equation}
We have the estimate for the subtracted terms in \eqref{eq:badcomp}:
\begin{equation*}
2\Xi_k(B^0)+3(\Xi_k(L^0\diagpar2^{-1})+\Xi_k(\diagpar2^{-1}R^0))
\le 3kF(\sigma^{jN}\omega),
\end{equation*}
where $F(\omega)=\sum_{i=0}^{N-1}\log^+\|A(\sigma^i\omega)\|_\SHS$.
This is a consequence of sub-additivity of $\Xi_k$, the fact that 
$\|A\diagpar2^{-1}\|_{\op}, \|A\|_{\op}\leq \|A\|_{\SHS}$ for every 
$A\in \SHS$ and $\Xi_k(A) \leq k \log \|A\|_{\op}$.
By the choice of $\chi$, we have
$\int_{\bar G^c}F(\omega)\,d\bar\PP(\oom)<\eta N/(108k)$.
The combined contribution from the subtracted terms in \eqref{eq:badcomp}
to all of the $\tilde\Xi_k(B^\epsilon)$ terms in \eqref{eq:blocksplitnew}
is bounded above by
$$
3k\sum_{j=0}^{n-1}
\mathbf 1_{\bad}(\bar\sigma^{jN}\oom)
F(\sigma^{jN}\om),
$$
where $\bad$ is $\bar G^c\cup \bar\sigma^{-N}\bar G^c\cup \bar\sigma^N \bar G^c$,
the set of points which are the first index of a filler block.
Hence the expectation of the contribution of the subtracted terms in
\eqref{eq:badcomp} is at most $\eta nN/12$.

We use a similar argument to give a lower bound for the sum of the
added $2\Xi_k(\bar B_0)$ terms in \eqref{eq:badcomp}. These terms
are
\begin{equation}\label{eq:fillerAdd}
2\sum_{j=0}^{n-1}
\mathbf 1_\bad(\bar\sigma^{jN}\oom)
\Xi_k(\cocycle N{\sigma^{jN}\om}).
\end{equation}
By the choice of $\chi$, $\int_B\Xi_k(\cocycle N\om)\ge -\eta N/72$ for any
set, $B$, of measure at most $\chi$.
Hence, the expected value of the expression in \eqref{eq:fillerAdd} is bounded below by
$-\eta nN/12$.

Combining these estimates along all filler blocks occuring in \eqref{eq:blocksplitnew}, we see
\begin{equation}\label{eq:xitildevsxi}
\E\left(\sum_{j=0}^{n-1}\mathbf 1_\bad(\bar\sigma^{jN}\oom)
\big(\tilde \Xi_k(\cocycle{N-2}{\sigma^{jN+1}\om})-
\Xi_k(\cocycle N{\sigma^{jN}\om})\big)\right)
\ge -\eta nN/6.
\end{equation}

\subsection{Combining the inequalities}
At this point, we have (combining inequalities
\eqref{eq:blocksplitnew}, \eqref{eq:goodcomp}, \eqref{eq:xitbepsvsxitb0}
and \eqref{eq:xitildevsxi}),
\begin{equation}\label{eq:blocksplitnew3}
\begin{split}
&\E\Big(\tilde\Xi_k(\cocyclepss{nN}{\oom})\Big)\ge\\
& \E\Big(\Xi_k(\bar B^0)+\Xi_k(G^0) + \Xi_k(\bar B^0)
+\Xi_k(\bar B^0) + \Xi_k(\bar B^0)+\ldots\Big)-E_1-E_2-E_3,
\end{split}
\end{equation}
where $E_3$ comes from the contributions of \eqref{eq:xitbepsvsxitb0} and \eqref{eq:xitildevsxi}.
Then using \eqref{eq:XigeXitildenew},
\begin{align*}
\E&\Xi_k(\cocyclepss{nN}\oom)\\
\ge\,&
\E\big(\Xi_k(\bar B^0)+\Xi_k(G^0)+\Xi_k(\bar B^0)+\Xi_k(\bar B^0)+
\Xi_k(\bar B^0)+\ldots\big) -C_{\ref{lem:trivial}} \\
&-\Big(\tfrac 16\eta nN \Big)- \Big(\tfrac 16(\eta N/\chi) \E n_\textsf{Super}+
C_{\ref{prop:XikvsXiktilde}}\E n_\textsf{Super} -11kn\log\delta\Big)\\
&- \Big(\tfrac{1}{18}(\eta N/\chi)\E n_\bad+\tfrac 16\eta nN\Big),
\end{align*}
where $n_\bad$ and $n_\textsf{Super}$ are the number of filler and super-blocks
respectively in $\cocyclepss{nN}\oom$. By sub-additivity, the first term in parentheses
is at least $\E\Xi_k(\cocycle{nN}\om)$.
We have $\E n_\bad<3\chi n$ and $\E n_\textsf{Super}<\chi n$,
\begin{align*}
\E\Xi_k(\cocyclepss{nN}\oom)\ge
&\E \Xi_k(\cocycle{nN}\om)-C_{\ref{lem:trivial}}-\tfrac 23\eta nN
-C_{\ref{prop:XikvsXiktilde}}\chi n +11kn\log\delta.
\end{align*}
As $\epsilon$ is reduced to 0, $\delta$ does not grow, but $N\to\infty$
so that for sufficiently small $\ep$,
we have
$$
\E\Xi_k(\cocyclepss{nN}\oom)\ge \E \Xi_k(\cocycle{nN}\om)-\eta nN.
$$
Hence we deduce $\Lambda_k(\pertA)\ge \Lambda_k(A)-\eta$, as required.
\end{proof}

\section{Convergence of the Oseledets spaces}\label{S:oselConv}

\begin{proof}[Proof of Theorem~\ref{thm:spaces}]

Let $k=D_i$ be as in the statement of the theorem.
Let us assume, by possibly rescaling the cocycle by a constant, that $\mu_k>0>\mu_{k+1}$.
Let $\delta_0<1$ and
$$
U_\epsilon=\big\{\oom\colon
\angle\big(\topspaceDSpert k{\oom},
\topspaceDSunpert k{\om}\big)>2\delta_0\big\}.
$$
We will show that for every $0<\eta<1$ and every sufficiently small
$\ep>0$, $\bar{\PP}(U_\ep)<\eta$.

Once this is established, convergence in probability  of the Oseledets spaces
$Y_k^\ep(\oom)$ to $Y_k^0(\om)$ follows via the identity
$Y_k^\ep(\oom)= \topspaceDSpert k{\oom} \cap \bottomspaceDSpert {k-1}{\oom}$,
and the fact that $\bottomspaceDSpert {k-1}{\oom}$ coincides with the
orthogonal complement of the top $k$-dimensional Oseledets space of the
adjoint cocycle $(\pertA)^*$, which converges in probability by the same argument.
See~\cite[\S4]{FGTQ-cpam}  for details.

In what follows, we will repeatedly apply Lemma~\ref{lem:goodBlocks},
assuming $\xi<\frac{\eta}{3}, \delta_1<\min \{ \delta_0, \frac{\mu_k \eta}{15k}\}$,
and so the value of $\tau$ provided by Lemma~\ref{lem:goodBlocks} satisfies
 $\tau\leq \del_1\leq \frac{\mu_k \eta}{15k}$.
The corresponding value of $\delta<\delta_0$ provided by
Lemma~\ref{lem:goodBlocks} will also be used in the application of Lemma~\ref{lem:ly}.

Let
$$
W_\epsilon=
\bar\sig^{-2N} U_\ep\cap \bar{G} \cap \bar \sig^{-N}\bar G,
$$
where $N$ depends on $\ep$ as in Lemma~\ref{lem:goodPert}.
For sufficiently small $\ep$, we have
$\bar{\PP}(\bar{G} \cap \bar \sig^{-N}\bar G) \geq 1-\frac{2\eta}{3}$, so that once we show
$\bar{\PP}(W_\ep)< \frac{\eta}{3}$, we will be able to conclude that
$\bar{\PP}(U_\ep)= \bar{\PP}(\sig^{-2N}U_\ep)\leq \bar{\PP}(W_\ep) +
\bar{\PP}(\bar{G}^c \cup \bar \sig^{-N}\bar G^c)<\eta$.

\begin{lem}\label{lem:bad1}
Suppose $\oom \in W_\epsilon$. Then
${\perp}(\topspaceDSpert k{\bar\sig^N\oom},
\bottomspacemat k{\cocycle N{\sig^N\om}})\le \delta$.
\end{lem}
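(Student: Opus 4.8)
The plan is to argue by contradiction: fix $\oom\in W_\epsilon$ and suppose ${\perp}(\topspaceDSpert k{\bar\sig^N\oom},\bottomspacemat k{\cocycle N{\sig^N\om}})>\delta$; I will then contradict the inclusion $\bar\sig^{2N}\oom\in U_\epsilon$ that is built into the definition of $W_\epsilon$. Since $\oom\in W_\epsilon$ in particular forces $\bar\sig^N\oom\in\bar G$, the block $\cocycle N{\sig^N\om}$ is good and $\Delta_N,\dots,\Delta_{2N-1}$ are tame. Hence Lemma~\ref{lem:goodBlocks}\eqref{it:svsep} gives $s_k(\cocycle N{\sig^N\om})>K(\delta)$ and $s_{k+1}(\cocycle N{\sig^N\om})<1$, while Lemma~\ref{lem:goodPert} gives $\|\cocyclepss N{\bar\sig^N\oom}-\cocycle N{\sig^N\om}\|_\op\le 1$. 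Thus $A=\cocycle N{\sig^N\om}$ and $B=\cocyclepss N{\bar\sig^N\oom}$ satisfy all three hypotheses of Lemma~\ref{lem:ly}.

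First I would apply Lemma~\ref{lem:ly}\eqref{it:contr} with $V=\topspaceDSpert k{\bar\sig^N\oom}$: the contradiction hypothesis ${\perp}(V,\bottomspacemat k{\cocycle N{\sig^N\om}})>\delta$ yields $\angle(\cocyclepss N{\bar\sig^N\oom}V,\topspacemat k{\cocycle N{\sig^N\om}})<\delta$. Since the Oseledets decomposition of $\pertA$ is equivariant (for $\epsilon<\epsilon_0$ the family $\topspaceDSpert k{\cdot}$ is the well-defined span of the Oseledets subspaces carrying the $k$ largest exponents), the perturbed block maps the fast space forward, $\cocyclepss N{\bar\sig^N\oom}\topspaceDSpert k{\bar\sig^N\oom}=\topspaceDSpert k{\bar\sig^{2N}\oom}$, so $\angle(\topspaceDSpert k{\bar\sig^{2N}\oom},\topspacemat k{\cocycle N{\sig^N\om}})<\delta$. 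Next I would invoke Lemma~\ref{lem:goodBlocks}\eqref{it:Econt} at the point $\sig^N\om\in G$ with block length $N$, obtaining $\angle(\topspacemat k{\cocycle N{\sig^N\om}},\topspaceDSunpert k{\sig^{2N}\om})<\delta$. The triangle inequality for the metric $\angle$ then gives $\angle(\topspaceDSpert k{\bar\sig^{2N}\oom},\topspaceDSunpert k{\sig^{2N}\om})<2\delta\le 2\delta_0$, which contradicts $\bar\sig^{2N}\oom\in U_\epsilon$. Hence ${\perp}(\topspaceDSpert k{\bar\sig^N\oom},\bottomspacemat k{\cocycle N{\sig^N\om}})\le\delta$.

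I do not expect a real obstacle here; the lemma is a clean application of the good-block estimates. The only points deserving care are (i) checking that ``$\bar\sig^N\oom\in\bar G$'' unpacks to exactly the three hypotheses of Lemma~\ref{lem:ly}, which is immediate from Lemmas~\ref{lem:goodBlocks} and~\ref{lem:goodPert}, and (ii) the equivariance identity $\cocyclepss N{\bar\sig^N\oom}\topspaceDSpert k{\bar\sig^N\oom}=\topspaceDSpert k{\bar\sig^{2N}\oom}$, which holds because $\topspaceDSpert k{\cdot}$ is defined as an equivariant family, using the spectral gap above $\mu_{k+1}^\epsilon$ that $\epsilon<\epsilon_0$ guarantees.
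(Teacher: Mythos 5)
Your proposal is correct and follows essentially the same route as the paper: apply Lemma~\ref{lem:ly}\eqref{it:contr} with $V=\topspaceDSpert k{\bar\sig^N\oom}$, use equivariance of the perturbed fast space, combine with Lemma~\ref{lem:goodBlocks}\eqref{it:Econt} and the triangle inequality to contradict $\bar\sig^{2N}\oom\in U_\ep$ (the paper phrases this as a contrapositive, which is the same argument). Your explicit verification of the hypotheses of Lemma~\ref{lem:ly} and of the equivariance identity are points the paper leaves implicit.
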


\begin{proof}
We prove the contrapositive:
Suppose ${\perp}(\topspaceDSpert k{\bar\sig^N\oom},
\bottomspacemat k{\cocycle N{\sig^N\om}})>\delta$.
Applying Lemma \ref{lem:ly}\eqref{it:contr} (using $\sig^N\oom\in\bar G$
and setting $V=\topspaceDSpert k{\bar\sig^N\oom}$),
we see $\angle(\topspaceDSpert k{\bar\sig^{2N}\oom},
\topspacemat k{\cocycle N{\sig^N\om}})<\delta$.
As $\angle(\topspacemat k{\cocycle N{\sig^N\om}},
\topspaceDSunpert k{\sig^{2N}\om})<\delta$ by Lemma
\ref{lem:goodBlocks}\eqref{it:Econt}, we deduce the bound
$\angle(\topspaceDSpert k{\bar\sig^{2N}\oom},
\topspaceDSunpert k{\cocycle N{\sig^{2N}\om}})<2\delta$,
which contradicts $\oom\in \bar\sig^{-2N}U_\ep$.
\end{proof}

\begin{lem}\label{lem:smallPerp}
If $\oom, \bar\sig^N\oom \in \bar G$ and ${\perp}(\topspaceDSpert k{\bar\sig^N\oom},
\bottomspacemat k{\cocycle N{\sig^N\om}})<\delta$,
then ${\perp}(\topspaceDSpert k\oom,
\bottomspacemat k{\cocyclepss N\oom})<\delta^{-1} e^{-(\mu_k-\tau) N}$.
\end{lem}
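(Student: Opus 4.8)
The plan is to pull the near-intersection back across one block, using that the perturbed block $B:=\cocyclepss{N}{\oom}$ stretches its top $k$ right singular directions by at least $e^{-\del}s_k(\cocycle{N}{\om})\ge e^{-\del}e^{(\mu_k-\tau)N}$. Write $A:=\cocycle{N}{\om}$, $A':=\cocycle{N}{\sig^N\om}$ and $V:=\topspaceDSpert{k}{\oom}$; by equivariance of the Oseledets splitting of the perturbed cocycle (valid for $\ep$ small, when $\mu^\ep_k>\mu^\ep_{k+1}$), $B$ maps $V$ bijectively onto $\topspaceDSpert{k}{\bar\sig^N\oom}$. Since $\oom\in\bar G$, Lemma~\ref{lem:goodPert} gives $\|B-A\|_\op\le 1$, so Lemma~\ref{lem:ly} applies to the pair $(A,B)$; using Lemma~\ref{lem:goodBlocks}\eqref{it:svsep} we obtain $s_k(B)\ge e^{-\del}s_k(A)\ge e^{-\del}e^{(\mu_k-\tau)N}$ and $s_{k+1}(B)\le 2$ (so that $\topspacemat{k}{B}$, $\bottomspacemat{k}{B}$ are well defined for $N$ large), together with $\angle(\topspacemat{k}{B},\topspacemat{k}{A})<\del$.

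First I would record that $\topspacemat{k}{B}$ is transverse to $\bottomspacemat{k}{A'}$. From $\angle(\topspacemat{k}{A},\topspaceDSunpert{k}{\sig^N\om})<\del$ (Lemma~\ref{lem:goodBlocks}\eqref{it:Econt}) we get $\angle(\topspacemat{k}{B},\topspaceDSunpert{k}{\sig^N\om})<2\del$; combining this with $\angle(\bottomspacemat{k}{A'},\bottomspaceDSunpert{k}{\sig^N\om})<\del$ (Lemma~\ref{lem:goodBlocks}\eqref{it:Fcont}) and ${\perp}(\bottomspaceDSunpert{k}{\sig^N\om},\topspaceDSunpert{k}{\sig^N\om})>10\del$ (Lemma~\ref{lem:goodBlocks}\eqref{it:sep} at the point $\sig^N\om$), the reverse triangle inequality for ${\perp}$ gives ${\perp}(\bottomspacemat{k}{A'},\topspacemat{k}{B})>7\del$. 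A short computation from Lemma~\ref{lem:perp} then forces $\|\Pi_{(\topspacemat{k}{B})^\perp}u\|\ge c\del$ for every unit vector $u\in\bottomspacemat{k}{A'}$, where $c$ is an absolute constant with $c>\sqrt 2$ (for instance $c=7/\sqrt{1+49\del^2}$ works when $\del<\tfrac12$).

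Now I would invoke the hypothesis: pick unit vectors $p\in\topspaceDSpert{k}{\bar\sig^N\oom}=B(V)$ and $u\in\bottomspacemat{k}{A'}$ with $\|p-u\|<\sqrt2\,\del$, and let $v\in V$ be the unique vector with $Bv=p$. Split $v=v_0+v_1$ with $v_0\in\bottomspacemat{k}{B}$ and $v_1\in(\bottomspacemat{k}{B})^\perp$; then $Bv_0\in(\topspacemat{k}{B})^\perp$ and $Bv_1\in\topspacemat{k}{B}$ are orthogonal, with $\|Bv_1\|=\|\Pi_{\topspacemat{k}{B}}p\|\le 1$ and $\|Bv_0\|=\|\Pi_{(\topspacemat{k}{B})^\perp}p\|\ge\|\Pi_{(\topspacemat{k}{B})^\perp}u\|-\sqrt2\,\del\ge(c-\sqrt2)\del$. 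Since $\|Bv_1\|\ge s_k(B)\|v_1\|$ and $\|Bv_0\|\le s_{k+1}(B)\|v_0\|\le 2\|v_0\|$, we get $\|v_1\|\le 1/s_k(B)$ and $\|v\|\ge\|v_0\|\ge\tfrac12(c-\sqrt2)\del$. Hence the unit vector $\hat v:=v/\|v\|\in V$ has $\|\Pi_{(\bottomspacemat{k}{B})^\perp}\hat v\|=\|v_1\|/\|v\|\le 2/((c-\sqrt2)\del\,s_k(B))$, and since a unit vector whose component in $(\bottomspacemat{k}{B})^\perp$ has norm $\theta$ lies within $2\theta$ of $\bottomspacemat{k}{B}\cap S$, this yields
\[
{\perp}\big(\topspaceDSpert{k}{\oom},\bottomspacemat{k}{B}\big)\le\frac{2\sqrt2}{(c-\sqrt2)\,\del\,s_k(B)}\le\frac{2\sqrt2\,e^{\del}}{(c-\sqrt2)\,\del}\,e^{-(\mu_k-\tau)N},
\]
which is below $\del^{-1}e^{-(\mu_k-\tau)N}$ once $\del$ is small, as it is in every application of the lemma.

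The hardest part is the third step: ruling out that $p$, a priori known only to lie within $\sqrt2\,\del$ of the unperturbed slow space $\bottomspacemat{k}{A'}$ at time $N$, is nonetheless nearly contained in the perturbed fast image space $\topspacemat{k}{B}$. This is precisely where the Oseledets transversality at time $N$ is used, and because that transversality has size only of order $\del$, one must keep the two $\del$-sized contributions from cancelling each other, which is what pins down the constants (and is the reason the bound carries the harmless factor $\del^{-1}$). The genuine profit, the factor $e^{-(\mu_k-\tau)N}$, then comes essentially for free: it is the reciprocal of the lower bound $s_k(B)\ge e^{-\del}e^{(\mu_k-\tau)N}$ on the $k$th singular value of the perturbed block.
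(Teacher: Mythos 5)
Your proof is correct and is essentially the paper's own argument run in the forward direction: the paper establishes the contrapositive, using the same orthogonal decomposition of a vector of $\topspaceDSpert k\oom$ relative to $\bottomspacemat k{\cocyclepss N\oom}$, the same singular-value bounds $s_k(\cocyclepss N\oom)\ge e^{-\del}e^{(\mu_k-\tau)N}$ and $s_{k+1}(\cocyclepss N\oom)\le 2$ from Lemmas~\ref{lem:ly} and~\ref{lem:goodBlocks}, and the same chain of angle estimates through $\topspaceDSunpert k{\sig^N\om}$ measured against the $10\delta$ transversality budget at $\sig^N\om$. One minor point: your constant $c$ is exactly what the computation in Lemma~\ref{lem:perp} provides and can be taken to be $7$ rather than $7/\sqrt{1+49\delta^2}$, whence the final bound becomes $\frac{2\sqrt2\,e^{\del}}{7-\sqrt2}\,\delta^{-1}e^{-(\mu_k-\tau)N}<\delta^{-1}e^{-(\mu_k-\tau)N}$ for every $\delta<\tfrac12$, so no additional smallness assumption on $\delta$ is required.
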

\begin{proof}
We will show the contrapositive.
Assume ${\perp}(\topspaceDSpert k\oom,
\bottomspacemat k{\cocyclepss N\oom})\ge\delta^{-1} e^{-(\mu_k-\tau) N}$.
Let  $v\in \topspaceDSpert k\oom$ be of length 1.
Let $v=u+w$, with $u \in \bottomspacemat k{\cocyclepss N\oom}$
and $w\in \bottomspacemat k{\cocyclepss N\oom}^\perp$.
Then, $\|w\| \geq \delta^{-1} e^{-(\mu_k-\tau) N}$,
and so $\|\cocyclepss N\oom w \| \geq \delta^{-1}$.
Also, $\|\cocyclepss N\oom u \|\leq 2$ by Lemma \ref{lem:ly}\eqref{it:pertsvs}.
Normalizing $\cocyclepss N\oom v$ and recalling that
$\cocyclepss N\oom \topspaceDSpert k\oom= \topspaceDSpert k{\bar\sig^N\oom}$
and $\cocyclepss N\oom\bottomspacemat k{\cocyclepss N\oom}^\perp=
\topspacemat k{\cocyclepss N\oom}$,
we obtain that a point of $\topspaceDSpert k{\bar\sig^N\oom}\cap S$
is within $4\delta$ of $\topspacemat k{\cocyclepss N{\om}}\cap S$. Hence,
by Lemma \ref{lem:symmcloseness},
$\angle( \topspaceDSpert k{\bar{\sig}^N\oom},
\topspacemat k{\cocyclepss N\oom})  < 4\delta$.

By Lemmas~\ref{lem:ly}\eqref{it:spacecont} and \ref{lem:goodBlocks}\eqref{it:Econt},
$\angle(\topspacemat k{\cocyclepss N\oom} ,  \topspaceDSunpert k{{\sig^N\om}})<  2\delta$.
Hence,  $\angle(\topspaceDSpert k{\bar{\sig}^N\oom} ,
\topspaceDSunpert k{{\sig^N\om}})<  6\delta$.
By Lemma \ref{lem:goodBlocks}\eqref{it:Fcont},
$\angle(\bottomspacemat k{\cocycle N\om},
\bottomspaceDSunpert k{{\sig^N\om}})<  \delta$.
Lemma~\ref{lem:goodBlocks}\eqref{it:sep} ensures that
${\perp}(\topspaceDSunpert k{\sig^N\om}, \bottomspaceDSunpert k{{\sig^N\om}})>10\delta$,
and combining with the above, we conclude that
${\perp}(\topspaceDSpert k{\bar\sig^N\oom},
\bottomspacemat k{\cocycle N{\sig^N\om}})> 3\delta$.
\end{proof}

\begin{lem}\label{lem:upperBdXikGood}
If $\ep$ is sufficiently small that $\delta^{-1}+2<e^{k\tau N}$,
 $\oom\in \bar G$ and ${\perp}(\topspaceDSpert k\oom,
\bottomspacemat k{\cocyclepss N\oom})<\delta^{-1} e^{-(\mu_k-\tau) N}$, we have
$$
\Xi_k(\cocyclepss N\oom\vert_{\topspaceDSpert k\oom})
\le {(\mu_1+\ldots+\mu_{k-1}+ 2k\tau)N}.
$$
\end{lem}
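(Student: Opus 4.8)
The plan is to estimate the $k$-dimensional volume growth of $\cocyclepss N\oom$ on $W:=\topspaceDSpert k\oom$ by passing to $k$-volumes (exterior powers), so that the exponentially small number supplied by the hypothesis is pitted against $s_k(\cocyclepss N\oom)$ — a quantity a good block controls — rather than against $\|\cocyclepss N\oom\|_\op$, which it does not. To set up, write $A=\cocyclepss N\oom$ and $B=\cocycle N\om$. Since $\oom\in\bar G$, the block $B$ enjoys the conclusions of Lemma~\ref{lem:goodBlocks} and $\|A-B\|_\op\le1$ by Lemma~\ref{lem:goodPert}, so Lemma~\ref{lem:ly} applies with $B$ in the role of ``$A$'' there and $A$ in the role of ``$B$'' (its hypotheses $s_k(B)>K(\delta)$, $s_{k+1}(B)\le1$ come from Lemma~\ref{lem:goodBlocks}\eqref{it:svsep}). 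From Lemma~\ref{lem:ly}\eqref{it:pertsvs} together with the good-block bounds on the top singular values of $B$ I would record $s_{k+1}(A)\le2$, $s_k(A)\le e^{\delta}e^{(\mu_k+\tau)N}$, and $s_1(A)\cdots s_{k-1}(A)\le e^{(k-1)\delta}e^{(\mu_1+\dots+\mu_{k-1}+(k-1)\tau)N}$; moreover $s_k(A)\ge e^{-\delta}K(\delta)>2\ge s_{k+1}(A)$, so $\bottomspacemat kA$ is unambiguously defined.

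The decisive step — and the place where a naive argument fails — is a volume inequality. One must not bound $\Xi_k(A|_W)$ by $\Xi_{k-1}(A)$ plus the log-norm of $A$ on its most contracted direction in $W$: that norm is only controlled by $\|A\|_\op=s_1(A)$, which when $\mu_1>\mu_k$ may be of order $e^{\mu_1N}$ and so would swamp the factor $e^{-(\mu_k-\tau)N}$. Instead, writing $F^\perp=\bottomspacemat kA^\perp$ for the span of the top $k$ right singular vectors of $A$, I would use
\[
e^{\Xi_k(A|_W)}\ \le\ |\det(\Pi_{F^\perp}|_W)|\,s_1(A)\cdots s_k(A)\ +\ s_1(A)\cdots s_{k-1}(A)\,s_{k+1}(A),
\]
which is immediate upon passing to $k$th exterior powers: a unit decomposable $k$-vector spanning $\wedge^kW$ splits as a scalar times the top singular vector of $\wedge^kA$ (singular value $s_1\cdots s_k=e^{\Xi_k(A)}$) plus a vector of norm $\le1$ orthogonal to it (on which $\wedge^kA$ has norm at most the second singular value $s_1\cdots s_{k-1}s_{k+1}$), and the scalar is $\det(\Pi_{F^\perp}|_W)$ up to sign; here $s_k(A)>s_{k+1}(A)$ guarantees that the top singular space of $\wedge^kA$ is a line.

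Next I would estimate the overlap and cash in the cancellation. As a product of $k$ singular values of an orthogonal projection restricted to $W$, all $\le1$, $|\det(\Pi_{F^\perp}|_W)|$ is at most the least of them, $\min_{w\in W\cap S}\|\Pi_{F^\perp}w\|$; choosing unit $u_0\in\bottomspacemat kA$, $v_0\in W$ with $\|u_0-v_0\|=\sqrt2\,{\perp}(\topspaceDSpert k\oom,\bottomspacemat kA)$ and using $\Pi_{F^\perp}u_0=0$ gives $|\det(\Pi_{F^\perp}|_W)|\le\sqrt2\,{\perp}(\topspaceDSpert k\oom,\bottomspacemat kA)<\sqrt2\,\delta^{-1}e^{-(\mu_k-\tau)N}$ by the hypothesis of the lemma. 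Feeding this in along with $s_k(A)\le e^{\delta}e^{(\mu_k+\tau)N}$ and $s_{k+1}(A)\le2$, the inequality above becomes
\[
e^{\Xi_k(A|_W)}\ \le\ s_1(A)\cdots s_{k-1}(A)\,\big(\sqrt2\,e^{\delta}\delta^{-1}\,e^{2\tau N}+2\big),
\]
the key point being that $e^{-(\mu_k-\tau)N}$ has been absorbed against $s_k(A)$, leaving only $e^{2\tau N}$. Taking logarithms, inserting the recorded bound on $s_1(A)\cdots s_{k-1}(A)$, using $\delta<\tfrac12$, and absorbing the constant and the $e^{2\tau N}$ factor via the standing hypothesis $\delta^{-1}+2<e^{k\tau N}$ (legitimate for $\ep$, hence $N$, small enough), a routine accounting of the resulting $O(\tau N)$ and $O(1)$ errors — controlled by the way $\tau$ and $\delta$ were chosen relative to $k$ in the parameter selection — yields $\Xi_k(A|_W)\le(\mu_1+\dots+\mu_{k-1}+2k\tau)N$, as required.

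The hard part is exactly the realization in the second step: one cannot discard the (potentially huge) leading singular values $s_1(A),\dots,s_{k-1}(A)$ by an operator-norm estimate, because their product is precisely what cancels between the two sides when $W$ is, as here, nearly tangent to $\bottomspacemat kA$; only by phrasing everything in terms of $k$-volumes is the hypothesis's gain matched against $s_k(A)\le e^{(\mu_k+\tau)N}$ — a quantity that, unlike $s_1(A)$, is kept in check on a good block by Lemma~\ref{lem:goodBlocks}\eqref{it:svsep} and Lemma~\ref{lem:ly}\eqref{it:pertsvs}. A subsidiary technical point is that the argument also requires the good-block upper bounds on the sub-leading singular values $s_1(B),\dots,s_{k-1}(B)$, not only on $s_k(B)$ and $s_{k+1}(B)$.
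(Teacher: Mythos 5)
Your proof is correct and is essentially the paper's argument repackaged in exterior-power language: the paper decomposes a single unit vector $v\in\topspaceDSpert k\oom$ as $f+f^\perp$ with $f\in\bottomspacemat k{\cocyclepss N\oom}$ and $\|f^\perp\|$ bounded by the transversality hypothesis, then applies the triangle inequality to parallelepiped volumes, producing exactly your two terms $\|f^\perp\|e^{\Xi_k(\cocyclepss N\oom)}$ and $2\,e^{\Xi_{k-1}(\cocyclepss N\oom)}$ and absorbing the factor $2+\delta^{-1}$ via the hypothesis $\delta^{-1}+2<e^{k\tau N}$. The ``subsidiary technical point'' you flag (a good-block control of $\Xi_{k-1}$, not just of $s_k$ and $s_{k+1}$) is used equally implicitly in the paper's own proof, so it is not a gap relative to the paper.
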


\begin{proof}
By hypothesis, there exists a unit
length $v\in \topspaceDSpert k\oom$
such that $v=f+f^\perp$, with $f \in \bottomspacemat k{\cocyclepss N\oom},
f^\perp \in \bottomspacemat k{\cocyclepss N\oom} ^\perp$ and
$\| f^\perp\| < \delta^{-1} e^{-(\mu_k-\tau)N}$.

Now, since $\topspaceDSpert k\oom$ is $k$-dimensional,
$\Xi_k(\cocyclepss N\oom\vert_{\topspaceDSpert k\oom})$ is the
logarithm of the volume growth of any $k$-dimensional
parallelepiped in $\topspaceDSpert k\oom$ under $\cocyclepss N\oom$.
Let $v, v_2, \dots, v_k$ be an orthonormal basis
for $\topspaceDSpert k\oom$.
Then,
\begin{align*}
\vol(\cocyclepss N\oom v, \cocyclepss N\oom v_2,\dots, \cocyclepss N\oom v_k) &\leq
\vol(\cocyclepss N\oom f,\cocyclepss N\oom v_2,  \dots, \cocyclepss N\oom v_k) \\
&+
\vol(\cocyclepss N\oom f^\perp,\cocyclepss N\oom v_2, \dots, \cocyclepss N\oom v_k).
\end{align*}

By the choice of $f$, and using Lemma~\ref{lem:ly}\eqref{it:pertsvs},
\begin{align*}
\vol(\cocyclepss N\oom f,\cocyclepss N\oom v_2,  \dots, \cocyclepss N\oom v_k) &\leq
\|\cocyclepss N\oom f\| e^{\Xi_{k-1}(\cocyclepss N\oom)} \\
& \leq 2 e^{(\mu_1+\ldots+\mu_{k-1} + (k-1)\tau)N}.
\end{align*}
Since $\| f^\perp\| < \delta^{-1} e^ {-(\mu_k-\tau)N}$, then
$\vol(\cocyclepss N\oom f^\perp,\cocyclepss N\oom v_2, \dots, \cocyclepss N\oom v_k)\leq
\|f^\perp\| e^{\Xi_k(\cocyclepss N\oom)}
<  \delta^{-1}e^{(\mu_1+\ldots+\mu_{k-1} + k\tau)N}$.
\end{proof}

\begin{lem}\label{lem:upperBdXik}
There exists $\ep_0>0$ and $M\in \N$ such that for every $\ep<\ep_0, N\geq M$
and $B\subset \bar{\Om}$, we have that
\[
\int_B \Xi_k(\cocyclepss N\oom)  d\bar{\PP} < N (\mu_1+\dots +\mu_k) \bar{\PP}(B) + 2 \tau N.
\]
In particular, for all sufficiently small $\ep$, the above holds for $N$
chosen as in Lemma~\ref{lem:goodPert}.
\end{lem}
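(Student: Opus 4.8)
The plan is to reduce the bound for the \emph{perturbed} block $\cocyclepss N\oom$ to one for the \emph{unperturbed} block $\cocycle N\om$ — whose large-$N$ behaviour is well understood — after discarding a set of small measure where this reduction is unavailable. Throughout, $N=N_\ep=\lfloor C_{\ref{lem:goodPert}}|\log\ep|\rfloor$ as in Lemma~\ref{lem:goodPert}; a general $N$ with $\ep$ small relative to $N$ is handled identically.

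I would first record two facts. Since $\mu_k>0$ forces $\Xi_k(\cocycle1\om)$ to be integrable, Kingman's subadditive ergodic theorem gives $\tfrac1n\Xi_k(\cocycle n\om)\to\mu_1+\dots+\mu_k$ in $L^1(\PP)$, and hence $\int\big(\Xi_k(\cocycle N\om)-(\mu_1+\dots+\mu_k)N\big)^+\,d\PP<\tfrac\tau4 N$ for all $N\ge M_1$. Separately, applying Lemmas~\ref{lem:goodBlocks} and~\ref{lem:goodPert} with a parameter $\xi'$ of our choosing, for small $\ep$ there is a set $\bar G\subseteq\bar\Om$ with $\bar\PP(\bar G^c)\le\xi'+o(1)$ on which $\cocycle N\om$ is a good block, $\Delta_0,\dots,\Delta_{N-1}$ are tame, and $\|\cocyclepss N\oom-\cocycle N\om\|_\op\le1$; write $\delta'$ for the associated value of $\delta$.

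On $\bar G$ the hypotheses of Lemma~\ref{lem:ly}\eqref{it:pertsvs} are met ($s_k(\cocycle N\om)>K(\delta')$ and $s_{k+1}(\cocycle N\om)<1$ by Lemma~\ref{lem:goodBlocks}\eqref{it:svsep}, together with $\|\cocyclepss N\oom-\cocycle N\om\|_\op\le1$), so $s_j(\cocyclepss N\oom)\le e^{\delta'}s_j(\cocycle N\om)$ for $j\le k$ and therefore $\Xi_k(\cocyclepss N\oom)\le k\delta'+\Xi_k(\cocycle N\om)$; consequently $\int_{B\cap\bar G}\Xi_k(\cocyclepss N\oom)\,d\bar\PP$ is bounded by $k\delta'+\int\big(\Xi_k(\cocycle N\om)-(\mu_1+\dots+\mu_k)N\big)^+\,d\PP+(\mu_1+\dots+\mu_k)N\,\bar\PP(B)$, which for $N\ge M$ (with $M\ge M_1$ chosen to absorb the constant $k\delta'$) is at most $(\mu_1+\dots+\mu_k)N\,\bar\PP(B)+\tfrac\tau2 N$. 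On the complement I would use the crude estimate $\Xi_k(\cocyclepss N\oom)\le\Xi_k^+(\cocyclepss N\oom)\le k\log^+\|\cocyclepss N\oom\|_\op\le k\sum_{i=0}^{N-1}\phi(\bar\sigma^i\oom)$, where $\phi(\oom)=\log^+(\|A_\om\|_\op+\|\Delta_0\|_\HS)$ is $\bar\PP$-integrable and independent of $\ep<1$; measure preservation then gives $\int_{B\setminus\bar G}\Xi_k(\cocyclepss N\oom)\,d\bar\PP\le k\sum_{i=0}^{N-1}\int_{\bar\sigma^i(\bar G^c)}\phi\,d\bar\PP$, and choosing $\xi'$ small relative to $\tau$ and then $\ep$ small makes $\bar\PP(\bar G^c)$ small enough that each of the $N$ summands is at most $\tfrac\tau{2k}$ by absolute continuity of the integral, so this contribution is at most $\tfrac\tau2 N$. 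Adding the two bounds yields $\int_B\Xi_k(\cocyclepss N\oom)\,d\bar\PP\le(\mu_1+\dots+\mu_k)N\,\bar\PP(B)+\tau N<(\mu_1+\dots+\mu_k)N\,\bar\PP(B)+2\tau N$, and the final sentence of the statement follows since $N_\ep\ge M$ for small $\ep$.

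The step I expect to be the main obstacle — and the reason the lemma is not an immediate corollary of Kingman's theorem applied to $\pertA$ — is that the block length $N=N_\ep$ grows as $\ep\to0$, so the cocycle over which one would like to take a limit is itself varying. The good-block comparison sidesteps this by replacing $\cocyclepss N\oom$ with $\cocycle N\om$ at the sole cost of $\|\cocyclepss N\oom-\cocycle N\om\|_\op\le1$ (valid for this $N=N_\ep$) on a set of nearly full measure, plus an $O(1)$ correction absorbed once $N$ is large; the $N\to\infty$ limit is then invoked only for the unperturbed cocycle, where it is harmless.
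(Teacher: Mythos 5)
Your argument is essentially correct in the regime it actually covers, and it takes a genuinely different route from the paper's. You transfer the Kingman $L^1$ bound from $\cocycle N\om$ to $\cocyclepss N\oom$ via the good-block machinery (Lemmas~\ref{lem:goodBlocks}, \ref{lem:goodPert} and \ref{lem:ly}\eqref{it:pertsvs} give $\Xi_k(\cocyclepss N\oom)\le\Xi_k(\cocycle N\om)+k\delta'$ on a set of nearly full measure) and dispose of the complement with a crude $\log^+$ bound plus absolute continuity of the integral. The paper's proof is softer and uses none of that machinery: it fixes the finitely many block lengths $n\in\{M,\dots,2M-1\}$, lets $\ep\to0$ for each such \emph{fixed} $n$ using domination and the reverse Fatou lemma to pass the unperturbed bound to the perturbed cocycle, and then extends to arbitrary $N\ge M$ by writing $N$ as a sum of lengths in $\{M,\dots,2M-1\}$ and using sub-additivity of $\Xi_k$ together with $\bar\sigma$-invariance of $\bar\PP$. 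What your approach buys is a direct estimate at the scale $N=N_\ep$ that is actually used; what the paper's buys is uniformity in $N$ at no extra cost.

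That uniformity is where your proof has a genuine gap. The lemma asserts the bound for \emph{every} $N\ge M$ and every $\ep<\ep_0$, with $M$ and $\ep_0$ chosen independently; your comparison hinges on $\|\cocyclepss N\oom-\cocycle N\om\|_\op\le1$, which Lemma~\ref{lem:goodPert} supplies only for $N=\lfloor C_{\ref{lem:goodPert}}|\log\ep|\rfloor$ (its proof bounds the difference by roughly $2N\sqrt\ep\,e^{N/(2C)}$, which exceeds $1$ once $N\gg|\log\ep|$), and the probability that all of $\Delta_0,\dots,\Delta_{N-1}$ are tame also degrades as $N$ grows with $\ep$ fixed. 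So the claim that ``a general $N$ \dots is handled identically'' fails precisely when $N$ is large relative to $|\log\ep|$, a regime the statement includes. The repair is the paper's concatenation step: having your bound for block lengths in $\{M,\dots,2M-1\}$ (all of which are $\le N_\ep$ for small $\ep$, so your argument applies), decompose a general $N$ into such blocks and sum, using sub-additivity and measure invariance; this costs nothing beyond the constants you already have. Two minor points: $\mu_k>0$ does not by itself make $\Xi_k(\cocycle1\om)$ integrable --- what the $L^1$ form of Kingman's theorem needs, and what you do have, is $\log^+\|A_\om\|_\op\in L^1$ together with $\mu_1+\dots+\mu_k>-\infty$; and your step $(\mu_1+\dots+\mu_k)\bar\PP(B\cap\bar G)\le(\mu_1+\dots+\mu_k)\bar\PP(B)$ silently uses $\mu_1+\dots+\mu_k\ge0$, which does hold after the rescaling in force but should be said.
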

\begin{proof}
By the $L^1$ convergence in the sub-additive ergodic theorem, there exists
$M>0$ be such that
$\| \Xi_k(\cocycle n\om) - n(\mu_1+\dots + \mu_k)  \|_1\leq n{\tau}$ for every
$n\geq M$. In particular,  for every $n\geq M$ and every $B\subset \bar{\Om}$,
\[
\int_B \Xi_k(\cocycle n\om)  d\bar{\PP} < n(\mu_1+\dots + \mu_k)\bar{\PP}(B)+ n\tau.
\]
Notice that $\Xi_k(\cocyclepss n\oom)\leq k \log^+ \| \cocyclepss n\oom\|_{\op}
\leq k\sum_{j=0}^{n-1}  (\log^+ \| A_{\sig^j\om}\|_{\op} + \ep \|\Delta_j\|_{\op})$,
where we have used the fact that $\log^+(x+y) \leq \log^+ (x) + |y|$.
For a fixed $n$, this shows that the family of functions $g_\ep(\oom)=\Xi_k(\cocyclepss n\oom)$
for $0\le \ep<1$ is dominated, and converges as $\ep\to 0$ to $\Xi_k(\cocycle n\om)$.
Hence, by the reverse Fatou lemma, for sufficiently small $\ep>0$, $n\in \{M, \dots, 2M-1\}$
and every $B\subset \bar{\Om}$,
\[
\int_B \Xi_k(\cocyclepss n\oom)  d\bar{\PP} <
n (\mu_1+\dots +\mu_k) \bar{\PP}(B) + 2 \tau n.
\]
Using sub-additivity of $\Xi_k$, we conclude that for every $N\geq M$,
and every $B\subset \bar{\Om}$,
\[
\int_B \Xi_k(\cocyclepss N\oom)  d\bar{\PP} <
N (\mu_1+\dots +\mu_k) \bar{\PP}(B) + 2\tau N.
\]
\end{proof}

Notice that if $\oom\in W_\ep$, then by Lemmas~\ref{lem:bad1}, \ref{lem:smallPerp} and
\ref{lem:upperBdXikGood} (each lemma establishing the hypothesis of the next one),
then $\Xi_k(\cocyclepss N\oom|_{\topspaceDSpert k\oom})
\le (\mu_1+\ldots+\mu_{k-1}+2k\tau)N$.
Combining this with Lemma \ref{lem:upperBdXik}, we see
\begin{align*}
\mu_1^\ep&+\ldots+\mu_k^\ep=
\lim_{n\to\infty}\frac 1n\int \Xi_k(\cocyclepss n\oom\vert_
{\topspaceDSpert k{\oom}})\,d\bar\PP(\oom)\\
&\le\frac 1N\int_{W_\ep}\Xi_k(\cocyclepss N\oom\vert_{
\topspaceDSpert k{\oom}})\,d\bar\PP(\oom)
+\frac1N\int_{W_\ep^c}\Xi_k(\cocyclepss N\oom)\,d\bar\PP(\oom)\\
&\leq
(\mu_1+\ldots+\mu_{k-1}+ 2k\tau) \bar{\PP}(W_\ep) +
(\mu_1+\dots +\mu_k) \bar{\PP}(W_\ep^c) + 2 \tau.
\end{align*}
Hence,
\[
\mu_k \bar{\PP}(W_\ep) \leq (\mu_1+\ldots+\mu_k) - 
(\mu_1^\ep+\ldots+\mu_k^\ep) + 4k \tau.
\]
In particular, in view of the convergence of the exponents,
for all sufficiently small $\epsilon$, we have $\bar\PP(W_\ep)
\le 5k\tau/\mu_k<\frac{\eta}{3}$.
\end{proof}

\section*{Acknowledgements}
GF and AQ acknowledge partial support from the Australian Research Council 
(DP150100017). The research of CGT has been supported by an 
ARC DECRA (DE160100147). AQ acknowledges the support of NSERC.
The authors are grateful to the the School of Mathematics and Statistics 
at the University of New South Wales, the School of Mathematics and 
Physics at the University of Queensland and the Department of Mathematics and 
Statistics at the University of Victoria for their hospitality, allowing for research 
collaborations which led to this project.
\bibliographystyle{abbrv}
\bibliography{volume_refs}
\end{document}